\renewcommand{\S}  {\bar{S}}
\newcommand{\I}  {\bar{I}}
\newcommand{\rR} {\bar{R}}
\numberwithin{equation}{section}
 \newtheorem{assumption}{Assumption}[section]
\newtheorem{lemma}{Lemma}[section]
\newtheorem{theorem}{Theorem}[section]
\newtheorem{coro}{Corollary}[section]
\newtheorem{prop}{Proposition}[section]
\newtheorem{remark}{Remark}[section]
\newlength{\defbaselineskip}
\newcommand{\setlinespacing}[1]%
           {\setlength{\baselineskip}{#1 \defbaselineskip}}
\newcommand{\RR}{{\mathbb R}}
\newcommand{\NN}{{\mathbb N}}
\def\E{\mathbb{E}}
\def\P{\mathbb{P}}
\newcommand{\sF}{{\mathcal{F}}}
\newcommand{\Var}{\text{\rm Var}}
\newcommand{\Cov}{\text{\rm Cov}}
\newcommand{\R}  {\mathbb{R}}
\newcommand{\N}  {\mathbb{N}}
\newcommand{\mfI}{\mathfrak{I}}
\newcommand{\mfi}{\mathfrak{i}}
\newcommand{\bD}{{\mathbf D}}
\newcommand{\bS}{{\mathbf S}}
\newcommand{\bE}{{\mathbf E}}
\newcommand{\bC}{{\mathbf C}}
\newcommand{\bR}{{\mathbf R}}
\newcommand{\bV}{{\mathbf V}}
\newcommand{\bI}{{\mathbf I}}
\newcommand{\bU}{{\mathbf U}}
\newcommand{\bone}{{\mathbf 1}}
\newcommand{\qforq}{\quad\mbox{for}\quad}
\newcommand{\qasq}{\quad\mbox{as}\quad}
\newcommand{\qinq}{\quad\mbox{in}\quad}
\newcommand{\non}{\nonumber}
\newcommand{\RA}{\Rightarrow}
\newcommand{\ttl}{\Large Recent Advances in Epidemic Modeling: Non--Markov Stochastic \\[5pt] Models and their Scaling Limits}
\begin{document}
	
	\title[]{\ttl}
	
	\author[Rapha\"el \ Forien]{Rapha\"el Forien}
	\address{INRAE, Centre INRAE PACA, Domaine St-Paul - Site Agroparc
		84914 Avignon Cedex
		FRANCE}
	\email{raphael.forien@inrae.fr}
	
\author[Guodong \ Pang]{Guodong Pang}
\address{Department of Computational Applied Mathematics and Operations Research,
George R. Brown College of Engineering,
Rice University, Houston, TX~~77005}
\email{gdpang@rice.edu}
	
	\author[{\'E}tienne \ Pardoux]{{\'E}tienne Pardoux}
	\address{Aix--Marseille Universit{\'e}, CNRS, Centrale Marseille, I2M, UMR \ 7373 \ 13453\ Marseille, France}
	\email{etienne.pardoux@univ.amu.fr}

	
	\newcommand{\raph}[1]{\textcolor{magenta}{#1}}
	
	\maketitle
	
	\allowdisplaybreaks

		\begin{abstract}
	In this survey paper, we review the recent advances in individual based non--Markovian epidemic models. They include epidemic models with a constant infectivity rate, varying infectivity rate or infection-age dependent infectivity, infection-age dependent recovery rate (or equivalently, general law of infectious period), as well as varying susceptibility/immunity. 
	We focus on the scaling limits with a large population,  functional law of large numbers (FLLN) and functional central limit theorems (FCLT), while the large and moderate deviations for some Markovian epidemic models are also reviewed. In the FLLN, the limits are a set of Volterra integral equations, and for the models with infection-age dependent infectivity, the limit becomes a PDE coupled with the Volterra integral equations.
	In the FCLT, the limits are stochastic Volterra integral equations driven by Gaussian processes. We relate our deterministic limits to the results in the seminal papers by Kermack and McKendrick  published in 1927, 1932 and 1933, where the varying infectivity and susceptibility/immunity were already considered. 
	We also discuss some extensions, including models with heterogeneous population, spatial models and control problems, as well as open problems. 
	\end{abstract}

	\section{Introduction}
	
	Had this paper been written in 2019 or before, we would have needed, in order to drive the interest of the reader, to recall the major pandemics of the previous centuries: the black plague pandemic which killed between 30\% and 50\% of Europe's population in the 14th century, the plague epidemic which killed almost half of the population of Marseille and a quarter of the population of Provence in 1720, the so--called Spanish flu which killed between 50 and 100 million people, not forgetting HIV/AIDS, malaria and tuberculosis, which together killed more than 3 million humans in 2011. 
	This long list is not without a silver lining, thanks to the huge success of vaccination, which in particular has reduced the number of deaths due to measles by 94\% and has permitted the eradication of smallpox in the late 20th century.

	However, in 2021, everyone has heard about infectious diseases, the basic reproduction number, herd immunity and the importance of vaccination. This is a result of the Covid--19 pandemic, which started at the end of 2019 in China, and by spring 2020 had hit Europe and North America, filling intensive care units in every country. During the spring of 2020, many countries implemented drastic lockdown measures, which were decided after the leaders of those countries had learned the predictions of mathematical models about the number of deaths that the epidemic was likely to cause, if no such measures were taken. One year later, in the spring of 2021, most wealthy countries are striving to vaccinate a large proportion of their population, in the hope that they can get rid of the epidemic, or at least lower the pressure on hospitals to a manageable level. 
	
	The aim of this survey paper is not primarily to present all those notions, or to review all the efforts aimed at modelling this particular pandemic, but rather to highlight some recent progress in epidemic modelling to which the authors of this paper have contributed.

	The use of mathematics and mathematical models as a tool to understand and control the propagation of infectious diseases has a long history. Around 1760, Daniel Bernoulli, a member of a famous family of mathematicians, who had also been trained as a physician, exploited a mathematical model in order to  convince his contemporaries of the advantage of inoculation (the ancestor of vaccination) against smallpox, discussing already the balance between the  benefit of inoculation  and the associated risk, a question which is much debated these days. The foundations of modern epidemic modelling was mainly the result of efforts of physicians, rather than mathematicians. During the second half of the 19--th century, the Russian physician P. D. En'ko was probably the first scientist who created a chain binomial stochastic model of epidemics, similar to the better known Reed--Frost model, which was formulated in 1928, but published only in the 1950's. 
	
	Most modern mathematical epidemic models are formulated as deterministic compartmental models. Around 1910, Ross introduced the concept of the basic reproduction number $R_0$, and argued that malaria would stop if the proportion of mosquitos to humans were maintained under a certain threshold. His arguments,  which were based on the understanding of the large time behaviour of dynamical systems, were not accepted by many of his contemporaries, who claimed that malaria would continue as long as mosquitos would be present. 
	
	The description of the transmission of communicable diseases via compartmental models was pioneered by Kermack and McKendrick in a series of three papers, published in 1927, 1932 and 1933, see \cite{KMK,KMK32,KMK33}. Their models were very refined. In their 1927 paper, they consider infection--age dependent infectivity (\textit{i.e.,} the infectivity of an infectious individual depends upon the time elapsed since he/she was infected), as well as infection--age dependent recovery rate which, as we shall explain below, corresponds to the fact that the duration of the infectious periods can be very general (any absolutely continuous distribution).  
In one section of the paper, they consider the case of constant rates, \textit{i.e.}, constant infectivity, and constant recovery rate, the latter imposing that the law of the duration of the infectious period be an exponential distribution. In this particular case, the deterministic model is a system of ordinary differential equations (ODEs), instead of the more complex system of  Volterra type  integral equations in the general case.
Let us explain what kind of Volterra integral equations will appear in the present paper. Note that an ODE can be written in integral form as
\[ x(t)=y(t)+\int_0^t f(s,x(s))ds,\]
where $x(t)$ is the solution, and $y(t)$ a given forcing term, while $f(t,x)$ is the coefficient of the ODE, which can be written in differential form as
\[ \frac{dx}{dt}(t)= \frac{dy}{dt}(t)+f(t,x(t)),\quad x(0)=y(0)\,.\]
The class of Volterra integral equations which will appear in this paper is of the general form
\[ x(t)=y(t)+\int_0^t f(t,s,x(s))ds,\]
where now the coefficient $f$ depends upon the upper bound of the integral. Hence there is no simple equation for 
$\frac{dx}{dt}(t)$. More importantly, while, given $x(t)$, the solution of the ODE after time $t$ depends only upon the future increments of $y$ after time $t$, this is no longer the case for the solution of the Volterra integral equation, whose solution does not forget its past. Some specific forms of such equations are called delay equations, or equations with memory.
Note that there is a well--established theory for such equations with in particular results of  existence and uniqueness under appropriate conditions on the coefficients, see e.g. \cite{brunner_volterra_2017}.

It is rather clear that the general model considered in \cite{KMK} can be made much more realistic by a proper choice of the coefficients adapted to each particular situation (both to a specific illness and to a specific society with its interactions) than the particular case of constant rates. However, almost all epidemic models which were considered since 1927 treat the special case of ODE models, and when the seminal paper \cite{KMK} is quoted, most of the time reference is made only to the special case of constant rates. 

Furthermore in their second and third papers \cite{KMK32,KMK33},  Kermack and McKendrick considered the loss of immunity (and study the endemic situations), again with a very realistic point of view: they consider that loss of immunity is not sudden, but progressive. It is hard to find a recent work which adopts that point of view (one exception is \cite{inaba2001kermack}). In their 1932 paper \cite{KMK32}, they also pioneer the use of PDE models for the description of epidemic models with infection-age dependent infectivity and recovery rate and recovery-age dependent level of immunity.

The goal of the present survey paper is twofold. First, we want to draw the attention of the readers to the complex models of Kermack and McKendrick, the more classical ODE models being in our opinion a rather unrealistic approximation of the former, which should be used only when both have a similar behaviour. We shall discuss that point below. Second we want to derive the deterministic models (whether ODEs, integral equations or PDEs) as law of large numbers limits, in the asymptotic of a large population,  of individual based stochastic models. 
This can be seen as an analogue of many recent works which establish certain equations of physics as limits of stochastic particle systems, as the number of particles tends to infinity (see in particular the book by Kipnis and Landim \cite{KL}, and the references therein).
We shall also discuss the difference between the stochastic and the deterministic models, via the central limit theorem, moderate and large deviation principles.

\subsection{Literature review}
The Markov models and their limiting ODE models have long been the standard tools to study epidemics, see the recent survey \cite{brittonpardoux} and monographs \cite{anderson1992infectious,andersson2012stochastic,martcheva2015,bjornstad2018,BCF-2019}. 
We refer the readers to the above for the related literature. The functional law of large numbers (FLLN) and functional central limit theorem (FCLT) results for Markov models can be found in \cite{brittonpardoux}, which use the standard Poisson random measure representations and martingale convergence arguments in \cite{ethier-kurtz}. 
Most relevant results concerning the large deviation and moderate deviation principles in Section~\ref{sec:markov} can be found in the recent works  \cite{pardoux2017large,kratz2018large,pardoux2018large,brittonpardoux,pardouxEJP20}. 

Since the seminal work by Kermack and McKendrick \cite{KMK,KMK32,KMK33}, Volterra integral equations have been developed, without proving an FLLN rigorously, in various epidemic models with general infectious periods, see, e.g., \cite{brauer1975nonlinear,cooke1976epidemic,diekmann1977limiting,hethcote1995sis,van2000simple,feng2007epidemiological,BCF-2019}. Also,  PDE models have been developed,  for various epidemic models, Markovian or non-Markovain, see, e.g., 
\cite{thieme1993may,inaba2004mathematical,magal2013two,hoppensteadt1974age,magal2010lyapunov, ZhangPeng2007,Kaplan2020,Gaubert,reinert1995asymptotic,clemenccon2008stochastic,foutel2020individual}. 

This survey does not focus on these various deterministic Volterra integral equations and PDE models, but on individual based infectious models for which such equations arise as scaling limits, in particular, our recent works in \cite{PP-2020,FPP2020a,FPP2020b,PP-2020b,PP-2020c,PP-2021,FPPZ2021}.

Let us also provide a brief literature review on some other existing methods and results on these general models. 
Sellke \cite{sellke1983asymptotic} developed an approach, the so-called ``Sellke construction", to find the distribution of the number of remaining uninfected individuals in an $\bS\bI\bR$/$\bS\bE\bI\bR$ epidemic model with a large population. 
Ball \cite{ball1986unified} developed a unified approach using a Wald's identity to find the distribution of the total size and the total area under the trajectory of infectious individuals.  
Ball \cite{ball1993final} further developed this approach to study multi-type epidemic models. Barbour  \cite{barbour1975duration} proved limit theorems for the distribution of the duration from the first infection to the last removal in a closed epidemic. 
The LLN and CLT results concerning the final number of infected individuals are  presented in \cite{brittonpardoux}. 
Non--Markovian SIR epidemic models were studied as piecewise Markov deterministic processes using the associated martingales in  \cite{clancy2014sir,gomez2017sir} to analyze the distribution of the number of survivors of the epidemic and  the population transmission number and the infection probability of a given susceptible individual.

We now review the existing literature on functional limit theorems for the non-Markovian epidemic models preceding our recent works. 
Wang \cite{wang1975limit,wang1977central,wang1977gaussian} proved FLLN and FCLT for some age and density dependent stochastic population models, including the SIR model which has an infection rate depending on the number of infectious individuals and allows an initial condition with infection-age dependent infectivity. The limits are deterministic or stochastic Volterra integral equations. 
The strategy of the proof in \cite{wang1975limit,wang1977central,wang1977gaussian} is different from ours, does not make use of Poisson random measures, and also assumes a $C^1$ condition on the distribution of the infectious period for the FCLT.  
Some asymptotic properties of the limiting deterministic integral equations were  studied \cite{wang1978asymptotic}. 
Very few of these articles have proved an FLLN  with a PDE limit. These include \cite{reinert1995asymptotic,clemenccon2008stochastic,foutel2020individual} among the papers mentioned above. 
Reinert \cite{reinert1995asymptotic} used Stein's method with a generalized Sellke construction to prove a LLN for the empirical measure describing the system dynamics of the generalized SIR model with the infection rate dependent upon time and state of infection, and a PDE model can be derived from the limit. 
Cl{\'e}mon\c{c}on et al. \cite{clemenccon2008stochastic} proved an FLLN for the measure-valued process in the SIR model with contact-tracing, from which a PDE model is derived (this paper also establishes an FCLT with a SPDE limit). 
 In developing a PDE model to study the Covid-19 pandemic, \cite{foutel2020individual} establishes the PDE model as a a law of large numbers limit of stochastic individual based models.
 Although not particularly studying epidemic models, there have been studies on population dynamics using measure-valued processes which result in PDE limits, for example, \cite{oelschlager1990limit,bose1995diffusion,meleard2012slow}. Their results do not directly apply to epidemic models, but their methods of proving the convergence of measure-valued processes might be useful for our topic.

The models and main results in our recent works are reviewed in this article, which are briefly described in the next subsection. 

\subsection{Organization of the paper}

The organization of the paper is as follows.
In Section~\ref{sec:markov}, we consider the ``special case'' of constant rates, which leads to a Markov individual-based stochastic model, an ODE model in the FLLN and a diffusion model in the FCLT.  We also discuss the large deviation and moderate deviation results in the Markov models. 
In Section~\ref{sec:nonmarkov}, we consider the case of a constant infectivity, but with a general law of the infectious period, or equivalently an infection age dependent recovery rate, in which case the stochastic model is non-Markov, and the deterministic LLN limiting model is a system of integral equations, \textit{i.e.}, a system of equations with memory. 
The stochastic limiting processes in the FCLT are Gaussian-driven Volterra integral equations.  
This section draws on our first paper in the series \cite{PP-2020}. 
We also describe the discrete spatial model, \textit{i.e.,} a multi-patch non-Markov model with constant infectivity, where individuals may migrate from one patch to another and individuals may be infected locally within each patch or from some distance (which can be thought of as the result of infections taking place during short stays of individuals outside of their current patch).  This part draws upon \cite{PP-2020b}.
In Section~\ref{sec:varinfect}, we add the infection--age infectivity. 
At the level of the stochastic model, we assume that the infectivity functions of the various individuals are i.i.d. copies of a given c\`adl\`ag random function. 
It turns out that only the mean of this random function appears in the limiting LLN deterministic model, which, as we shall see, is precisely the model introduced in \cite{KMK}. 
We then present the limiting Gaussian-driven stochastic integral equations obtained in the FCLT. 
We also discuss the use of these models to model the Covid-19 epidemic.  
These results are proved in \cite{FPP2020b,PP-2020c}. 
We shall then discuss the PDE point of view of the same model in Section~\ref{sec:PDE}. 
In addition to the total infectivity process, we use a stochastic process that tracks the number of infected individuals at each time that have been infected for a certain amount of time. 
The LLN limit is again a system of integral equations, while the density of the more detailed process with respect to the elapsed infectious time have a PDE representation that coincides with the equations of \cite{KMK32} if the distribution of the infectious period is absolutely continuous. In addition, we obtain a PDE for the models with a deterministic infectious period. These PDE results are established in \cite{PP-2021}. 
In Section~\ref{sec:varinfect-varimmun} we discuss the case where the infectivity depends upon the age of infection, the duration of the infectious period has a general distribution, and the loss of immunity is a random function of the time elapsed since recovery. Again, those varying immunities of the various individuals are i.i.d. copies of a given random function. It turns out that in this case, the limiting deterministic model involves the whole distribution of this varying immunity function (and not just its expectation), and in general our LLN model is very different from the model introduced in \cite{KMK32}, unless the loss of immunity is described by the same deterministic function of the time elapsed since the random recovery time for all individuals.	 This result is proved in \cite{FPPZ2021}. 
Finally, in Section~\ref{sec:open}, we discuss various extensions, including models with heterogeneous population, spatial models and control problems.   We then discuss open problems.

Starting with Section~\ref{sec:nonmarkov}, this paper describes recent results obtained by the authors, as the output of a research effort which started in January 2020. Needless to say, our program has not yet been completed. In particular, the moderate and large deviations results have so far been obtained only in the Markov case, and little has been done until now on spatial model outside the Markov case. We nevertheless believe that it is now a good time to put together a series of results which both relate stochastic and deterministic results, and insist upon the rich and complex models which Kermack and McKendrick introduced almost a century ago, and have been unfortunately largely neglected and/or forgotten.

 Let us add some comments on the present paper. There are essentially two large classes of epidemic models.
\begin{enumerate}
\item Those models where the number of susceptible individuals only decreases during the epidemic. The individuals who get infected recover sooner or later from the illness, and they have a permanent immunity. Moreover, there is no birth or immigration of new susceptibles. These are the $\bS$ $\bI$ $\bR$ and 
$\bS$ $\bE$ $\bI$ $\bR$ models without demography. In those models, the epidemic clearly cannot last forever.
\item Those models with a permanent flux of new susceptibles, either by birth or immigration, or through the fact that individuals who recover from the illness loose their immunity after some time, and become susceptible again. In that case, under certain conditions the epidemic can in principle last for ever. This is what is called an endemic situation. 
\end{enumerate}
Sections \ref{sec:markov} to \ref{sec:PDE} essentially treat $\bS$ $\bI$ $\bR$ models, with no flux of susceptibles, except for sections \ref{M-endemic}, \ref{M-LD} and \ref{sec:SIS-PDE}.

Section~\ref{sec:varinfect-varimmun} presents our latest results on a $\bS$ $\bI$ $\bR$ $\bS$ type of model where, following Kermack and McKendrick, both the infectivity and the susceptibility depend upon the age of the last infection. The type of model considered in Section~\ref{sec:varinfect-varimmun} is very different from all other models in this paper, and is in fact completely new in the literature of epidemic models.

As already indicated, Section~\ref{sec:markov} studies Markov models and their LLN ODE limits. Sections~\ref{sec:nonmarkov} to~\ref{sec:PDE} present our recent results on 
$\bS$ $\bI$ $\bR$ non--Markov models, and their limits. Sections~\ref{sec:nonmarkov} presents non--Markov models with fixed infectivity, but during a duration which is not assumed to be exponential (this is why the model is non--Markov), while Section~\ref{sec:varinfect} presents the more general situation where the infectivity depends upon the age of infection. Those two sections discuss the convergence to Volterra integral equations. In Section~\ref{sec:PDE}, we present a different point of view, where the limit is a first order partial differential equation. The intuition behind this is that the non--Markov model can be made Markov in infinite dimension, and the limit, instead of being an integral equation with memory, becomes a PDE (i.e., an equation without memory, but in infinite dimension). We have separated the integral equation and the PDE approaches, in order to be more understandable. We have devoted more space to the integral equation approach than to the PDE approach. This is a matter of taste. In the recent years, we have devoted more effort to the integral equation point of view than to the PDE point of view. Other authors prefer the PDE approach. The reader can make his/her own choice.

\subsection{Basic vocabulary of epidemic models}

\paragraph{\bf Compartments}
In a compartmental model, each individual belongs to one of the following compartments:
\begin{itemize}
	\item[-] $\bS$ denotes the compartment of susceptible individuals: those who are not infected, but are susceptible to the disease, which means that they might get infected if they meet an infectious individual.
	
	\item[-] $\bE$ denotes the compartment of exposed individuals, who are infected, but not yet infectious. 
	
	\item[-] $\bI$ denotes the compartment of infectious individuals, who are infected, and able to transmit the disease to susceptible individuals.  Note that when considering the models with varying infectivity, we shall denote by $\bI$ the compartment of infected individuals, whether they are exposed or infectious. Their infectivity is $\ge0$, they are infectious when it is $>0$.
	
	\item[-] $\bR$ denotes the compartment of removed or recovered individuals. Those who have been infected, and have recovered from the disease. They are neither infected, nor susceptible, and are immune to the disease. Often one includes in that compartment those who died from the disease. In the model with varying immunity/susceptibility which we shall consider in Section~\ref{sec:varinfect-varimmun}, we will merge the $\bS$ and $\bR$ compartments into the $\bS$ compartment, each individual having a varying susceptibility. Whenever that susceptibility is $0$, he/she cannot be infected, as when he/she is in the $\bR$ compartment.
\end{itemize}

The above compartments are the most commonly used ones, but certain models consider other compartments,  e.g., $\bV$ for vaccinated. Also, in particular concerning the COVID--19, some authors have decomposed the $\bI$ compartment into several ones, distinguishing, e.g., the symptomatic and the asymptomatic infectious individuals, creating a compartment for those who are hospitalized, and another one for those in Intensive Care Units.
\bigskip

\paragraph{\bf Various types of models}

The most classical model is the $\bS\bI\bR$ model. 
In this model, when a susceptible individual is infected, he/she leaves the $\bS$ compartment and enters the $\bI$ compartment. 
While in that compartment, he/she is infectious and can infect susceptible individuals. 
After some time, the infectious individual recovers, moves to the $\bR$ compartment, and stays there for ever. 
This means that he/she is immune and cannot be infected a second time.  

A variant of the $\bS\bI\bR$ model is the $\bS\bE\bI\bR$ model, in which infected individual first enter the $\bE$ compartment. While in that compartment, the individual is not infectious. He/she becomes infectious when entering the $\bI$ compartment. As in the $\bS\bI\bR$ model, the individual eventually 
recovers when entering the $\bR$ compartment.

Next we have the models where the recovered individuals lose their immunity after some time, and become susceptible again. 
The simplest of those is the $\bS\bI\bS$ (or the $\bS\bE\bI\bS$) model, where 
upon recovery the individual becomes susceptible again, \textit{i.e.,} there is no period of immunity.
Another class of such models is the $\bS\bI\bR\bS$ (or the $\bS\bE\bI\bR\bS$) model, where upon recovery the individual first stays for some time in the $\bR$ compartment, where he/she is immune and cannot be infected. 
Later he/she loses immunity, and becomes susceptible again.

In the simplest models, the size of the population is fixed. This makes sense if the epidemic is considered over a time interval during which there are not many births and deaths, and the deaths due to the epidemic are possibly included in the $\bR$ compartment. Quite a few models however include the demography, and the total size of the population is allowed to fluctuate. The latter are called $ \bS(\bE)\bI\bR(\bS) $ models with demography.

As already explained, contrary to the traditional approach, if we consider an infection--age dependent infectivity,   
we need not distinguish the compartments $\bE$ and $\bI$ (while in $\bE$, the infectivity is zero), and if we consider a recovery age dependent susceptibility, we need not distinguish the compartments $\bR$ and $\bS$ (while in $\bR$, the susceptibility is zero). We shall also see that the non--Markov models / models with memory allow to have a precise description of the propagation of the disease, without increasing the number of compartments, as is commonly done with Markov / ODE models.
\bigskip

\paragraph{\bf $\bR_0$.} A fundamental concept of infectious disease modeling is the so--called {\it basic reproduction number}, denoted $R_0$, which is the mean number of susceptible individuals whom an infectious individual infects during its infectious period, at the beginning of the epidemic (i.e., while essentially all members of the population are susceptible). 

Note that when a significant fraction of the population has been hit by the disease, the mean number of susceptible individuals whom an infectious individual infects during its infectious period will be different, and is sometimes called the effective reproduction number, which depends upon time, since it depends upon the evolution of the epidemic.
More precisely, if $\bar{S}(t)$ denotes the proportion of susceptible individuals in the population, 
$R_{eff}(t)=R_0\times \bar{S}(t)$. If $R_{eff}(t)\le1$, the epidemic regresses and eventually goes extinct. For that to occur, we need to have $\bar{S}(t)\le R_0^{-1}$, i.e., the proportion of immune individuals should be greater than $1-R_0^{-1}$. In such a situation, one says that {\it herd immunity} has been achieved.

\section{Markov and ODE models}\label{sec:markov}

In this section, we shall discuss stochastic Markov models, and their limiting ODE models. In this case, the proof of the LLN is rather easy, and has been known for a long time. Also, the fluctuations of the stochastic model around its law of large numbers limit has been fully studied. Not only do we have a Central Limit Theorem, but also moderate and large deviations have been studied in this simpler case. 

\subsection{The $\bS\bI\bR$ Markov model and its LLN limit}\label{subsec:SIRLLN}

We shall follow the recent presentation in Section 2 of \cite{brittonpardoux} (see also the original paper by Kurtz \cite{Kurtz70}, Barbour \cite{barbour1974functional}
and the book \cite{andersson2012stochastic}). 
Let us first describe the stochastic individual based model. 
We shall use the notions of a Poisson process and a Poisson Random Measure, two notions which are introduced in Subsection~\ref{subsec:Poisson} of the Appendix below.

Suppose that we have a population of fixed size $N$, which is distributed in the three compartments $\bS$, $\bI$ and $\bR$. Let $S^N(t)$ (resp. $I^N(t)$, resp. $R^N(t)$) denote the number of susceptible (resp. infectious, resp. recovered) individuals at time $t$. We have $S^N(t)+I^N(t)+R^N(t)=N$. We assume that each infectious individual meets others at rate $\beta$. If the encountered individual is susceptible, which at time $t$ happens with probability $S^N(t)/N$ (since we make the homogeneity assumption that the individual who is met is chosen uniformly in the population), then the encounter results in a new infection with probability $p$. Hence, if we use the notation $\lambda=\beta\times p$, the rate at which one particular infectious individual infects susceptibles is  $\lambda S^N(t)/N$, and the total rate of new infections in the population at time $t$ is
\[ \Upsilon^N(t)=\lambda I^N(t)\frac{S^N(t)}{N}\,.\]
This means that the number of new infections on the time interval $[0,t]$ takes the form
\[ P_{inf}\left(\int_0^t \Upsilon^N(s)ds\right),\]
where $P_{inf}(t)$ is a standard Poisson process. 
The fact that this process at time $t$ involves $P_{inf}$ up to a random time creates sometimes technical problem. Therefore we shall use in further sections an alternative description which we now introduce. 
Given a standard Poisson random measure $Q$ on $\R_+^2$, an alternative equivalent description of the counting process of infections is
\[ \int_0^t\int_0^\infty{\bf1}_{u\le  \Upsilon^N(s^-)}Q(ds,du)\,. \]

The crucial point now is that in the Markov model considered in this section, we assume that the duration of the infection period is $\mathcal{E}\text{xp}(\gamma)$ (the durations for various individuals are independent, and independent of the rest of the population), where $\gamma^{-1}$ is the mean duration of this infectious period. The end of the infectious period of a given individual is thus the first jump time of a rate $\gamma$ Poisson process.  Since the sum of mutually independent Poisson processes is a Poisson process with rate the sum of the rates, the counting process of the number of recoveries on the interval $[0,t]$ (\textit{\textit{i.e.}}, the number of jumps from the $\bI$ to the $\bR$ compartment) is
\[ P_{rec}\left(\gamma\int_0^t I^N(s)ds\right)\,,\]
where $P_{rec}(t)$ is a standard Poisson process, independent of $P_{inf}(t)$.
Finally, we obtain the following system of stochastic differential equations for the evolution of the numbers of susceptible, infectious and recovered individuals: 
\begin{equation}\label{eq:MSIR}
\left\{
\begin{aligned}
S^N(t)&=S^N(0)-P_{inf}\left(\int_0^t \Upsilon^N(s)ds\right),\\
I^N(t)&=I^N(0)+P_{inf}\left(\int_0^t \Upsilon^N(s)ds\right)-P_{rec}\left(\gamma\int_0^t I^N(s)ds\right),\\
R^N(t)&=R^N(0)+P_{rec}\left(\gamma\int_0^t I^N(s)ds\right)\,.
\end{aligned}
\right.
\end{equation}

\begin{remark}{\bf Value of $R_0$}
It is easy to compute $R_0$ in the present model. ``At the beginning of the epidemic'', means ``while $N^{-1}S^N(t)=1$''.  In that case, each infectious individual infects a mean $\lambda$ susceptible individuals per time unit. The mean duration of the infectious period is $1/\gamma$. Hence $R_0=\lambda/\gamma$.
\end{remark}

We next define $(\bar{S}^N(t),\bar{I}^N(t),\bar{R}^N(t)):=N^{-1}(S^N(t),I^N(t),R^N(t))$. We need to formulate one assumption.

\begin{assumption} \label{AS-SIR-0} We assume that as $N\to\infty$,
\[ (\bar{S}^N(0),\bar{I}^N(0),\bar{R}^N(0))\to(\bar{S}(0),\bar{I}(0),\bar{R}(0))\quad\text{in probability,}\]
where $(\bar{S}(0),\bar{I}(0),\bar{R}(0))\in[0,1]^3$ is such that $\bar{S}(0)+\bar{I}(0)+\bar{R}(0)=1$, 
$\bar{S}(0)>0$ and $\bar{I}(0)>0$.
\end{assumption} 

\begin{remark} If either $\bar{S}(0)=0$ or $\bar{I}(0)=0$, there would be no epidemic in the limiting LLN deterministic model.
Typically an epidemic starts with a small number of initially infectious individuals, which is not of the order of $N$. The description of the first phase of the epidemic, until the number of infectious reaches a ``positive fraction of $N$'' must be done by a stochastic model, the deterministic model becomes valid once a significant fraction of the total population is infectious. The stochastic model at the start of the epidemic can be well approximated by a branching process. We shall explain this below in Section~\ref{sec:varinfect}.
\end{remark}

In the next LLN result, we shall denote by $\bD$ the Skorokhod space of c\`adl\`ag real-valued functions defined on $ \R_+ $, endowed with the Skorokhod $J_1$ topology. The reader is referred to Subsection~\ref{subsec:D} in the Appendix below for its definition and properties.
\begin{theorem}\label{th:MSIR-LLN}
Under Assumption~\ref{AS-SIR-0}, 
\[ (\bar{S}^N,\bar{I}^N,\bar{R}^N)\to(\bar{S},\bar{I},\bar{R}) \qinq \bD^3 \qasq N\to\infty\]
in probability, where $(\bar{S},\bar{I},\bar{R})$ is the unique solution of the system of ODEs
\begin{equation}\label{eq:SIRODE}
\left\{
\begin{aligned}
\frac{d \bar{S}(t)}{dt}&=-\lambda \bar{S}(t)\bar{I}(t),\\
\frac{d \bar{I}(t)}{dt}&=\lambda \bar{S}(t)\bar{I}(t)-\gamma \bar{I}(t),\\
\frac{d \bar{R}(t)}{dt}&=\gamma \bar{I}(t),
\end{aligned}
\right.
\end{equation}
with the initial condition specified by Assumption~\ref{AS-SIR-0}.
\end{theorem}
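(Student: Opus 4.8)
The plan is to establish the functional law of large numbers by the classical martingale method for density-dependent Markov chains, following the Poisson random measure representation given in \eqref{eq:MSIR}. First I would rewrite the centered and scaled processes. For a standard Poisson process $P$, write $\widetilde P(t) = P(t) - t$ for the compensated (martingale) version. Dividing \eqref{eq:MSIR} by $N$ and using that $\Upsilon^N(s) = \lambda I^N(s) S^N(s)/N = N \lambda \bar S^N(s)\bar I^N(s)$, I would express
\begin{equation*}
\bar S^N(t) = \bar S^N(0) - \int_0^t \lambda \bar S^N(s)\bar I^N(s)\,ds - \frac{1}{N}\widetilde P_{inf}\!\left(N\!\int_0^t \lambda \bar S^N(s)\bar I^N(s)\,ds\right),
\end{equation*}
and the analogous identities for $\bar I^N$ and $\bar R^N$, where the recovery term contributes $\gamma \bar I^N(s)$ in the drift and a corresponding compensated-Poisson fluctuation term. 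This decomposes each coordinate into a deterministic-looking integral driven by the empirical process plus a martingale residual scaled by $1/N$.

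Second, I would control the martingale residuals. The key quantitative input is the functional strong law of large numbers for the Poisson process: $\sup_{s \le T}|N^{-1}\widetilde P(Ns)| \to 0$ almost surely (equivalently in probability), uniformly on compacts, which follows from the standard LLN for $P(u)/u$ together with a time-change and the fact that the arguments $N\int_0^t \lambda \bar S^N(s)\bar I^N(s)\,ds$ stay bounded on $[0,T]$ because all the scaled quantities lie in $[0,1]$. Hence both fluctuation terms converge to zero uniformly on $[0,T]$ in probability. After discarding them, the processes $(\bar S^N, \bar I^N, \bar R^N)$ satisfy an integral system that is an asymptotically small perturbation of the fixed-point equation defining \eqref{eq:SIRODE}.

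Third, I would pass to the limit. The right-hand sides of \eqref{eq:SIRODE} involve the map $(x,y,z)\mapsto(-\lambda xy,\ \lambda xy - \gamma y,\ \gamma y)$, which is locally Lipschitz and hence globally Lipschitz on the compact invariant set $[0,1]^3$; by Picard--Lindel\"of this guarantees a unique solution $(\bar S,\bar I,\bar R)$, settling the uniqueness claim. For convergence I would use a Gronwall argument: subtracting the limiting integral equations from the perturbed prelimit equations, bounding the difference of the drifts by the Lipschitz constant, and absorbing the vanishing martingale terms, one obtains
\begin{equation*}
\sup_{s\le t}\big\|(\bar S^N,\bar I^N,\bar R^N)(s) - (\bar S,\bar I,\bar R)(s)\big\| \le \varepsilon_N + C\int_0^t \sup_{r\le s}\big\|(\bar S^N,\bar I^N,\bar R^N)(r) - (\bar S,\bar I,\bar R)(r)\big\|\,ds,
\end{equation*}
where $\varepsilon_N \to 0$ in probability collects the initial-condition error from Assumption~\ref{AS-SIR-0} and the martingale residuals. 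Gronwall's inequality then yields uniform convergence on $[0,T]$ in probability, which upgrades to convergence in $\bD^3$ since the limit is continuous.

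The main obstacle, though a mild one in this Markov setting, is handling the random time-change in the Poisson argument cleanly: the fluctuation term evaluates $\widetilde P_{inf}$ at the state-dependent random time $N\int_0^t \lambda \bar S^N \bar I^N\,ds$ rather than at a deterministic time, so the uniform LLN for the Poisson process must be applied together with the uniform boundedness of these random arguments (guaranteed here by $\bar S^N, \bar I^N \in [0,1]$). This is exactly the ``technical problem'' alluded to after the display defining $\Upsilon^N$, and it is precisely why the global Lipschitz property on the compact set $[0,1]^3$ is what makes the Gronwall closure work without any localization or stopping-time argument.
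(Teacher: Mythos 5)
Your proposal is correct and follows essentially the same route as the paper: the same decomposition of the scaled equations into a drift term plus compensated-Poisson martingale residuals, the same uniform-on-compacts vanishing of those residuals (the paper obtains the uniformity via Dini's second theorem, you via boundedness of the random time-changes, which amounts to the same thing), and the same passage to the limit, for which the paper defers to a Gronwall-type argument in the cited reference exactly as you spell out.
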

Note that $(\bar{S}(t),\bar{I}(t),\bar{R}(t))$ are, respectively, the proportions of susceptible, infectious and recovered individuals in the limit as $N\to\infty$.
\begin{proof}
Let us consider the proportions in the three compartments, \textit{\textit{i.e.,}} we divide equation~\eqref{eq:MSIR} by $N$, and define $M_{inf}(t):=P_{inf}(t)-t$, 
$M_{rec}(t):=P_{rec}(t)-t$. We obtain, with the notation $\bar{\Upsilon}^N(s)=N^{-1}\Upsilon^N(s)$,
\begin{equation}\label{eq:MSIR/N}
\left\{
\begin{aligned}
\bar{S}^N(t)&=\bar{S}^N(0)-\int_0^t\bar{\Upsilon}^N(s) ds-N^{-1}M_{inf}\left({N}\int_0^t \bar{\Upsilon}^N(s)ds\right),\\
I^N(t)&=I^N(0)+ \int_0^t\bar{\Upsilon}^N(s) ds-\gamma\int_0^t\bar{I}^N(s)ds\\
&\quad+N^{-1}M_{inf}\left({N}\int_0^t \bar{\Upsilon}^N(s)ds\right)-N^{-1}M_{rec}\left(\gamma N\int_0^t \bar{I}^N(s)ds\right),\\
R^N(t)&=R^N(0)+\gamma\int_0^t\bar{I}^N(s)ds+N^{-1}M_{rec}\left(\gamma N\int_0^t \bar{I}^N(s)ds\right)\,.
\end{aligned}
\right.
\end{equation}
It is not hard to show that $N^{-1}M_{inf}(Nt)\to0$ and $N^{-1}M_{rec}(Nt)\to0$ a.s., uniformly for $t\in[0,T]$.
Indeed, the pointwise convergence follows directly from the classical law of large numbers, and then the uniform convergence from the fact that the function $t\mapsto N^{-1}P(Nt)$ is increasing, and converges to the continuous function $t\mapsto t$, thanks to the second Dini Theorem. This, combined with Assumption~\ref{AS-SIR-0}, allows one to take the limit in \eqref{eq:MSIR/N}, and deduce \eqref{eq:SIRODE}. More details can be found in Section 2.2 of
\cite{brittonpardoux}.
\end{proof}

\subsection{The Markov $\bS\bI\bR$  model : Central Limit Theorem}

Let us now rescale the differences between the proportions in the $N$ model and the limiting proportions. We define 
\[ (\hat{S}^N(t),\hat{I}^N(t),\hat{R}^N(t)):=\sqrt{N}(\bar{S}^N(t)-\bar{S}(t),\bar{I}^N(t)-\bar{I}(t),\bar{R}^N(t)-\bar{R}(t))\,.\]

In order to obtain a limit of the above processes, we need to formulate an assumption concerning 
$(\hat{S}^N(0),\hat{I}^N(0),\hat{R}^N(0))$.

\begin{assumption}\label{AS-SIR-02}
There exists a random vector $(\hat{S}(0),\hat{I}(0),\hat{R}(0))$ such that
\[ (\hat{S}^N(0),\hat{I}^N(0),\hat{R}^N(0))\Rightarrow(\hat{S}(0),\hat{I}(0),\hat{R}(0)) \qinq \RR^3 \qasq N \to \infty\,.\]
\end{assumption}

We have the following FCLT.
\begin{theorem}\label{th:MSIR-CLT}
Under Assumption~\ref{AS-SIR-02}, 
\[ (\hat{S}^N,\hat{I}^N,\hat{R}^N)\Rightarrow(\hat{S},\hat{I},\hat{R})\qinq \bD^3 \qasq N \to \infty,\]
where $(\hat{S}(t),\hat{I}(t),\hat{R}(t))$ is the unique solution of the following linear SDE:
\begin{equation}\label{eq:SIRCLT}
\left\{
\begin{aligned}
\hat{S}(t)&=\hat{S}(0)-\lambda\int_0^t(\bar{S}(s)\hat{I}(s)+\hat{S}(s)\bar{I}(s))ds
-\int_0^t\sqrt{\lambda\bar{S}(s)\bar{I}(s)}dB_{inf}(s),\\
\hat{I}(t)&=\hat{I}(0)+\lambda\int_0^t(\bar{S}(s)\hat{I}(s)+\hat{S}(s)\bar{I}(s))ds-\gamma\int_0^t\hat{I}(s)ds\\&\quad
+\int_0^t\sqrt{\lambda\bar{S}(s)\bar{I}(s)}dB_{inf}(s)-\int_0^t\sqrt{\gamma\bar{I}(s)}dB_{rec}(s),\\
\hat{R}(t)&=\hat{R}(0)+\gamma\int_0^t\hat{I}(s)ds+\int_0^t\sqrt{\gamma\bar{I}(s)}dB_{rec}(s),
\end{aligned}
\right.
\end{equation}
where $B_{inf}(t)$ and $B_{rec}(t)$ are two mutually independent standard Brownian motions, which are globally independent of $(\hat{S}(0),\hat{I}(0),\hat{R}(0))$. If $(\hat{S}(0),\hat{I}(0),\hat{R}(0))$ is Gaussian, then 
$(\hat{S}(t),\hat{I}(t),\hat{R}(t))$ is a Gaussian process.
\end{theorem}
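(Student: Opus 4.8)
The plan is to subtract the limiting ODE \eqref{eq:SIRODE} from the rescaled system \eqref{eq:MSIR/N}, multiply by $\sqrt{N}$, and exhibit the fluctuation vector $\hat{X}^N := (\hat{S}^N,\hat{I}^N,\hat{R}^N)$ as the solution of a \emph{closed linear integral equation} driven by rescaled compensated Poisson noise. Using $\bar\Upsilon^N(s)=\lambda\bar{S}^N(s)\bar{I}^N(s)$ together with the linearization of the quadratic infection term,
\[
\bar{S}^N(s)\bar{I}^N(s)-\bar{S}(s)\bar{I}(s)=\bar{I}^N(s)\bigl(\bar{S}^N(s)-\bar{S}(s)\bigr)+\bar{S}(s)\bigl(\bar{I}^N(s)-\bar{I}(s)\bigr),
\]
the three equations can be collected in the form
\[
\hat{X}^N(t)=\hat{X}^N(0)+\int_0^t A^N(s)\,\hat{X}^N(s)\,ds+\hat{M}^N(t),
\]
where the random coefficient matrix $A^N(s)$ is assembled from the entries $\lambda\bar{I}^N(s)$, $\lambda\bar{S}(s)$ and $\gamma$, and the noise $\hat{M}^N$ collects the two scalar terms $N^{-1/2}M_{inf}\bigl(N\int_0^\cdot\bar\Upsilon^N(s)\,ds\bigr)$ and $N^{-1/2}M_{rec}\bigl(\gamma N\int_0^\cdot\bar{I}^N(s)\,ds\bigr)$, with the signs dictated by \eqref{eq:MSIR/N}.

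\textbf{Noise convergence.} For the driving terms I would invoke the functional CLT (Donsker) for the two \emph{independent} standard Poisson processes, giving $N^{-1/2}M_{inf}(N\,\cdot)\Rightarrow B_{inf}$ and $N^{-1/2}M_{rec}(N\,\cdot)\Rightarrow B_{rec}$ jointly in $\bD$, with $B_{inf},B_{rec}$ independent standard Brownian motions (independence is inherited from that of $P_{inf}$ and $P_{rec}$). Combining this with the LLN (Theorem~\ref{th:MSIR-LLN}), which provides the \emph{uniform} convergence of the time-change clocks $\int_0^t\bar\Upsilon^N(s)\,ds\to\int_0^t\lambda\bar{S}(s)\bar{I}(s)\,ds$ and $\gamma\int_0^t\bar{I}^N(s)\,ds\to\gamma\int_0^t\bar{I}(s)\,ds$, and using continuity of the composition map, I obtain $\hat{M}^N\Rightarrow\hat{M}$, where the components of $\hat M$ are the time-changed Brownian motions $B_{inf}\bigl(\int_0^t\lambda\bar{S}\bar{I}\bigr)$ and $B_{rec}\bigl(\gamma\int_0^t\bar{I}\bigr)$. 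Since the clocks are deterministic, each such time-changed Brownian motion is a continuous Gaussian martingale whose law coincides with that of the It\^o integral $\int_0^t\sqrt{\lambda\bar{S}(s)\bar{I}(s)}\,dB_{inf}(s)$, respectively $\int_0^t\sqrt{\gamma\bar{I}(s)}\,dB_{rec}(s)$, which are exactly the stochastic integrals in \eqref{eq:SIRCLT}; equality in law suffices because the theorem asserts only weak convergence.

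\textbf{Tightness and identification.} Because $\bar{I}^N\to\bar{I}$ uniformly on $[0,T]$ by the LLN, the coefficients converge, $A^N(s)\to A(s)$ uniformly, to the deterministic matrix $A$ of \eqref{eq:SIRCLT}. A Gronwall argument applied to the linear equation controls $\sup_{t\le T}|\hat{X}^N(t)|$ in terms of $|\hat{X}^N(0)|$ and the sup-norm of $\hat{M}^N$, both of which are stochastically bounded (by Assumption~\ref{AS-SIR-02} and the previous step); this yields tightness of $(\hat{X}^N)$ in $\bD^3$. Every subsequential limit $(\hat{X},\hat{M})$ then satisfies $\hat{X}(t)=\hat{X}(0)+\int_0^tA(s)\hat{X}(s)\,ds+\hat{M}(t)$, and since this linear Volterra equation has a unique solution, given by the variation-of-constants formula with the resolvent of $A$, the limit is uniquely characterized and the whole sequence converges to the solution of \eqref{eq:SIRCLT}.

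\textbf{Main obstacle and Gaussianity.} The delicate point is the passage to the limit in the product $A^N(s)\hat{X}^N(s)$: the matrix is random (it contains the prelimit $\bar{I}^N$) and multiplies the very process whose convergence is being established, so a direct continuous-mapping argument on the solution map does not apply. I would resolve this by writing $A^N\hat{X}^N=A\hat{X}^N+(A^N-A)\hat{X}^N$ and showing the correction term is asymptotically negligible, combining the uniform smallness of $A^N-A$ with the a priori bound on $\sup_{t\le T}|\hat{X}^N(t)|$; this couples tightness and identification, which must therefore be carried out together. Finally, when $\hat{X}(0)$ is Gaussian and independent of $(B_{inf},B_{rec})$, the variation-of-constants representation exhibits $\hat{X}(t)$ as a deterministic linear (Wiener) functional of the Gaussian vector $\hat{X}(0)$ and of the Brownian motions, whence $(\hat{S},\hat{I},\hat{R})$ is a Gaussian process.
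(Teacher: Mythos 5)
Your proposal is correct and follows essentially the same route as the paper: subtract the ODE from the rescaled system, multiply by $\sqrt{N}$, linearize the quadratic infection term to get the drift $\lambda(\hat{S}^N\bar{I}^N+\bar{S}\hat{I}^N)$, and drive the limit with the Donsker-type convergence $N^{-1/2}M(N\cdot)\Rightarrow B$ of the compensated Poisson processes composed with the converging time-change clocks (the paper's Lemma~\ref{le:cltPoisson}). What you supply explicitly — the Gronwall tightness bound, the treatment of the random coefficient $A^N$ via the splitting $A\hat{X}^N+(A^N-A)\hat{X}^N$, and the identification of the time-changed Brownian motions with the It\^o integrals in law — is exactly the content the paper defers to the ``standard arguments'' of its reference.
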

Note that the notion of a Brownian motion is defined in Section \ref{sec:BM-WN} in the Appendix below.
\begin{proof}
We take the difference between \eqref{eq:MSIR/N} and \eqref{eq:SIRODE}, and multiply by $\sqrt{N}$. Hence
\begin{equation*}
\left\{
\begin{aligned}
\hat{S}^N(t)&= \hat{S}^N(0)-\lambda\int_0^t(\hat{S}^N(s)\bar{I}^N(s)+\bar{S}(s)\hat{I}^N(s))ds
-N^{-1/2}M_{inf}\left(N\int_0^t \bar{\Upsilon}^N(s)ds\right),\\
\hat{I}^N(t)&=\hat{I}^N(0)+\lambda\int_0^t(\hat{S}^N(s)\bar{I}^N(s)+\bar{S}(s)\hat{I}^N(s))ds
-\gamma\int_0^t\hat{I}^N(s)ds\\&\quad+N^{-1/2}M_{inf}\left(N\int_0^t \bar{\Upsilon}^N(s) ds\right)-N^{-1/2}M_{rec}\left(\gamma N\int_0^t \bar{I}^N(s)ds\right),\\
\hat{R}^N(t)&=\hat{R}^N(0)+\gamma\int_0^t\hat{I}^N(s)ds+N^{-1/2}M_{rec}\left(\gamma N\int_0^t \bar{I}^N(s)ds\right)\,.
\end{aligned}
\right.
\end{equation*}
The result now follows from Theorem~\ref{th:MSIR-LLN}, the next Lemma and rather standard arguments, see for the details Section 2.3 in \cite{brittonpardoux}.
\end{proof}

\begin{lemma}\label{le:cltPoisson}
Let $P(t)$ be a standard Poisson process, $M(t):=P(t)-t$. Then
\[ N^{-1/2}M(N\cdot)\Rightarrow B \qinq \bD \qasq N \to \infty,\]
where $B(t)$ is a standard Brownian motion.
\end{lemma}
\begin{proof}
Note that $\mathcal{M}(t):= N^{-1/2}M(Nt)$ is a square integrable martingale, whose associated increasing process
is given as $\langle\mathcal{M}\rangle_t=t$. Hence tightness in $\bD$ follows readily by the criterion from Proposition~\ref{pro:tightD} in the Appendix below. It thus suffices to show that for any $n\ge1$, any $0=t_0<t_1<\cdots<t_n$,
\[(\mathcal{M}(t_1),\mathcal{M}(t_2),\ldots,\mathcal{M}(t_n)\Rightarrow(B(t_1),B(t_2),\ldots,B(t_n)).\]
By independence of the increments of both $\mathcal{M}$ and $B$, it suffices to show that for any $t>0$,
$\mathcal{M}(t)\Rightarrow B(t)$. This is easily verified by a characteristic function computation. Indeed, for any $u\in\R$,
\begin{align*}
\E\left(\exp\left[iuN^{-1/2}M(Nt)\right]\right)=\exp\left(Nt\left[e^{i\frac{u}{\sqrt{N}}}-1-i\frac{u}{\sqrt{N}}\right]\right)
\to\exp\left(-t\frac{u^2}{2}\right)\,.
\end{align*}
\end{proof}

\begin{remark}
The last lemma is one of the simplest examples of convergence of discontinuous martingales towards Brownian motion. Such results have a long history, see e.g. Theorem 2 in \cite{rebolledo}.
\end{remark}

\subsection {Markovian $\bS\bI\bS$ and $\bS\bI\bR\bS$ models, and $\bS\bI\bR$ model with demography}\label{M-endemic}
In the SIR model, the number of susceptibles who can be infected is limited, and therefore, the epidemic goes soon or later to an end. However, there are several models where there is a constant flux of susceptibles, which allow the establishment of an endemic disease. Let us describe three such models.
\subsubsection{The $\bS\bI\bS$ model}\label{sec:sism}
In this model, contrary to the SIR model, when an infectious individual recovers, he/she becomes susceptible again. There is no immunity. The stochastic model reads:
\begin{equation*}
\left\{
\begin{aligned}
S^N(t)&=S^N(0)-P_{inf}\left(\int_0^t \Upsilon^N(s) ds\right)+P_{rec}\left(\gamma\int_0^t I^N(s)ds\right),\\
I^N(t)&=I^N(0)+P_{inf}\left(\int_0^t \Upsilon^N(s) ds\right)-P_{rec}\left(\gamma\int_0^t I^N(s)ds\right),
\end{aligned}
\right.
\end{equation*}
and the LLN limiting deterministic model reads:
\begin{equation*}
\left\{
\begin{aligned}
\frac{d \bar{S}(t)}{dt}&=-\lambda \bar{S}(t)\bar{I}(t)+\gamma \bar{I}(t),\\
\frac{d \bar{I}(t)}{dt}&=\lambda \bar{S}(t)\bar{I}(t)-\gamma \bar{I}(t)\,.
\end{aligned}
\right.
\end{equation*}
Note that exploiting the identities $S^N(t)+I^N(t)=N$, $\bar{S}(t)+\bar{I}(t)=1$, we can write in fact equations for 
$I^N(t)$ and $\bar{I}(t)$ only, which read:
\begin{align*}
I^N(t)&=I^N(0)+P_{inf}\left(\frac{\lambda}{N}\int_0^t (N-I^N(s))I^N(s)ds\right)-P_{rec}\left(\gamma\int_0^t I^N(s)ds\right),
\end{align*}
and
\begin{align*}
\frac{d \bar{I}(t)}{dt}=\lambda (1-\bar{I}(t))\bar{I}(t)-\gamma \bar{I}(t)\,.
\end{align*}
Again in this model $R_0=\lambda/\gamma$. If $R_0\le1$, the last equation has the unique equilibrium $I^\ast=0$, while if $R_0>1$, this disease--free equilibrium is unstable, and there is a stable endemic equilibrium
$I^\ast=1-\frac{\gamma}{\lambda}=1-R_0^{-1}$.
\subsubsection{The $\bS\bI\bR\bS$ model}
In this model, an individual is first removed (\textit{i.e.,} immune) when he/she recovers, but he/she loses its immunity at a given rate $\rho$. This gives the following stochastic model (which we write for the two quantities $S^N(t)$ and $I^N(t)$:
\begin{equation*}
\left\{
\begin{aligned}
S^N(t)&=S^N(0)-P_{inf}\left(\int_0^t \Upsilon^N(s)ds\right)+P_{loim}\left(\rho\int_0^t(N-S^N(s)-I^N(s)) ds\right),\\
I^N(t)&=I^N(0)+P_{inf}\left(\int_0^t \Upsilon^N(s) ds\right)-P_{rec}\left(\gamma\int_0^t I^N(s)ds\right),
\end{aligned}
\right.
\end{equation*}
where ``loim'' is an abbreviation for ``loss of immunity'',
and the following deterministic model:
\begin{equation*}
\left\{
\begin{aligned}
\frac{d \bar{S}(t)}{dt}&=-\lambda \bar{S}(t)\bar{I}(t)+\rho(1-\bar{S}(t)- \bar{I}(t)),\\
\frac{d \bar{I}(t)}{dt}&=\lambda \bar{S}(t)\bar{I}(t)-\gamma \bar{I}(t)\,.
\end{aligned}
\right.
\end{equation*}
Again $R_0=\lambda/\gamma$, if $R_0\le 1$, the only equilibrium is $(1,0)$, while if $R_0>1$, we have an endemic equilibrium $(\gamma/\lambda,(1-R_0^{-1})(\gamma+\rho)^{-1}\rho)$.
\subsubsection{The $\bS\bI\bR$ model with demography}
In this model, the recovered individuals do not lose their immunity, but births produce a constant flux of susceptible individuals. The stochastic model reads:
\begin{equation*}
\left\{
\begin{aligned}
S^N(t)&=S^N(0)-P_{inf}\left(\int_0^t \Upsilon^N(s) ds\right)+P_{birth}\left(\rho Nt \right)
-P_{death-sus}\left(\mu N\int_0^tS^N(s)ds\right),\\
I^N(t)&=I^N(0)+P_{inf}\left(\int_0^t \Upsilon^N(s) ds\right)-P_{rec}\left(\gamma\int_0^t I^N(s)ds\right)
-P_{death-inf}\left(\mu N\int_0^tI^N(s)ds\right),
\end{aligned}
\right.
\end{equation*}
and the deterministic model reads
\begin{equation*}
\left\{
\begin{aligned}
\frac{d \bar{S}(t)}{dt}&=\mu-\lambda \bar{S}(t)\bar{I}(t)-\mu\bar{S}(t),\\
\frac{d \bar{I}(t)}{dt}&=\lambda \bar{S}(t)\bar{I}(t)-(\gamma+\mu) \bar{I}(t)\,.
\end{aligned}
\right.
\end{equation*}
\begin{remark} We model the birth as a constant flux at rate $\mu N$, instead of $\mu$ times the number of individuals in the population, in order to avoid the pitfall of critical branching processes, which go extinct in finite time a.s. As a result, if the individuals in the $\bR$ compartment die at rate $\mu$ as well, the total population in the stochastic model remains close to $N$. Therefore we approximate the proportion of susceptibles in the population by $S^N(t)/N$.
\end{remark}

This time, $R_0=\frac{\lambda}{\gamma+\mu}$. If $R_0>1$, the endemic equilibrium reads
$((\gamma+\mu)/\lambda, (1-R_0^{-1})(\gamma+\mu)^{-1}\mu)$.

\subsection{Deviations from the law of large numbers and extinction of an endemic disease}\label{M-LD}
In the above three models, the endemic equilibrium is stable whenever $R_0>1$. This means that the LLN deterministic model, starting from a positive value of $\bar{I}(0)$, will never go extinct. However, if we consider the stochastic model, it is easily seen that for any $N\ge1$, disease free states are accessible with positive probability. Since moreover they are absorbing, in the stochastic model the epidemic stops soon or later. We would like to know how  long we have to wait for this to happen.  One approach is to evaluate the time needed for the stochastic system to diverge enough from its deterministic limit, so that $I^N(t)=0$. For that purpose, we shall exploit three tools that Probability theory gives us, in order to estimate the difference between a stochastic process and its law of large numbers limit, namely the Central Limit Theorem, Moderate Deviations and Large Deviations.

Let us first see what the CLT tells us. Consider first the $\bS\bI\bR$ model with demography. In that model, 
at the endemic equilibrium, $\bar{I}^\ast=(1-R_0^{-1})\frac{\mu}{\gamma+\mu}$, where $R_0=\frac{\lambda}{\gamma+\mu}$. It is clear that $\gamma$ is much larger than $\mu$ (in inverse of years, compare 52 to 1/75, since $\gamma^{-1}$ is of the order of 1 week, and $\mu^{-1}$ is of the order of 75 years). Consequently we can 
consider that $R_0\sim\frac{\lambda}{\gamma}$ and $\varepsilon:=\frac{\mu}{\gamma+\mu}\sim\frac{\mu}{\gamma}$. Now the CLT tells us that $I^N(t)$ is approximately Gaussian, with mean $N\varepsilon(1-R_0^{-1})$ and standard deviation$\sqrt{N/R_0}$ (the asymptotic variance of  $\hat{I}(t)$ is close to $R_0^{-1}$, see \cite{brittonpardoux} page 62). If $N$ is such that the standard deviation is at least the mean divided by 3, then it is likely that $I^N(t)$ will hit zero in time of order 1. This leads to the idea of a critical population size $N_c$ given by 
 $N_c\varepsilon(1-R_0^{-1})=3\sqrt{N_c/R_0}$, that is 
 \[ N_c=\frac{9}{\varepsilon^2(1-R_0^{-1})^2R_0}.\]
 In the case of measles, $R_0=15$. With the above approximations for $\mu$ and $\gamma$, we arrive at $N_c$ in the order of a few million.  If $N\le N_c$, then the CLT predicts that extinction should occur in time of order $1$. If $N>N_c$, extinction is likely to happen in time which is large with $N$, and this is predicted by moderate or large deviations, as we shall explain next.  This confirms the empirical observation that, prior to vaccination, measles was 
 continuously endemic in countries like UK, and died out quickly in Iceland (and was then reintroduced by infected visitors).
 
 \begin{remark} Would we consider the $\bS\bI\bS$ model instead of the $\bS\bI\bR$ model with demography,  then we would find a much smaller $N_c$. Indeed, the asymptotic variance in the CLT is about the same, but $\bar{I}^\ast$ is much larger. Indeed, if everyone who recovers becomes susceptible again, we are likely to have a much larger proportion of infectious at equilibrium. In the $\bS\bI\bR$ model with demography, contrary to the situation in the $\bS\bI\bS$ model, those who get infected are infected only once in their life. The ratio between the two 
 $\bar{I}^\ast$'s is the above $\varepsilon$.
 \end{remark}
 
 As explained above, sooner or later the stochastic process $I^N(t)$ will hit zero (and then stay there for ever). The CLT allows us to guess for which population sizes extinction is likely to happen in time of order 1. 
 We will now discuss what large deviations tell us on this problem. In the three above examples, we have an $\R^d$--valued ODE of the form
 \begin{align}\label{limitODE}
 \frac{dz(t)}{dt}=b(z(t)),\quad z(0)=x\,,
 \end{align}
 which is the LLN limit of a sequence of SDEs of the form
 \begin{align}\label{prelimitSDE}
 Z^N(t)=x_N+\sum_{j=1}^k\frac{h_j}{N}P_j\left(N\int_0^t\beta_j(Z^N(s))ds\right),
 \end{align}
 and we have  $b(z)=\sum_{j=1}^k\beta_j(z)h_j$. Recall that $P_1,\ldots,P_k$ are mutually independent standard Poisson processes. Note that if we rewrite the SDE \eqref{prelimitSDE} in the form (with $M_j(t)=P_j(t)-t$)
 \[ Z^N(t)=x_N+\int_0^tb(Z^N(s))ds+\sum_{j=1}^k\frac{h_j}{N}M_j\left(N\int_0^t\beta_j(Z^N(s))ds\right),\]
 we may regard
 the SDE \eqref{prelimitSDE} for large $N$ as a ``small random perturbation of the dynamic system \eqref{limitODE}''. Freidlin and Wentzell have studied such perturbations as an application of large deviations theory, see \cite{freidWentz}.
 
 Let us first state what kind of information large deviations give us, concerning the convergence of $Z^N$ toward $z$. We shall state the results without giving the precise technical conditions under which one can establish them, referring the reader to Section 4.2 of \cite{brittonpardoux} for all the technical details and proofs. Consider the function $\ell:\R^{2d}\mapsto\R$ defined by
 \[ \ell(x,y,\theta)=\langle y,\theta\rangle-\sum_{j=1}^k\beta_j(x)\left(e^{\langle h_j,\theta\rangle}-1\right)\,.\]
 We let
 \[I_T(\phi):=\int_0^T L(\phi(t),\dot{\phi(t)})dt,\quad \text{where } L(x,y)=\sup_{\theta\in\R^d}\ell(x,y,\theta)\,.\]
 One essential property of this functional  is that $I_T\ge0$ and $I_T=0$ iff $\phi$ solves the ODE \eqref{limitODE}.
 One may think of $I_T(\phi)$ as a sort of measure of how much $\phi$ differs from being a solution to \eqref{limitODE}. 
 
 Large deviations theory gives us both (in the statements below, $x_N$ stands for the vector whose $i$--th coordinates 
 is $[Nx^i ]/N$)
 \begin{itemize}
 \item a lower bound:
 for any open subset $O\subset \bD([0,T];\R^d)$, 
 \[ \liminf_N\frac{1}{N}\log\P\left(Z^{N,x_N}\in O\right)\ge-I_{T,x}(O),\]
where $Z^{N,x_N}$ denotes the solution of \eqref{prelimitSDE} starting from $Z^{N,x_N}(0)=x_N$, and
$I_{T,x}(O):=\inf_{\phi\in O,\phi(0)=x}I_T(\phi)$; and 
\item an upper bound:
for any closed subset $F\subset \bD([0,T];\R^d)$,
\[\limsup_N\frac{1}{N}\log\P\left(Z^{N,x_N}\in F\right)\le I_{T,x}(F)\,.\]
\end{itemize}
Note that, as is to be expected, those results give us information only in case $O$ (resp. $F$) does not contain the solution of \eqref{limitODE} starting from $x$.

Following the ideas of Freidlin and Wentzell, one can deduce from those two statements a rather precise statement about the time taken by the random perturbations to drive the process $Z^N$ to a disease free situation. Denote by $A$ the subset of points of $\R^d_+$ which are accessible by our system. Suppose that $I^N$ is the first component of $Z^N$. We are interested in the time needed for $Z^N(t)$ to reach the subset of $\R^d_+$ where its first component is $0$. Let us denote by $z^\ast\in A$ the endemic equilibrium, i.e., the point in $A$ such that $b(z^\ast)=0$ and $z^\ast_1>0$ 
(where 
$z^\ast_1$ denotes the first coordinate of $z^\ast$), which we assume to be unique. We now define the ``quasi--potential'' ($\bD_{T,A}$ stands for $\bD([0,T];A)$): 
\begin{align*}
V(z,z',T)&=\inf_{\phi\in D_{T,A},\ \phi(0)=z,\ \phi(T)=z'}I_T(\phi),\\
V(z,z')&=\inf_{T>0}V(z,z',T),\\
\overline{V}&=\inf_{z\in A,\ z_1=0}V(z^\ast,z)\,.
\end{align*}

For $z\in A$, we define the extinction time of the process $Z^N$ as
\[ T^{N,z}_{\text{ext}}:=\inf\{t>0,\ Z^{N,[Nz]/N}_1(t)=0\}\,.\]
We have the result (see Theorem 4.2.17 in \cite{brittonpardoux})
\begin{theorem}
For any $\eta>0$ and $z\in A$,
\begin{align*}
\lim_N\P\left(\exp[N(\overline{V}-\eta)]<T^{N,z}_{\text{ext}}<\exp[N(\overline{V}+\eta)]\right)&=1,\\
\text{and }\lim_N\frac{1}{N}\log\E[T^{N,z}_{\text{ext}}]&=\overline{V}\,.
\end{align*}
\end{theorem}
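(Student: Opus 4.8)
The plan is to treat this as a Freidlin--Wentzell exit problem: reading $T^{N,z}_{\text{ext}}$ as the first time the process $Z^N$ leaves the basin of attraction of the endemic equilibrium $z^\ast$ through the absorbing face $\{z_1=0\}$, and using the two large deviation bounds stated above to control that exit time. Before doing so I would record a few preliminary facts. First, that $I_T$ is a good rate function (lower semicontinuous with compact level sets), so that the infima defining $V(z^\ast,\cdot)$ and $\overline{V}$ are attained, or at least well approximated, by paths staying in $A$; second, that both large deviation bounds can be made uniform over a small closed neighborhood $\overline{B}$ of $z^\ast$, which is what the renewal argument below consumes; and third, that $\overline{V}>0$, since leaving a neighborhood of the stable point $z^\ast$ costs a strictly positive amount of action. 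Since the given starting point $z$ lies in the basin of $z^\ast$, by following the deterministic flow $Z^N$ enters $\overline{B}$ in bounded time with overwhelming probability, so I may reduce to the case where the process starts in $\overline{B}$.

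For the upper bound on the exit time (exit occurs before $\exp[N(\overline{V}+\eta)]$ with probability tending to $1$), I would use the large deviation lower bound to produce, for any $\eta>0$, a finite time $T_0$ and a path $\phi$ with $\phi(0)\in\overline{B}$, $\phi(T_0)_1=0$ and $I_{T_0}(\phi)<\overline{V}+\eta/2$. A tube around $\phi$ then has probability at least $\exp[-N(\overline{V}+3\eta/4)]$ of being followed, uniformly for starting points in $\overline{B}$. The argument is then a renewal one: between attempts the deterministic flow pulls $Z^N$ back into $\overline{B}$ in bounded time with overwhelming probability (stability of $z^\ast$), so using the strong Markov property the process makes of order $\exp[N(\overline{V}+3\eta/4)]$ essentially independent attempts, each succeeding with the above probability; a geometric count shows the number of attempts, hence the exit time up to the bounded factor $T_0$, is at most $\exp[N(\overline{V}+\eta)]$ with high probability.

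For the lower bound on the exit time (exit does not occur before $\exp[N(\overline{V}-\eta)]$), I would instead invoke the large deviation upper bound over the compact set $\overline{B}$: any path from $\overline{B}$ to $\{z_1=0\}$ has action at least $\overline{V}$ minus a small error controlled by the radius of $\overline{B}$, so the probability of exiting within a fixed window of length $T_1$ is at most $\exp[-N(\overline{V}-\eta/2)]$. Splitting $[0,\exp[N(\overline{V}-\eta)]]$ into of order $\exp[N(\overline{V}-\eta)]/T_1$ such windows and applying a union bound, the total exit probability is bounded by $T_1^{-1}\exp[-N\eta/2]\to 0$. Combining the two bounds yields the stated concentration of $T^{N,z}_{\text{ext}}$; the logarithmic-expectation statement then follows from the concentration together with a crude super-exponential tail bound on $T^{N,z}_{\text{ext}}$ (to rule out a heavy upper tail inflating the mean) and the elementary lower bound $\E[T^{N,z}_{\text{ext}}]\ge \exp[N(\overline{V}-\eta)]\,\P(T^{N,z}_{\text{ext}}>\exp[N(\overline{V}-\eta)])$.

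The hard part will be making the renewal argument fully rigorous: one must carefully quantify the uniformity of both large deviation bounds over the neighborhood $\overline{B}$, control the return times to $\overline{B}$, and handle the fact that the noise here is driven by Poisson processes rather than a nondegenerate diffusion, so the rate function is non-quadratic and the classical elliptic Freidlin--Wentzell estimates do not transfer verbatim. A related subtlety is the behavior of the quasi-potential near the absorbing face $\{z_1=0\}$, which is a characteristic boundary for the drift $b$ (the flow does not cross it transversally at typical points), so that the minimizing paths defining $\overline{V}$ must be analyzed with some care to confirm that $\overline{V}$ is indeed the correct exponential rate.
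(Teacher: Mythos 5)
The paper gives no proof of this theorem, deferring to Theorem 4.2.17 of the cited monograph \cite{brittonpardoux}, whose argument is precisely the Freidlin--Wentzell exit-time machinery you outline: a renewal/attempt scheme driven by the LDP lower bound for the upper estimate on $T^{N,z}_{\text{ext}}$, a window-plus-union-bound (cycle) argument driven by the LDP upper bound for the lower estimate, and a geometric tail bound yielding the expectation asymptotics. Your sketch matches that route, and you correctly flag the two points requiring genuine care in this setting, namely the uniformity of both LDP bounds over a neighborhood of $z^\ast$ and the degeneracy of the jump rates (hence of the rate function) at the characteristic absorbing boundary $\{z_1=0\}$.
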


Note that $\overline{V}$ is the value function of an optimal control problem. It can be computed explicitly in case of the $\bS\bI\bS$ model, in which case $\overline{V}=\log R_0-1+R_0^{-1}$.  Unless $\overline{V}$ is quite small, we expect $\exp(\overline{V}N)$ to be very large.

  In between the CLT and Large Deviations, we have the theory of ``moderate deviations''.
  Let us explain what we can learn from this theory, concerning our problem. We will only give a brief sketch of the ideas, referring the reader to \cite{pardouxEJP20} for the details. 
  
  Large Deviations discusses the probability of observing deviations from the LLN of the order of 1, as well as the time we have to wait for observing such deviations. The CLT predicts deviations from the LLN of the order of $N^{-1/2}$. Moderate Deviations discusses deviations of the order of $N^{-\alpha}$, for some $0<\alpha<1/2$: both the probability of observing such deviations, and the time we have to wait to see such deviations. This should allow us to predict extinction in less time than what Large Deviations predicts, with a critical population size larger than that associated to the CLT. 
  
  Since we want to discuss deviations from the endemic equilibrium $z^\ast$ of the order of $N^{-\alpha}$, let us consider the process $Z^N_z(t)$, starting from $Z^N_z(0)=z^\ast+N^{-\alpha}z$, where $z\in\R^d$ is arbitrary. 
  We want to study the Moderate Deviations of that process, which amounts to studying the Large Deviations  of
 $Z^{N,\alpha}_z(t):=N^\alpha(Z^N_z(t)-z^\ast)$.    For some $a>0$, the above Freidlin--Wentzell result tells us that if we define ($Z^N_{z,1}(t)$ stands for the first coordinate of $Z^N_z(t)$)
 \[ T^N_{z,a}=\inf\{t>0,\ Z^{N,\alpha}_{z,1}(t)\le -a\}\,,\]
 we obtain, for a certain $\overline{V}_a$, any $\eta>0$,
\begin{align*}
\lim_N\P\left(\exp[N^{1-2\alpha}(\overline{V}_a-\eta)]<T^{N}_{z,a}<\exp[N^{1-2\alpha}(\overline{V}_a+\eta)]\right)&=1,
\\ 
\lim_N N^{2\alpha-1}\log\E[T^{N}_{z,a}]&=\overline{V}_a\,.
\end{align*}
The case $\alpha=1/2$ is covered by the CLT, the case $\alpha=0$ by Large Deviations. Moderate Deviations, fills  the gap between those two regimes. If the population size $N$ is such that $z^\ast_1$, the first coordinate of $z^\ast$,
 is of the order of $N^{-\alpha}$, for some $0<\alpha<1/2$, Moderate Deviations will predict extinction in time of the order of 
$\exp[N^{1-2\alpha}\overline{V}_{a}]$, with $a=N^\alpha z^\ast_1$.  Of course, the value of $z^\ast_1$
is independent of $N$. The previous sentence should be understood as follows: if the population size $N$ is such that 
for some $0<\alpha<1/2$, the quantity $N^{-\alpha}$  of the order of $z^\ast_1$, then Moderate Deviations will predict extinction in time of the order of 
$\exp[N^{1-2\alpha}\overline{V}_{a}]$, with $a=N^\alpha z^\ast_1$. Note that in case $\alpha=0$, this means no restriction on $N$.

\section{Non--Markov and  integral equation models}\label{sec:nonmarkov}

\subsection{The $\bS\bI\bR$ model}\label{sec:SIRnonM}
In this section, we will still assume that the infectivity is constant, but we shall let the infectious period have a general probability distribution. The way Kermack and McKendrick assumed a general distribution for the infectious period in their 1927 paper \cite{KMK} was to choose an infection age dependent recovery rate. Let us first show that this formulation covers all absolutely continuous distributions for the duration of the infectious period.

To an $\R_+$--valued random variable $X$, we associate its cumulative distribution function (cdf) $F(t)=\P(X\le t)$ and its survival probability $F^c(t)=1-F(t)=\P(X>t)$.
 If $F$ has a density $f(t)$ (\textit{i.e.,} $f(t)=F'(t)$), then we define its hazard function as the quantity $\gamma(t):=f(t)/F^c(t)$. Let $P(t)$ be a standard Poisson process. Then the law of $X$ coincides with that of the first jump of the counting process $P\left(\int_0^t\gamma(s)ds\right)$, which follows from the following computations.
\begin{align*}
\P\left[P\left(\int_0^t\gamma(s)ds\right)=0\right]&=\exp\left(-\int_0^t\gamma(s)ds\right)\\
&=\exp\left(\int_0^t\frac{d}{ds}\left[\log F^c(s)\right]ds\right)\\
&=F^c(t)=\P(X>t)\,.
\end{align*}
In other words, by choosing an infection age dependent recovery rate, Kermack and McKendrick allowed a general absolutely continuous distribution for the duration of the infectious period. We shall use a different formulation, and allow a completely arbitrary distribution for the infectious period. 

Concerning the infection process, we have the same infection rate as in Section~\ref{subsec:SIRLLN}, namely
\begin{equation}\label{Upsilon}
\Upsilon^N(t)=\lambda I^N(t)\frac{S^N(t)}{N}\,.
\end{equation}
Let $A^N(t)$ denote the cumulative counting process of newly infected individuals on the time interval $(0,t]$. We have, as above,
\begin{equation}\label{eq:AN}
A^N(t)=\int_0^t\int_0^\infty {\bf1}_{u\le\Upsilon^N(s)} Q(ds,du),\ t\ge0\,.
\end{equation}
Clearly the following balance equations hold
\begin{align} \label{eqn-SIR-balance}
S^N(t)+I^N(t)+R^N(t)&=N, \non\\
S^N(t)&=S^N(0)-A^N(t),\\
I^N(t)&=I^N(0)+A^N(t)-R^N(t)\,. \non
\end{align}

To each newly infected individual $i\in \NN$, we associate a random variable $\eta_i$ to represent its infectious duration. We assume that the $\eta_i$'s are i.i.d. with a cumulative distribution function (c.d.f.) $F$, and let $F^c:=1-F$.  For each initially infectious individual $j=1,\dots, I^N(0)$, let $\eta_j^0$ be its remaining infectious period. We also assume that the $\eta_j^0$'s are i.i.d. with a c.d.f. $F_0$, and let $F^c_0:=1-F_0$. 

\begin{remark}
An initially infected individual is thought of as having been infected at some time $\tau_0<0$.
Assuming that the law of the duration of the infectious period of this individual is $F$,
the probability that he/she is still infectious at some time $t>0$, given the time of infection,  equals $F^c(t-\tau_0)/F^c(-\tau_0)$, the conditional law of still being infectious at time $t$, given that he/she was still infectious at time $0$. In the case where $F$ is exponential, this is exactly $F^c(t)$, hence there is no reason to choose an $F_0$ different from $F$. But if $F$ is not the exponential distribution,  then $F_0\not=F$.
\end{remark}

\begin{remark}
In this model it is clear that $R_0=\lambda\int_0^\infty F^c(t)dt$, since 
$\int_0^\infty F^c(t)dt=\E[\eta]$.
\end{remark}

Denoting by $\tau^N_i$, $i\ge1$ the successive jumps times of $A^N$, the dynamics of $I^N(t)$ can be described by
\begin{align}\label{eqn-SIR-In}
I^N(t) = \sum_{j=1}^{I^N(0)} \bone_{\eta^0_j > t}+ \sum_{i=1}^{A^N(t)} \bone_{\tau^N_i + \eta_i >t}, \quad t\ge 0.
\end{align}
and the dynamics of $R^N(t)$:
\begin{align}\label{eqn-SIR-Rn}
  R^N(t) &\,=\,R^N(0)+ \sum_{j=1}^{I^N(0)}{\bf1}_{\eta^0_j\le t}+\sum_{i=1}^{A^N(t)}{\bf1}_{\tau^N_i+\eta_i\le t}\,. 
\end{align}

Define the fluid-scaled process $\bar{X}^N:= N^{-1} X^N$ for any process $X^N$. We make the following assumption on the initial quantities. 
\begin{assumption} \label{AS-SIR-1}
There exists a deterministic constant $(\bar{S}(0),\bar{I}(0),\bar{R}(0))\in (0,1)^3$ such that 
$\bar{S}(0)+\bar{I}(0)+\bar{R}(0)=1$, and  as $N\to\infty$,
 $(\bar{S}^N(0),\bar{I}^N(0),\bar{R}^N(0)) \to (\bar{S}(0),\bar{I}(0),\bar{R}(0))$ in probability. 
\end{assumption}

\begin{theorem}\label{thm-FLLN-SIR} 
{\bf (Functional Law of Large Numbers)}
Under Assumption~\ref{AS-SIR-1}, 
$$
(\bar{S}^N, \bar{I}^N, \bar{R}^N) \to ( \bar{S},  \bar{I}, \bar{R})\qinq \bD^3$$
in probability 
as $N\to\infty$, 
where the limit process $(\bar{S},\bar{I}, \bar{R})$ is the unique solution to the system of deterministic Volterra integral equations
\begin{align}
\bar{S}(t) &= \bar{S}(0) - \int_0^t\bar\Upsilon(s) ds, \label{SIR-barS}\\
\bar{I}(t)  & =  \bar{I}(0) F_0^c(t) +  \int_0^t F^c(t-s) \bar\Upsilon(s) ds,  \label{SIR-barI}\\
\bar{R}(t) &= \bar{R}(0)+\bar{I}(0) F_0(t) + \int_0^t F(t-s) \bar\Upsilon(s) ds, \label{SIR-barR}
\end{align}
with
\begin{equation} \label{eqn-barUpsilon-constlambda}
\bar\Upsilon(t) = \lambda \bar{S}(t) \bar{I}(t)
\end{equation}
for $t\ge 0$.  $\bar{S}$ is in $\bC$.  If $F_0$ is continuous, then $\bar{I}$ and $\bar{R}$ are in $\bC$; otherwise they are in $\bD$. 
\end{theorem}

The limits $(\bar{S},\bar{I})$ are uniquely determined by the two equations \eqref{SIR-barS} and \eqref{SIR-barI}. Existence and uniqueness for such a system is well--known. Given the solution $(\bar{S},\bar{I})$, the limit $\bar{R}$ is given by \eqref{SIR-barR}. 
Assuming that $F_0$ and $F$ have densities $f_0$ and $f$, by taking derivatives in  \eqref{SIR-barS}, \eqref{SIR-barI} and \eqref{SIR-barR}, we obtain
\begin{equation}\label{eqn-SIR-deriv}
\left\{
\begin{aligned} 
\bar{S}'(t) &= - \bar\Upsilon(t),  \\
\bar{I}'(t)  & = -  \bar{I}(0) f_0(t) +  \Big(\bar\Upsilon(t) - \int_0^t f(t-s) \bar\Upsilon(s)  ds \Big),\\
\bar{R}'(t)  & =  \bar{I}(0) f_0(t) +  \int_0^t f(t-s)\bar\Upsilon(s) ds\,.
\end{aligned}
\right.
\end{equation}
Note that, as expected, $\bar{S}'(t)+\bar{I}'(t)+\bar{R}'(t)=0$.

Let us now sketch the proof of Theorem~\ref{thm-FLLN-SIR}.

\begin{proof}
Define $\bar{Q}(ds,du)=Q(ds,du)-dsdu$ the compensated measure. We have
\begin{align}\label{decompAN} 
\bar{A}^N(t)=\int_0^t\bar{\Upsilon}^N(s)ds+\bar{M}_A^N(t),
\end{align}
where
\[ \bar{M}_A^N(t)=\frac{1}{N}\int_0^t\int_0^\infty{\bf1}_{u\le\Upsilon^N(s)} \bar{Q}(ds,du)\,.\]
Since $0\le\bar{\Upsilon}^N(s)\le\lambda$, the first term on the right of \eqref{decompAN} is an increasing process
which is Lipchitz continuous, with a Lipschitz constant bounded by $\lambda$. Hence that sequence is equi--continuous, it is tight in $\bC$, and also in $\bD$. The second term is a martingale, which satisfies
\[ \E\left[\left(\bar{M}_A^N(t)\right)^2\right]\le\frac{\lambda}{N}t,\]
hence from Doob's maximal inequality, it converges to $0$ in mean square, locally uniformly in $t$. As a consequence, along a subsequence, $\bar{A}^N\Rightarrow\bar{A}$ in $\bD$, where for $0\le s<t$, 
$0\le \bar{A}(t)-\bar{A}(s)\le \lambda(t-s)$, and along the same subsequence, 
$S^N(\cdot)\Rightarrow \bar{S}(0)-\bar{A}(\cdot)$ in $\bD$.

Let now $I^N(t)=I^N_0(t)+I^N_1(t)$, where
\[ I^N_0(t)=\sum_{j=1}^{I^N(0)}{\bf1}_{\eta^0_j>t},\quad I^N_1(t)=\sum_{i=1}^{A^N(t)}{\bf1}_{\tau^N_i+\eta_i>t}\,.\]
Consider first $I^N_0(t)$. Define
\[ \breve{I}^N_0(t):=\sum_{j=1}^{N\bar{I}(0)} {\bf1}_{\eta^0_j>t}\,.\]
It is not too hard to deduce from the law of large numbers, see Theorem 14.3 in \cite{billingsley} that as $N\to\infty$, $\breve{I}^N_0\to\bar{I}(0)F^c$ in $\bD$ in probability. Next, as $N\to\infty$,
\begin{align*}
|\bar{I}^N_0(t)-\breve{I}^N_0(t)|\le |\bar{I}^N(0)-\bar{I}(0)|\to0
\end{align*}
in probability, thanks to Assumption~\ref{AS-SIR-1}. 

We finally consider $I^N_1(t)$. Let $\sF^N_t:=\sigma\{\Upsilon^N(s), s\le t; \tau^N_i, i\le A^N(t)\}$ and define
\begin{align*}
 \breve{I}^N_1(t):&= \E[\bar{I}^N_1(t)|\sF^N_t]\\
 &= \frac{1}{N}\sum_{i=1}^{A^N(t)}F^c(t-\tau^N_i)\\
 &=\int_0^t F^c(t-s)d\bar{A}^N(s)\,.
 \end{align*}
 It is not hard to deduce from a slight extension of the Portmanteau theorem (see e.g. Lemma 4.4 in \cite{FPP2020b}) that along a subsequence along which $\bar{A}^N\Rightarrow \bar{A}$, for each $t>0$, 
 \[ \breve{I}^N_1(t)\Rightarrow\int_0^tF(t-s)d\bar{A}(s)\,.\]
  From a tightness argument, we deduce that this convergence holds in fact in $\bD$. Finally we consider $V^N(t):=\bar{I}^N_1- \breve{I}^N_1$. 
  We have
  \begin{align*}
  V^N(t)&=\frac{1}{N}\sum_{i=1}^{A^N(t)}\kappa^N_i(t),\ \text{ where}\\
  \kappa^N_i(t)&={\bf1}_{\tau^N_i+\eta_i>t}-F^c(t-\tau^N_i)\,.
  \end{align*}
  It is easy to check that
  \[\E[\kappa^N_i(t)|\sF^N_t]=0,\quad \text{and for }i\not=j, \  \E[\kappa^N_i(t)\kappa^N_j(t)|\sF^N_t]=0\,.\]
  Thus
  \begin{align*}
  \E[V^N(t)^2|\sF^N_t]&=\frac{1}{N^2}\sum_{i=1}^{A^N(t)}\E[\kappa^N_i(t)^2|\sF^N_t]\\
  &\le\frac{\bar{A}^N(t)}{N},\\
  \E[V^N(t)^2]&\le\frac{\lambda}{N}t\,.
  \end{align*}
  With some additional effort, one can show that in fact  $V^N(t)\to0$ in probability, locally uniformly in $t$, see the proof of Lemma 5.2 in \cite{PP-2020}. Moreover one can show by similar arguments that $\bar{R}^N\Rightarrow\bar{R}$, and that the limiting equations have a unique deterministic solution, hence the whole sequence converges, and the convergence is in probability.
  \end{proof}
  
 We next turn to the central limit theorem. For that sake, we need to state an appropriate assumption concerning the initial quantities.
 \begin{assumption}\label{AS-SIR-2}
There exists a random vector $(\hat{S}(0),\hat{I}(0),\hat{R}(0))$ such that
\[ (\hat{S}^N(0),\hat{I}^N(0),\hat{R}^N(0))\Rightarrow(\hat{S}(0),\hat{I}(0),\hat{R}(0))\,.\]
In addition, we assume that $\sup_N\E\left(|\hat{S}^N(0)|^2+|\hat{I}^N(0)|^2+|\hat{R}^N(0)|^2\right)<\infty$.
\end{assumption}  

We can now state the following result.
 \begin{theorem} \label{thm-FCLT-SIR}
{\bf (Functional Central Limit Theorem)} Under Assumption~\ref{AS-SIR-2}, 
 \begin{equation*} 
(\hat{S}^N, \hat{I}^N, \hat{R}^N) \RA (\hat{S}, \hat{I}, \hat{R}) \qinq \bD^3 \qasq N \to\infty,
\end{equation*}
where the limit $(\hat{S}, \hat{I}, \hat{R})$ is the unique solution to the following set of linear stochastic Volterra integral equations driven by Gaussian processes: 
\begin{align}\label{SIR-Shat}
\hat{S}(t) &= \hat{S}(0)  -    \int_0^t \hat{\Upsilon}(s) ds -  \hat{M}_A(t), 
\\
\label{SIR-Ihat}
\hat{I}(t) &=  \hat{I}(0) F^c_0(t)   +  \int_0^t F^c(t-s)  \hat{\Upsilon}(s) ds +  \hat{I}_{0}(t) +  \hat{I}_{1}(t),  
\\
\label{SIR-Rhat}
\hat{R}(t) &= \hat{R}(0)+\hat{I}(0) F_0(t)  + \lambda \int_0^t F(t-s) \hat{\Upsilon}(s)ds + \hat{R}_{0}(t) +  \hat{R}_{1}(t), 
\end{align}
and
\begin{equation} \label{eqn-hatUpsilon-constlambda}
\hat{\Upsilon}(t) =  \lambda (\hat{S}(t) \bar{I}(t) +\bar{S}(t)\hat{I}(t)),
\end{equation}
with $\bar{S}(t)$ and $\bar{I}(t)$ given in Theorem~\ref{thm-FLLN-SIR}.
Here $ (\hat{I}_{0},\hat{R}_0)$, independent of $(\hat{S}(0),\hat{I}(0),\hat{R}(0))$, is a mean-zero two-dimensional Gaussian process with the covariance functions: for $t, t'\ge 0$,
 \begin{align} \label{eqn-cov-I0R0-SIR}
\Cov(\hat{I}_{0}(t), \hat{I}_{0}(t')) &= \bar{I}(0) (F_0^c(t\vee t') - F_0^c(t) F_0^c(t')), \non \\
\Cov(\hat{R}_{0}(t), \hat{R}_{0}(t')) &= \bar{I}(0) (F_0(t\wedge t') - F_0(t) F_0(t')), \\
\Cov(\hat{I}_0(t), \hat{R}_0(t') ) & = \bar{I}(0)\big[ (F_0(t') - F_0(t)) \bone(t'\ge t) - F_0^c(t) F_0(t') \big].  \non
\end{align}
If $F_0$ is continuous, then $ \hat{I}_{0}$ and $\hat{R}_0$ are continuous. 
The limit process
$ (\hat{M}_A, \hat{I}_1,\hat{R}_1)$,   is a continuous three-dimensional Gaussian process, independent of 
$(\hat{S}(0),\hat{I}(0),\hat{R}(0),\hat{I}_0, \hat{R}_0)$,  and has the representation
\begin{align}
\hat{M}_A(t) = W_F([0,t]\times[0,\infty)), \quad
 \hat{I}_1(t) = W_F([0,t]\times[t,\infty)), \quad
\hat{R}_1(t) = W_F([0,t]\times[0,t]),  \non
\end{align}
where  $W_F$ is
a Gaussian white noise process on $\RR_+^2$ with mean zero and 
$$\E \left[ W_F((a,b]\times (c,d])^2\right] = \int_a^b (F(d-s)-F(c-s)) \bar\Upsilon(s) ds,
$$
for $0 \le a \le b$ and $0 \le c \le d$. 
 The limit process $\hat{S}$ has continuous sample paths and $\hat{I}$ and $\hat{R}$ have  c{\`a}dl{\`a}g sample paths.  If the c.d.f. $F_0$ is continuous, then $\hat{I}$ and $\hat{R}$ have continuous sample paths. 
 If $(\hat{S}(0),\hat{I}(0),\hat{R}(0))$ is a Gaussian random vector, then  $(\hat{S},\hat{I},\hat{R})$ is a Gaussian process. 
 \end{theorem}
 Note that the notion of white noise in defined in Section \ref{sec:BM-WN} in the Appendix below.
 
 From the representation of the limit processes $ (\hat{M}_A, \hat{I}_1,\hat{R}_1)$ using the white noise $W_F$, we easily obtain 
their covariance functions: for $t, t'\ge 0$, 
\begin{align*}
\Cov(\hat{M}_A(t), \hat{M}_A(t'))  &=  \int_0^{t\wedge t'}  \bar\Upsilon(s) ds, \\
\Cov(\hat{I}_1(t), \hat{I}_1(t')) &=  \int_0^{t\wedge t'} F^c(t\vee t'-s)  \bar\Upsilon(s) ds, \\
\Cov(\hat{R}_1(t), \hat{R}_1(t')) &=  \int_0^{t\wedge t'} F(t\wedge t'-s)  \bar\Upsilon(s) ds, \\
\Cov(\hat{M}_A(t), \hat{I}_1(t')) & =  \int_0^{t\wedge t'} F^c(t'-s)  \bar\Upsilon(s) ds,\\
\Cov(\hat{M}_A(t), \hat{R}_1(t')) &=  \int_0^{t\wedge t'} F(t'-s)  \bar\Upsilon(s) ds, \\
\Cov(\hat{I}_1(t), \hat{R}_1(t')) &=  \int_0^{t} (F(t'-s) - F(t-s)) \bone(t'>t)   \bar\Upsilon(s) ds.  
\end{align*}

In the FCLT, the limits $(\hat{S},\hat{I})$ are the unique solution of the system of stochastic Volterra integral equations \eqref{SIR-Shat} and \eqref{SIR-Ihat}.  Once $\hat{S}$ and $\hat{I}$ are specified,  $\hat{R}$ is given by the formula \eqref{SIR-Rhat}. 

For the proof of Theorem~\ref{thm-FCLT-SIR}, we refer the reader to Section 6 of \cite{PP-2020}.

\begin{remark}
When the infectious periods are deterministic, that is, $\eta_i$ is equal to a positive constant $\eta$ with probability one, the dynamics of $I^N(t)$ can be written as
\begin{align*} 
I^N(t) = \sum_{j=1}^{I^N(0)} \bone(\eta^0_j > t)+ A^N(t) - A^N((t-\eta)^+), \quad t\ge 0. 
\end{align*}
We assume that $\eta^0_j \sim U[0,\eta]$, that is, $F_0(t)= t/\eta$ for $t \in [0,\eta]$,  which is  the equilibrium (stationary excess) distribution of 
$F(t) = \bone_{t \ge \eta}$, $t\ge 0$. 
The fluid equation $\bar{I}(t)$ becomes 
$$
\bar{I}(t) = \bar{I}(0) (1-t/\eta)^+ + \int_{(t-\eta)^+}^t \bar\Upsilon(s) ds
$$
which gives 
$$
\bar{I}'(t) = - \frac{1}{\eta}\bar{I}(0) \bone_{t < \eta}+ \bar\Upsilon(t) - \bone_{t\ge\eta} \bar\Upsilon(t-\eta).  
$$
In the FCLT, we have
\begin{align}\label{hatI-SIR-det}
\hat{I}(t) &=  \hat{I}(0) (1-t/\eta)^+    +  \int_{(t-\eta)^+}^t \hat\Upsilon(s)   ds  +  \hat{I}_{0}(t) +  \hat{I}_{1}(t), \quad t \ge 0, 
\end{align}
where 
 $ \hat{I}_{0}(t)$, $t \in [0,\eta]$,  is a continuous mean-zero Gaussian process with the covariance function 
\begin{equation*}
\Cov(\hat{I}_{0}(t), \hat{I}_{0}(t')) = \bar{I}(0) (1-(t\wedge t')/\eta - (1-t/\eta) (1-t'/\eta)), \quad t, t' \in [0, \eta], 
\end{equation*}
and  $\hat{I}_1(t)$, $t\ge 0$,  is a continuous mean-zero Gaussian process with the covariance function 
\begin{equation*}
\Cov(\hat{I}_1(t), \hat{I}_1(t')) =  \int_0^{t\wedge t'} \bone_{t \vee t'-s< \eta} \bar\Upsilon(s) ds, \quad t, t' \ge 0. 
\end{equation*}
Note that the effect of the initial quantities vanish after time $\eta$, that is, in the stochastic integral equation \eqref{hatI-SIR-det} of $\hat{I}(t)$, the components $\hat{I}_0(t)$ and $ \hat{I}(0) (1-t/\eta)^+  $ vanish after $\eta$. 
\end{remark}

\subsection{An alternative initial condition}
In the above model, we have assumed that the remaining infectious periods have a different distribution $F_0$. That modeling approach is mostly due to a lack of information concerning the times infection for these initially infected individuals. 
An alternative modeling approach is to assume that the infection times of the initially infected individuals are known.
We assume the laws of the infectious durations of all individuals are the same, given by the c.d.f. $F$.
This is reasonable since the model is for the same disease. 
Of course, there is no difference in the two modeling approaches in the Markovian setting due to the lack of memory property of exponential distributions.

Suppose that the initially infected individuals are infected at times $\tau^N_{j,0}$, $j=1,\dots,I^N(0)$.
Then $\tilde\tau^N_{j,0} = -\tau^N_{j,0}$, $j=1,\dots, I^N(0)$, represent the amount of time that an initially infected individual has been infected by time 0, that is, the age of infection at time 0. 
WLOG, we can assume that $0 > \tau^N_{1,0}> \cdots>\tau^N_{I^N(0),0}$. 
We use the same notation $\eta^0_j$ to denote the remaining infectious duration for $j=1,\dots, I^N(0)$. Then the distribution of $\eta^0_j$ will naturally depend on the elapsed infectious time $\tilde\tau^N_{j,0}$. In particular, the conditional distribution of $\eta^0_j$ given that 
$\tilde{\tau}_{j,0}^N=s>0$ is given by
\begin{align} \label{eqn-eta0-age}
\P(\eta^0_j > t | \tilde{\tau}_{j,0}^N=s) = \frac{F^c(t+s)}{F^c(s)}, \qforq t, s >0. 
\end{align}
Note that the $\eta^0_j$'s are independent but not identically distributed. 
Set $\tilde{\tau}_{0,0}^N=0$. Let $I^N(0,x) = \max\{j\ge 0: \tilde{\tau}_{j,0}^N \le x \}$. Assume that there exists $\bar{x}>0$ such that  $I^N(0) = I^N(0,\bar{x})$. 

We will have the same description of the dynamics of $I^N(t)$ in \eqref{eqn-SIR-In}, however, the variables $\eta^0_j$ implicitly depend on $\tilde{\tau}_{j,0}^N$. One can explicitly write  $I^N(t)$ as 
\begin{align}\label{eqn-SIR-In-I0x}
I^N(t) = \sum_{j=1}^{I^N(0)} \bone_{\eta^0_j > t}\bone_{\tilde{\tau}_{j,0}^N \le \bar{x}}+ \sum_{i=1}^{A^N(t)} \bone_{\tau^N_i + \eta_i >t}, \quad t\ge 0.
\end{align}
Instead of  Assumption~\ref{AS-SIR-1}, we assume that the following holds.


\begin{assumption} \label{AS-SIR-initial2-LLN}
 There exists a deterministic function $\bar{I}(0,\cdot)\in (0,1)$ such that 
 $\bar{I}^N(0,\cdot) \to \bar{I}(0,\cdot)$ in $\bD$ in probability as $N\to\infty$. Then $\bar{I}^N(0)\to \bar{I}(0):=\bar{I}(0,\bar{x}) \in (0,1)$ in probability. In addition, we assume that $(\bar{S}^N(0), \bar{R}^N(0)) \to (\bar{S}(0), \bar{R}(0))$ such that $\bar{S}(0)+ \bar{I}(0)+ \bar{R}(0)=1$.
 \end{assumption}
Then it can be shown that  the FLLN in Theorem~\ref{thm-FLLN-SIR} holds 
with the same $\bar{S}(t)$ in \eqref{SIR-barS}, and 
\begin{align}
\bar{I}(t)  & =  \int_0^{\bar{x}} \frac{F^c(t+y)}{F^c(y)}  \bar{I}(0, dy) +  \int_0^t F^c(t-s)  \bar\Upsilon(s) ds,  \label{SIR-barI-I0x}\\
\bar{R}(t) &=  \bar{R}(0)+ \int_0^{\bar{x}}\left(1- \frac{F^c(t+y)}{F^c(y)}\right)  \bar{I}(0, d y)   +  \int_0^t F(t-s) \bar\Upsilon(s) ds.  \label{SIR-barR-I0x}
\end{align}
This recovers the result in \cite{wang1975limit}, specialized to the SIR model. 
In addition to the FLLN limits, one can also establish the FCLT and obtain the Gaussian limits. 
Instead of Assumption~\ref{AS-SIR-2}, we assume the following holds.

\begin{assumption} \label{AS-SIR-initial2-CLT}
 There exist  a deterministic function $\bar{I}(0,\cdot)\in (0,1)$, a stochastic process $\hat{I}(0,\cdot)$, a constant vector $(\bar{S}(0), \bar{R}(0))\in [0,1]^2$ and a random vector $(\hat{S}(0), \hat{R}(0))$ such that 
$(\hat{S}^N(0),\hat{I}^N(0, \cdot), \hat{R}^N(0))\RA (\hat{S}(0),\hat{I}(0, \cdot), \hat{R}^N(0)) $ in $\RR\times \bD\times R$ as $N \to \infty$, where 
 $\hat{I}^N(0, \cdot):=\sqrt{N} (\bar{I}^N(0,\cdot) - \bar{I}(0,\cdot))$.
 \end{assumption}
Then it can be shown that the FCLT in Theorem~\ref{thm-FCLT-SIR} holds with the same $\hat{S}(t)$ in
\eqref{SIR-Shat}, and 
\begin{equation}\label{SIR-Ihat-I0x}
\hat{I}(t) =  \int_0^{\bar{x}} \frac{F^c(t+y)}{F^c(y)}  \hat{I}(0, dy)   +  \int_0^t F^c(t-s) \hat\Upsilon(s) ds +  \hat{I}_{0}(t) +  \hat{I}_{1}(t),  
\end{equation}
\begin{equation}\label{SIR-Rhat-I0x}
\hat{R}(t) = \hat{R}(0)+  \int_0^{\bar{x}} \left(1-\frac{F^c(t+y)}{F^c(y)}\right)  \hat{I}(0, dy) +  \int_0^t F(t-s) \hat\Upsilon(s) ds + \hat{R}_{0}(t) +  \hat{R}_{1}(t), 
\end{equation}
with  $\bar{I}(t)$ is given in \eqref{SIR-barI-I0x}, and the limits $\hat{I}_{0}(t)$ and $\hat{R}_{0}(t)$ are as given in Theorem~\ref{thm-FCLT-SIR}. 
However, the limits $(\hat{I}_{0},\hat{R}_{0})$ are continuous Gaussian processes with covariance functions: for $t,t'\ge 0$, 
\begin{align}
\Cov (\hat{I}_{0}(t), \hat{I}_{0}(t')) &=\int_0^{\bar{x}} \left(\frac{F^c(t\vee t'+y)}{F^c(y)} -\frac{F^c(t+y)}{F^c(y)}\frac{F^c(t'+y)}{F^c(y)}\right)  \bar{I}(0, dy),  \label{eqn-cov-hatI0-a} \\
 \Cov (\hat{R}_{0}(t), \hat{R}_{0}(t')) &= \int_0^{\bar{x}} \left[ \left(1-\frac{F^c(t \wedge t'+y)}{F^c(y)}\right) - \left(1-\frac{F^c(t+y)}{F^c(y)}\right) \left(1-\frac{F^c(t'+y)}{F^c(y)}\right) \right]  \bar{I}(0, dy),  \label{eqn-cov-hatR0-a}\\
 \Cov (\hat{I}_{0}(t), \hat{R}_{0}(t')) &= \int_0^{\bar{x}} \left[ \left(\frac{F^c(t+y)}{F^c(y)} -\frac{F^c(t'+y)}{F^c(y)}\right) \bone_{t \le t'}  - \frac{F^c(t+y)}{F^c(y)} \left(1-\frac{F^c(t'+y)}{F^c(y)}\right) \right] \bar{I}(0, dy).  \label{eqn-cov-hatI0R0-a}
\end{align}
This recovers the result in \cite{wang1977gaussian}, specialized to the SIR model.

\subsection{The SEIR model}
In the $\bS\bE\bI\bR$ model, the population is split into four groups of individuals: Susceptible, Exposed, Infectious and Recovered/Immune.  
Let $S^N(t), E^N(t), I^N(t)$ and $R^N(t)$ be the numbers of susceptible, exposed and infectious and removed/immune individuals. 
Infections happen in the same way as the SIR model, that is, through contacts of infectious individuals and susceptible ones according to a Poisson process with rate $\lambda$. Thus, the instantaneous infection rate $\Upsilon^N(t)$ is given as in \eqref{Upsilon}. Then the cumulative process of newly infected individuals in $(0,t]$, $A^N(t)$, has the same expression as in \eqref{eq:AN}. Let $L^N(t)$ be the number of individuals that have become infectious after being exposed by time $t$. We have the following balance equations: for each $t\ge 0$, 
\begin{align}
N&=S^N(t) + E^N(t) +  I^N(t) + R^N(t) , \non\\
E^N(t) &=  E^N(0) + A^N(t) - L^N(t), \non\\
 I^N(t) &= I^N(0) + L^N(t) - R^N(t). \non
\end{align}

Each newly infected individual $i$ is associated with the time epoch of being exposed $\tau^N_i$, exposing duration $\xi_i$ and infectious duration $\eta_i$. Each initially infectious individual $j=1,\dots, I^N(0)$, is associated with the remaining infectious period $\eta_j^0$. Each initially exposed individual $k=1,\dots, E^N(0)$, is associated with the remaining exposing time $\xi^0_k$ and the infectious duration $\eta^0_k$. 

Then, we can represent the dynamics of $(S^N, E^N, I^N, R^N)$ as follows: for $t\ge 0$, 
\begin{align*}
S^N(t) &= S^N(0) - A^N(t) \\
E^N(t) &= \sum_{k=1}^{E^N(0)} \bone_{\xi_k^0 >t} +  \sum_{i=1}^{A^N(t)} \bone_{\tau_i^N + \xi_i >t}, \\
I^N(t) &= \sum_{j=1}^{I^N(0)} \bone_{\eta_j^0 > t} + \sum_{k=1}^{E^N(0)} \bone_{\xi_k^0 \le t} \bone_{\xi_k^0+ \eta_k >t}  + \sum_{i=1}^{A^N(t)} \bone_{\tau_i^N+ \xi_i \le t} \bone_{\tau_i^N+ \xi_i + \eta_i >t},\\
R^N(t) & = \sum_{j=1}^{I^N(0)} \bone_{\eta_j^0 \le t} +  \sum_{j=1}^{E^N(0)} \bone_{\xi_j^0+ \eta_j \le t}  + \sum_{i=1}^{A^N(t)} \bone_{\tau_i^N+ \xi_i + \eta_i \le t}. 
\end{align*}

Assume that $(\xi_i, \eta_i)$'s are i.i.d. bivariate random vectors with a joint distribution $H(du,dv)$, which has marginal c.d.f.'s $G$ and $F$ for $\xi_i$ and $\eta_i$, respectively, and a conditional c.d.f. of $\eta_i$, $F(\cdot |u)$ given that $\xi_i=u$. Assume that $(\xi^0_j, \eta_j)$'s are i.i.d. bivariate random vectors with a joint distribution $H_0(du,dv)$, which has marginal c.d.f.'s $G_0$ and $F$ for $\xi_j^0$ and $\eta_j$, respectively, and a conditional c.d.f. of $\eta_j$, $F_0(\cdot|u)$ given that  $\xi^0_j=u$.  (Note that the pair $(\xi^0_j, \eta_j)$ is the remaining exposing time and the subsequent infectious period for the $i^{\rm th}$ individual initially being exposed.)
In addition, we assume that the sequences $\{\xi_i, \eta_i,\ i\ge1\}$, $\{\xi^0_j, \eta_j,\ 1\le j\le E^N(0)\}$ and $\{\eta^0_j,\ 1\le j\le I^N(0)\}$ are mutually independent. 
We use the notation $G^c=1-G$, and similarly for $G_0^c$, $F^c$ and $F_0^c$. 
Define 
\begin{align*}
 \Phi_0(t)&:= \int_0^t \int_0^{t-u} H_0(du, dv) =\int_0^t \int_0^{t-u} F_0(dv|u) d G_0(u), \\
  \Psi_0(t) &:= \int_0^t \int_{t-u}^\infty H_0(du, dv)  = \int_0^t \int_{t-u}^\infty F_0(dv| u) d G_0(u) = G_0(t)- \Phi_0(t),
\end{align*}
and 
\begin{align*}
 \Phi(t)&:= \int_0^t \int_0^{t-u} H(du, dv) =\int_0^t \int_0^{t-u} F(dv|u) d G(u), \\
  \Psi(t) &:= \int_0^t \int_{t-u}^\infty H(du, dv)  = \int_0^t \int_{t-u}^\infty F(dv| u) d G(u) = G(t)- \Phi(t). 
  \end{align*}
Note that in the case of independent $\xi_i$ and $\eta_i$, e have $F(dv) = F(dv|u)$, and 
\begin{align*} 
\Phi(t) = \int_0^t F(t-u) d G(u), \quad \Psi(t) = \int_0^t F^c(t-u) d G(u) = G(t) - \Phi(t). 
\end{align*} 
Similarly, with independent $\xi_j^0$ and $\eta_j$, we have  $F_0(dv) = F_0(dv|u)=F(dv)$, and 
\begin{align*}  
\Phi_0(t) = \int_0^t F(t-u) d G_0(u), \quad \Psi_0(t) = \int_0^t F^c(t-u) d G_0(u) = G_0(t) - \Phi_0(t). 
\end{align*}

\begin{assumption} \label{AS-SEIR-1}
There exists a deterministic constant $(\bar{S}(0),\bar{E}(0),\bar{I}(0),\bar{R}(0))\in [0,1]^4$ such that 
$\bar{S}(0)+\bar{E}(0)+\bar{I}(0)+\bar{R}(0)=1$, and  as $N\to\infty$,
 $(\bar{S}^N(0),\bar{E}^N(0),\bar{I}^N(0),\bar{R}^N(0)) \to (\bar{S}(0),\bar{E}(0),\bar{I}(0),\bar{R}(0))$ in probability. 
\end{assumption}

Define the LLN-scaled processes as in the $\bS\bI\bR$ model. 

\begin{theorem}\label{thm-FLLN-SEIR}
{\bf (Functional Law of Large Numbers for the }$\bS\bE\bI\bR$ {\bf model}).
Under Assumption~\ref{AS-SEIR-1}, we have 
\begin{equation*} 
\left(\bar{S}^N, \bar{E}^N,  \bar{I}^N, \bar{R}^N \right) \to \left(\bar{S}, \bar{E},  \bar{I}, \bar{R} \right) \qinq \bD^4
\end{equation*}
in probability as $n\to\infty$, where the limit process $(\bar{S}, \bar{E}, \bar{I}, \bar{R}) $ is the unique solution to the system of deterministic equations: for each $t\ge 0$, 
\begin{align}
\bar{S}(t) &=  \bar{S}(0)  - \bar{A}(t)  =  \bar{S}(0) -   \int_0^t  \bar\Upsilon(s) ds,  \nonumber \\
\bar{E}(t) &= \bar{E}(0) G_0^c(t) + \int_0^t G^c(t-s) \bar\Upsilon(s)  ds,  \label{eqn-barE-SEIR} \\
\bar{I}(t) 
& =  \bar{I}(0) F^c_0(t) +\bar{E}(0) \Psi_0(t)  +    \int_0^t \Psi(t-s) \bar\Upsilon(s)  ds,    \label{eqn-barI-SEIR}\\
\bar{R}(t) 
& = \bar{R}(0)+ \bar{I}(0) F_0(t) +\bar{E}(0) \Phi_0(t)  +    \int_0^t \Phi(t-s)  \bar\Upsilon(s)  ds .  \nonumber
\end{align} 
with $\bar\Upsilon(t) = \lambda \bar{S}(t) \bar{I}(t)$, the same in \eqref{eqn-barUpsilon-constlambda} for the SIR model.  
The limit $\bar{S}$ is in $\bC$ and $\bar{E}$, $\bar{I}$ and $\bar{R}$ are in $\bD$.  If $G_0$ and $F_0$ are continuous, then they are in $\bC$. 
\end{theorem}

For the proof of Theorem~\ref{thm-FLLN-SEIR}, as well as for the associated Functional Central Limit Theorem, we refer the reader to \cite{PP-2020}.

\noindent{\bf An alternative initial condition.}
For the initially exposed individuals, let $\tau^N_{j,0}, j=1,\dots, E^N(0)$ be the time being exposed, that is, $\tilde{\tau}^N_{j,0} = -\tau^N_{j,0} $ is the corresponding elapsed duration of being exposed at time zero.
For the initially infectious individual $k=1,\dots, I^N(0)$, let $\varsigma^N_{k,0}$ be the time when an initially infectious individual becomes infectious, and thus $\tilde{\varsigma}^N_{k,0}= - \varsigma^N_{k,0}$ is the elapsed time at time 0 since being infectious. 
WLOG, assume that  $0>\tau^N_{1,0}> \cdots > \tau^N_{E^N(0),0}$ (equivalently,  $0<\tilde{\tau}^N_{1,0}<\cdots <\tilde{\tau}^N_{E^N(0),0}$). Set $\tilde{\tau}^N_{0,0}=0$. Then we can write $E^N(0,x) = \max\{j \ge 0: \tilde{\tau}^N_{j,0} \le x\}$. 
Similarly, assume that $0 > \varsigma^N_{1,0}>\cdots> \varsigma^N_{I^N(0),0}$ (equivalently, $0 > \tilde{\varsigma}^N_{1,0}>\cdots> \tilde{\varsigma}^N_{I^N(0),0}$) and set $\tilde{\varsigma}^N_{0,0}=0$. Then we can write $I^N(0,x) = \max\{k \ge 0: \tilde{\varsigma}^N_{k,0} \le x\}$. We also assume that there exist constants $0 \le \bar{x}^e <\infty$ and $0 \le \bar{x} <\infty$ such that $E^N(0) = E^N(0,\bar{x}_e)$ and 
$I^N(0) = I^N(0, \bar{x})$ a.s. 

For the initially  infectious individuals, given their elapsed infection times $\tilde{\varsigma}^N_{k,0}$, $k=1,\dots, I^N(0)$, we assume that their remaining infectious times are conditional independent and have distributions dependent on their own infection ages, that is, given that the $\tilde{\varsigma}^N_{k,0}=s$, 
$\P(\eta^0_k > t| \tilde{\varsigma}^N_{k,0}=s) = \frac{F^c(t+s)}{F^c(s)}, \, t, s \ge 0.$
For the initially exposed individuals, the pairs   $\{(\xi^0_j, \eta^{E,0}_j)\}$ are assumed to be conditionally independent given the elapsed exposed times $\{\tilde{\tau}^N_{j,0}\}$, and the distribution of $(\xi^0_j, \eta^{E,0}_j)$ depends on the $\tilde{\tau}^N_{j,0}$. In particular, given that $\tilde{\tau}^N_{j,0}=s$, the joint distribution of $(\xi^0_j, \eta^{E,0}_j)$ is given  by $H_0(du,dv|s)$. 
Assume that the marginal distribution of $\xi^0_j$ given that $\tilde{\tau}^N_{j,0} =s$ is given by
$G^c(t|s) =1- G(t|s)=\P(\xi^0_j >t| \tilde{\tau}^N_{j,0}=s) = \frac{G^c(t+s)}{G^c(s)}, \, t, s\ge 0.$
and the conditional distribution of $\eta^{E,0}_j$ given $\tilde{\tau}^N_{j,0}=s$ and $\xi^0_j=u$, and given by $F_0(\cdot|u,s)=F_0(\cdot|s+u)$. Thus, $H_0(du,dv|s) = G(du|s) F_0(dv|u,s) = G(du|s) F_0(dv|u+s) $. 

Let 
\begin{align*}
 \Phi_0(t|s)&:= \int_0^t \int_0^{t-u} H_0(du, dv|s) =\int_0^t \int_0^{t-u} F_0(dv|u,s) G(du|s), 
 \\
  \Psi_0(t|s) &:= \int_0^t \int_{t-u}^\infty H_0(du, dv|s)  = \int_0^t \int_{t-u}^\infty F_0(dv| u,s) G(du|s) = G(t|s)- \Phi_0(t|s). 
\end{align*}
Similarly, with independent $\xi_j^0$ and $\eta_j^{E,0}$ given that $\tilde{\tau}^N_{j,0}=s$, we have 
\begin{align*} 
\Phi_0(t|s) = \int_0^t F_0(t-u|s)  G(du|s), \quad \Psi_0(t|s) = \int_0^t F^c_0(t-u|s) G(du|s) = G(t|s) - \Phi_0(t|s). 
\end{align*}
If, in addition, $F_0(\cdot|s) =F(\cdot)$,
then
\begin{align*} 
\Phi_0(t|s) = \int_0^t F(t-u)  G(du|s), \quad \Psi_0(t|s) = \int_0^t F^c(t-u) G(du|s) = G(t|s) - \Phi_0(t|s). 
\end{align*}

Instead of Assumption \ref{AS-SEIR-1}, we assume that 
there exist deterministic continuous nondecreasing functions $\bar{E}(0,x)$  and $\bar{I}(0,x)$ for $x\ge 0$ with $\bar{E}(0,0)=0$ and $\bar{I}(0,0)=0$ such that 
$\big(\bar{E}^{N}(0,\cdot), \bar{I}^{N}(0,\cdot)\big)  \to \big(\bar{E}(0,\cdot), \bar{I}(0,\cdot)\big)$ in $\bD^2$  in probability as $N\to\infty$. 
Then $(\bar{S}^N(0), \bar{E}^N(0), \bar{I}^N(0), \bar{R}(0) ) \to ( \bar{S}(0),\bar{E}(0),  \\
\bar{I}(0),\bar{R}(0))$ in $\RR^4_+$ in probability as $N\to \infty$, where $\bar{E}(0) = \bar{E}(0, \bar{x}_e) \in (0,1)$ and $\bar{I}(0) = \bar{I}(0, \bar{x}) \in (0,1)$, and $\bar{S}(0) + \bar{R}(0) =1 - \bar{E}(0) - \bar{I}(0)\in (0,1)$.
We can then prove the FLLN with the limit $\bar{S}(t)$ given in Theorem \ref{thm-FLLN-SEIR}, 
and the limits
\begin{align*}
\bar{E}(t) &=   \int_0^{\bar{x}_e} \frac{G^c(t+y)}{G^c(y)}  \bar{E}(0, dy)  +   \int_0^t G^c(t-s) \bar\Upsilon(s)  ds,   
\\
\bar{I}(t)  & =    \int_0^{\bar{x}} \frac{F^c(t+y)}{F^c(y)}  \bar{I}(0, dy)   +   \int_0^{\bar{x}_e} \int_{0}^t \int_{t-u}^\infty H_0(du,dv|y) \bar{E}(0,dy)  \non \\
& \qquad +    \int_0^t \Psi(t-s) \bar\Upsilon(s)  ds,  
\\
\bar{R}(t) & = \bar{R}(0)  +    \int_0^{\bar{x}} \left(1- \frac{F^c(t+y)}{F^c(y)} \right) \bar{I}(0, dy) +   \int_0^{\bar{x}_e} \int_{0}^t \int_0^{t-u} H_0(du,dv|y) \bar{E}(0,dy)  \non \\
& \qquad   +   \int_0^t \Phi(t-s) \bar\Upsilon(s)  ds . 
\end{align*} 

An FCLT can be similarly established, which we omit for brevity. 

\subsection{Multipatch $\bS\bI\bR$ model}\label{multipatch}


In order to model geographic heterogeneity,  multi-patch models have been used to study various infectious diseases.  ODE models are often used to study the dynamics of these models, arising as scaling limits of Markovian stochastic model, with exponentially distributed exposed/infectious periods and Markovian migration processes 
\cite{andersson2012stochastic,brittonpardoux,bichara2018multi,iggidr2016dynamics}. See also the relevant  models in \cite{ball2008network,magal2016final,magal2018final}.
Multi--patch non--Marvovian $\bS\bE\bI\bR$ models have recently been studied in  \cite{PP-2020b}. 
We now present the $\bS\bI\bR$ version of that model. 

The patches may refer to populations in different locations, for example, a densely populated city and a less populated rural area. 
Individuals in each patch are infected locally and from distance.
The main reason for allowing infection at distance is the following. Infectious individuals may travel from one
patch to another for work or vacation, and then return home. These roundtrip travels are hard to model as such. 
We prefer to consider the infections while away from home as infections at distance. 
The rate of infection is different in the patches (because of the differences in the 
density of population or the use of public transportations), while the law of the infectious period is the same (same illness).

Let $N$ be the total population size and $L$ be the number of patches.
For each $i=1,\dots,L$, let $S^N_i(t)$, $I^N_i(t)$ and $R^N_i(t)$ denote the numbers of individuals in patch $i$ that are susceptible, infectious and recovered at time $t$, respectively. 
Then we have the balance equation:
$$
N\;=\; \sum_{i=1}^L B_i^N(t), \quad \text{where} \quad  B_i^N(t) := S^N_i(t) + I^N_i(t) + R^N_i(t)  \,, \quad t \ge 0\,.
$$
Assume that $S^N_i(0)>0$,  $\sum_{i=1}^L I^N_i(0)>0$ and $R^N_i(0) =0$, $i=1,\dots,L$. 

Let  $\lambda_i$ be the infection rate of patch $i$, $i=1,\dots,L$.  
The instantaneous infection rate process of patch $i$ is given by, 
\begin{align*} 
\Upsilon^N_i(t) &\;=\;\frac{ \lambda_i S^N_i(t)\sum_{j=1}^L \kappa_{ij}I^N_j(t)}{N^{1-\gamma}(S^N_i(t)+I^N_i(t)+R^N_i(t))^\gamma}  \\
& = \lambda_i \Big( \frac{B_i^N}{N} \Big)^{1-\gamma} \frac{S_i^N(t)}{B_i^N(t)} \sum_{j=1}^L \kappa_{ij}I^N_j(t)
\,,\quad i =1,\dots, L\,,
\end{align*}
where  $\kappa_{ii}=1$  and $0\le \kappa_{ij} < 1$ for $i\neq j$ represent the infectivity from distance, and $0\le\gamma\le1$. 
 In the case $\gamma=1$, the rate of encounters of individuals in patch $i$ by a given infectious is given as $\lambda_i$ for an infectious of the same patch, and equal to $\lambda_i\kappa_{ii'}$ for an infectious from patch $i'$, whatever the total population in patch $i$ at time $t$ may be. This factor gets multiplied by the probability that a randomly chosen individual in patch $i$ be susceptible, which equals $S^N_i(t)/B^N_i(t)$. 
 In the case 
$\gamma=0$, the same rate is proportional to $B_i^N(t)$, the total population of patch $i$ at time $t$. In the intermediate cases, the rate lies between those two extremes. The FLLN is proved for for any value of $\gamma\in[0,1]$, and the FCLT  is only for $\gamma\in[0,1)$ in the general case, and for all $\gamma\in[0,1]$ in the case that infections are only local, \textit{i.e.},
$\kappa_{ij} =0$ for $i\neq j$. 
By convention, we shall assume that $\Upsilon^N_i(t)=0$ whenever $S^N_i(t)+I^N_i(t)+R^N_i(t)=0$ if $\gamma<1$, and the same in case $\gamma=1$, \textit{i.e.}, $\frac{0}{0}=0$ (it is of course $0$ if patch $i$ is empty). 

Let $A^N_i(t)$ be the cumulative counting process of individuals in patch $i$ that become infectious during $(0,t]$. 
Then we can give a representation of the process $A^N_i(t)$ via the standard Poisson random measure $Q_{i}$ on $\R^2_+$ (with mean measure $dsdu$): 
\begin{align}\label{An-rep-1}
A^N_i(t)\;=\;\int_0^t\int_0^\infty{\bf1}_{u\le \Upsilon^N_i(s)}Q_{i}(ds,du)\,,\quad t\ge 0\,. 
\end{align}
Equivalently, we can write 
\begin{align} \label{An-rep-2}
A^N_i(t) \;=\; A_{i,*} \left( \int_0^t  \Upsilon^N_i(s) ds \right) \,,\quad t\ge 0\,,
\end{align}
where $\{A_{1,*},\ldots,A_{L,*}\}$ are i.i.d. unit-rate Poisson processes. 
We let $\{\tau^N_{j,i},\ j\ge1\}$ denote the successive jump times of the process
$A^N_i$, for $i=1,\dots,L$. 

For the initially infected individuals,  let $\eta^0_{k,i}$, $k=1,\dots, I^N_i(0)$, denote their remaining infectious periods.  Assume that $\{\eta^0_{k,i}\}$ are independent and identically distributed (i.i.d.) with a cumulative distribution function (c.d.f.) $F_0$, for all $i,k$. 
For the newly infected individuals $A^N_i(t)$, let $\eta_{k,i}$, $k\in \N$, denote their remaining infectious periods.  Assume that $\{\eta_{k,i}\}$ are i.i.d. with a c.d.f. $F$, for all $i,k$.  Let $F^c_0=1-F_0$ and $F^c=1-F$.  It is reasonable to assume the same distribution for the infectious periods of individuals of the different patches since it is the same illness. 

Susceptible (resp. infectious, resp. removed) individuals migrate from patch $i$ to patch $j$ at rate $\nu_{S,i,j}$ (resp. at rate $\nu_{I,i,j}$, resp. at rate $\nu_{R,i,j}$)).
Let $X(t)$ denote the location (\textit{i.e.}, the patch) 
at time $t\ge 0$ of an infected individual. It is clear that 
$X(t)$ is a Markov process which alternates between states $1,\dots, L$. 
Define $p_{i,j}(t) =\P(X(t)=j|X(0)=i)$ for $i,j=1,\dots,L$ and $t \ge 0$. 
(Note that the probability is the same for any starting time, for example, $p_{i,j}(t) =\P(X(r+t)=j|X(r)=i)$ for any $r, t\ge 0$.)
%

We will use $X^{0,k}_i$ and $X^{k}_i$ to indicate the associated process for individual $k$ in patch $i$, for the initially and newly infected ones, respectively. 
Note that they are all mutually independent and have the same law as described for the process $X(t)$ above. Also note that the processes  $X^{k}_i$ start from the time becoming infected $\tau^N_{k,i}$ while the processes $X^{0,k}_i$ start from time $0$.

We now provide a representation of the epidemic evolution dynamics: 
\begin{align}
S^N_i(t)&\;=\;S^N_i(0)-A^N_i(t)
- \sum_{\ell=1,\ell \neq i}^L P_{S,i,\ell}\left(\nu_{S,i,\ell}\int_0^t S^N_i (s)ds\right)
+\sum_{\ell=1,\ell\neq i}^L P_{S,\ell,i}\left( \nu_{S,\ell,i} \int_0^tS^N_\ell(s)ds\right)\,, 
\non\\
I^N_i(t)&\;=\; \sum_{\ell=1}^L \sum_{k=1}^{I^N_\ell(0)} {\bf1}_{\eta_{k,\ell}^0>t}{\bf1}_{X^{0,k}_\ell(t)=i}
+ \sum_{\ell=1}^L \sum_{j=1}^{A^N_\ell(t)} {\bf 1}_{\tau^N_{j,\ell} + \eta_{j,\ell} >t} {\bf1}_{X^j_\ell(t-\tau^N_{j,\ell})=i}\,\,, \label{In-1-rep}
\\
R^N_i(t)&\;=\; R^N_i(0)+ \sum_{\ell=1}^L \sum_{k=1}^{I^N_\ell(0)} {\bf1}_{\eta_{k,\ell}^0\le t}{\bf1}_{X^{0,k}_\ell(\eta_{k,\ell}^0)=i}
+\sum_{\ell=1}^L \sum_{j=1}^{A^N_\ell(t)}  {\bf 1}_{\tau^N_{j,\ell} + \eta_{j,\ell} \le t} {\bf1}_{X^j_\ell( \eta_{j,\ell})=i} \non
\\& \quad - \sum_{\ell=1,\ell \neq i}^L P_{R,i,\ell}\left(\nu_{R,i,\ell}\int_0^t R^N_i (s)ds\right)
+\sum_{\ell=1,\ell\neq i}^L P_{R,\ell,i}\left( \nu_{R,\ell,i} \int_0^t R^N_\ell(s)ds\right) \,,\non 
\end{align}
where $P_{S,i,\ell}, P_{R,i,\ell}$ , $i,\ell=1,\dots,L$, are all unit-rate Poisson processes, mutually independent, and also independent of $P_{A,i}$. 
Here, the first term in $I^N_i(t)$ represents the number of initially infected individuals from patch $\ell=1,\dots,L$ that remain infected and are in patch $i$ at time $t$, and the second term represents the number of newly infected individuals from patch $\ell=1,\dots,L$ that remain infected and are in patch $i$ at time $t$. The first term in $R^N_i(t)$ represents the 
number of initially infected individuals from patch $\ell=1,\dots,L$ that have recovered by time $t$ and were in patch $i$ at the time of recovery,  and the second term represents the number of newly infected individuals from patch $\ell=1,\dots,L$ that have recovered by time $t$, and were in patch $i$ at the time of recovery. 

It is not easy to take the limit as $N\to\infty$ in the formula \eqref{In-1-rep}. We give another representation of the process $I^N_i(t)$. 
\begin{lemma} \label{lem-I1-2-rep}
We have
\begin{align*} 
I^N_i(t)&\;=\; I^N_i(0)  + A^N_i(t)  -   \sum_{\ell=1}^L \sum_{k=1}^{I^N_\ell(0)} {\bf1}_{\eta_{k,\ell}^0\le t}{\bf1}_{X^{0,k}_\ell(\eta_{k,\ell}^0)=i} 
- \sum_{\ell=1}^L \sum_{j=1}^{A^N_\ell(t)}  {\bf 1}_{\tau^N_{j,\ell} + \eta_{j,\ell} \le t} {\bf1}_{X^j_\ell( \eta_{j,\ell})=i} \non
\\& \quad - \sum_{\ell \neq i} P_{I,i,\ell}\left(\nu_{I,i,\ell}\int_0^t I^N_i (s)ds\right)
+\sum_{\ell\neq i} P_{I,\ell,i}\left( \nu_{I,\ell,i} \int_0^t I^N_\ell(s)ds\right) \,,
\end{align*}
where $P_{I,i,j}$, $i,j=1,\dots,L$, are all unit-rate Poisson processes, mutually independent, and also independent of $P_{A,i}$, $P_{S,i,j}$ and  $P_{R,i,j}$. 
\end{lemma}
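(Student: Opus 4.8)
The plan is to read the claimed identity as a jump-by-jump balance equation for the pure-jump process $I^N_i$ and then to recognise the aggregate migration flows as time-changed Poisson processes. First I would note that the right-hand side of \eqref{In-1-rep} and the asserted expression are both right-continuous step processes starting from the common value $I^N_i(0)$: evaluating \eqref{In-1-rep} at $t=0$ leaves only the initially infectious individuals whose location is $i$, which is $I^N_i(0)$. It therefore suffices to check that the two expressions undergo the same unit jumps at the same epochs. Inspecting \eqref{In-1-rep}, the value of $I^N_i$ changes in exactly four ways: a $+1$ when a susceptible of patch $i$ is newly infected (the attached location process starts at $i$, i.e. $X^j_i(0)=i$); a $-1$ at each recovery of an infectious individual that is located in $i$ at its recovery epoch; a $+1$ when an infectious individual migrates into $i$; and a $-1$ when one migrates out of $i$.

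I would then match each of these four flows with a term in the lemma. The new-infection inflow is, by the definition \eqref{An-rep-1}, exactly $A^N_i(t)$. The recovery outflow is governed by the indicator $\bone_{\tau+\eta\le t}\bone_{X(\eta)=i}$: as $t$ crosses the recovery epoch $\tau^N_{j,\ell}+\eta_{j,\ell}$ (resp. $\eta^0_{k,\ell}$) the elapsed infection age equals $\eta_{j,\ell}$ (resp. $\eta^0_{k,\ell}$), so the recovering individual sits in patch $X^j_\ell(\eta_{j,\ell})$ (resp. $X^{0,k}_\ell(\eta^0_{k,\ell})$); summing over all individuals reproduces precisely the two recovery sums carried with a minus sign. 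This identification of the infection and recovery terms is deterministic bookkeeping once the individual location processes are given, and it uses the very same processes $X^{0,k}_\ell,X^j_\ell$ as in \eqref{In-1-rep}.

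The substantive step is the representation of the two migration flows. Each of the $I^N_i(s)$ infectious individuals present in patch $i$ at time $s$ attempts an $i\to\ell$ move at rate $\nu_{I,i,\ell}$, independently of the others, so the superposed counting process of $i\to\ell$ infectious migrations is a simple point process whose intensity, with respect to the natural history of the system, is $\nu_{I,i,\ell}I^N_i(s)$. By a random time-change theorem for counting processes, this flow equals $P_{I,i,\ell}\bigl(\nu_{I,i,\ell}\int_0^t I^N_i(s)\,ds\bigr)$ for a unit-rate Poisson process $P_{I,i,\ell}$, and symmetrically the inflow equals $P_{I,\ell,i}\bigl(\nu_{I,\ell,i}\int_0^t I^N_\ell(s)\,ds\bigr)$. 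To obtain the stated mutual independence of the $P_{I,i,j}$ and their independence of $P_{A,i},P_{S,i,j},P_{R,i,j}$, it is cleanest to build the whole system on one probability space from independent primitive clocks---one family driving infections, and a separate family driving each type of inter-patch move in each compartment. Then the infectious-migration clocks are independent of the infection and of the susceptible/removed migration mechanisms by construction, distinct ordered pairs $(i,\ell)$ never jump simultaneously, and the time-changed processes inherit both properties.

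I expect the main obstacle to be exactly this last step. The migration intensity $\nu_{I,i,\ell}I^N_i(s)$ is itself driven by the process being represented, so one cannot simply invoke a fixed Poisson input; instead one must argue at the level of the compensated point processes (or the associated martingale problem) that the superposition of the individual Markov location chains produces these state-dependent time changes, and, simultaneously, that the recovered unit-rate Poisson processes are jointly independent and independent of the remaining drivers. A secondary subtlety, worth checking explicitly, is consistency of the hybrid representation: the same location processes $X^{0,k}_\ell,X^j_\ell$ that generate the flows $P_{I,i,j}$ must be the ones evaluated at the recovery epochs in the retained recovery sums, so that no double counting occurs on the common probability space.
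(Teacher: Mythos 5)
Your proof is correct and follows essentially the same route as the paper, which justifies the identity by the same patch-level bookkeeping (initial count, plus new infections in $i$, minus recoveries occurring while located in $i$, plus/minus infectious migrations) and represents the aggregate migration flows as unit-rate Poisson processes time-changed by $\nu_{I,i,\ell}\int_0^t I^N_i(s)\,ds$. Your additional care about the state-dependent time change and about using the same location processes $X^{0,k}_\ell, X^j_\ell$ in both the migration counts and the recovery indicators is exactly the right place to be careful, and matches the construction in the underlying reference.
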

Let us comment on this formula. The first term counts the number of initially infectious individuals in patch $i$, and the second term adds the number of those who get infected in patch $i$ on the time interval $(0,t]$.
The last two terms describe the movements of infectious individuals out of $i$, and into $i$. The third terms subtracts  the number of individuals initially infected in any patch, who have recovered before time $t$ in patch $i$, and the fourth term subtracts the number of individuals infected on the time interval $(0,t]$ in any patch, who have recovered before time $t$ in patch $i$.

Define a PRM 
$\widetilde{Q}_{\ell}(ds,du, dv, d\theta)$ on $\R_+^3\times\{1,\dots,L\}$, which is the sum of the Dirac masses at the points  
$(\tau_{j,\ell}^N, U^N_{j,\ell}, \eta_{j,i}, X_\ell^j(\eta_{j,\ell}))$ with mean measure $ds\times du\times F(dv)\times \mu_\ell(v,d\theta)$, where
for each $v>0$, $\mu_\ell(v,\{\ell'\})=p_{\ell,\ell'}(v)$, and an infection occurs at time $\tau_{j,\ell}^N$ in case $U^N_{j,\ell}\le \Upsilon^N_\ell(\tau^N_{j,\ell})$. 

We can then write for $\ell, \ell' =1,\dots,L$,
\begin{align*} 
 \sum_{j=1}^{A^N_\ell(t)}  {\bf 1}_{\tau^N_{j,\ell} + \eta_{j,\ell} \le t} {\bf1}_{X^j_\ell( \eta_{j,\ell})=\ell'} 
 \;=\; \int_0^t \int_0^\infty \int_0^{t-s}\int_{\{\ell'\}} {\bf 1}_{u \le \Upsilon^N_\ell(s)} \widetilde{Q}_{\ell,inf}(ds,du, dv, d\theta). 
\end{align*}

\begin{remark}
 If we set some migration rates to zero, then the corresponding patches could be considered as sub-groups (like age groups) of the population, which  interact and infect one another. 
\end{remark}

For any process $Z^N = S^N_i, I^N_i$, or $R^N_i$, $i=1,\dots,L$, let $\bar{Z}^N:=N^{-1}Z^N$.   

\begin{assumption} \label{AS-FLLN}
There exist constants $0<\S_i(0)\le1$, $0\le\I_i(0)<1$, $0\le \rR_i(0)\le1$ with $\sum_{i=1}^L\I_i(0)>0$ such that  $\sum_{i=1}^L (\S_i(0)+ \I_i(0)+\rR_i(0)) =1$ and  $(\bar{S}^N_i(0), \bar{I}^N_i(0),\bar{R}^N_i(0)\, i=1,\dots, L)   \to (\bar{S}_i(0), \bar{I}_i(0),\bar{R}_i(0),\, i=1,\dots,L)$ in probability in $\R^{3L}$ as $N\to\infty$. In addition, assume that $F_0$ is continuous.
\end{assumption}

\begin{theorem} \label{thm-FLLN} 
Under Assumption~\ref{AS-FLLN}, 
 \begin{equation} \label{eqn-FLLN-conv}
 (\S^N_i,\I^N_i,\rR^N_i,\, i=1,\dots,L)\;\to\;(\S_i,\I_i,\rR_i,\, i=1,\dots,L) \qinq \bD^{3L} \qasq N \to \infty\,, 
 \end{equation}
in probability, locally uniformly  on $[0,T]$, where $(\S_i(t),\I_i(t),\rR_i(t),\, i=1,\dots,L) \in \bC^{3L}$ is the unique  solution to the following set of  deterministic integral equations: 
 \begin{align}
 \S_i(t) &\;=\; \S_i(0) -  \int_0^t \bar\Upsilon_i(s) ds  +   \sum_{\ell=1,\ell \neq i}^L\int_0^t \left( \nu_{S,\ell,i}  \S_\ell(s)-  \nu_{S,i,\ell}\S_i(s)  \right) ds\,, \non \\ 
\I_i(t) &\;=\;  \I_i(0) -  \int_0^t   \sum_{\ell=1}^L \I_\ell(0)  p_{\ell,i}(s) F_0(ds)  +   \int_0^t  \bar\Upsilon_i(s) ds  \non \\
 & \quad  -  \int_0^t  \sum_{\ell=1}^L  \left(  \int_0^{t-s} p_{\ell,i}(u) F(du)  \right)  \bar\Upsilon_\ell(s) ds +  \sum_{\ell \neq i} \int_0^t \left( \nu_{I,\ell,i}  \I_\ell(s)-  \nu_{I,i,\ell}\I_i(s)  \right) ds \,, \label{eqn-barIi}\\
\rR_i(t) &\;=\; \rR_i(0)+\int_0^t   \sum_{\ell=1}^L \I_\ell(0) p_{\ell,i}(s) F_0(ds)  +  \int_0^t  \sum_{\ell}  \left(  \int_0^{t-s} p_{\ell,i}(u) F(du)  \right)    \bar\Upsilon_\ell(s) ds \non\\ 
 & \quad +  \sum_{\ell=1,\ell\neq i}^L  \int_0^t \left( \nu_{R,\ell,i}  \rR_\ell(s)-  \nu_{R,i,\ell}\rR_i(s)  \right) ds\, , \non 
 \end{align}
with $\bar\Upsilon_{i}$ defined by
\begin{align*} 
\bar\Upsilon_i(t)\;=\;\frac{ \lambda_i\S_i(t)\sum_{j=1}^L\kappa_{ij}\I_j(t)}{(\S_i(t)+\I_i(t)+\rR_i(t))^\gamma} = \lambda_i \frac{\S_i(t)}{\bar{B}_i(t)^\gamma} \sum_{j=1}^L\kappa_{ij}\I_j(t)\,. 
\end{align*}
Here $\bar{B}_i(t):=\S_i(t)+\I_i(t)+\rR_i(t)$. 
\end{theorem}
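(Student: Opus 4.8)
The plan is to run the single--patch argument behind Theorem~\ref{thm-FLLN-SIR} for all $L$ patches simultaneously, treating the migration and location bookkeeping as extra additive terms. First I would record the uniform bound $0\le\bar\Upsilon^N_i(t)\le\lambda_i$: writing $n^N_i=S^N_i+I^N_i+R^N_i$, one has $\sum_j\kappa_{ij}\bar{I}^N_j\le 1$ and $\bar{S}^N_i(N/n^N_i)^\gamma\le(n^N_i/N)^{1-\gamma}\le 1$. With this bound, decompose $\bar{A}^N_i(t)=\int_0^t\bar\Upsilon^N_i(s)\,ds+\bar{M}^N_{A,i}(t)$; the drift is Lipschitz with constant $\lambda_i$, hence the family is equicontinuous and tight in $\bC$, while the martingale satisfies $\E[(\bar{M}^N_{A,i}(t))^2]\le\lambda_i t/N$ and vanishes by Doob's inequality. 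The migration terms $N^{-1}P_{\cdot,i,\ell}(\nu_{\cdot,i,\ell}\int_0^t Z^N_i(s)\,ds)$ (with $Z^N_i$ one of $S^N_i,I^N_i,R^N_i$) are handled identically, their fluid scalings converging to the Lipschitz compensators $\nu_{\cdot,i,\ell}\int_0^t\bar{Z}_i(s)\,ds$ by the law of large numbers for the Poisson process used in Theorem~\ref{th:MSIR-LLN}. One then extracts a joint subsequential limit $(\bar{A}_i,\bar{S}_i,\bar{I}_i,\bar{R}_i)_{i}$.

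The core of the work is passing to the limit in the recovery--with--location terms of Lemma~\ref{lem-I1-2-rep}. For the initially infected contribution $N^{-1}\sum_\ell\sum_k\bone_{\eta^0_{k,\ell}\le t}\bone_{X^{0,k}_\ell(\eta^0_{k,\ell})=i}$, the pairs $(\eta^0_{k,\ell},X^{0,k}_\ell)$ are i.i.d.\ with $\E[\bone_{\eta^0\le t}\bone_{X^0_\ell(\eta^0)=i}]=\int_0^t p_{\ell,i}(u)\,F_0(du)$ (condition on $\eta^0=u$ and use $p_{\ell,i}(u)=\P(X(u)=i\mid X(0)=\ell)$), so the classical LLN (as in Theorem~14.3 of \cite{billingsley}) yields convergence to $\sum_\ell\bar{I}_\ell(0)\int_0^t p_{\ell,i}(u)\,F_0(du)$, matching \eqref{eqn-barIi}. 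For the newly infected contribution I would use the PRM $\widetilde{Q}_\ell$ with mean measure $ds\,du\,F(dv)\,\mu_\ell(v,d\theta)$: its compensator gives
\[
\breve H^N_{\ell,i}(t):=\frac1N\sum_{j=1}^{A^N_\ell(t)}\int_0^{t-\tau^N_{j,\ell}}p_{\ell,i}(u)\,F(du)=\int_0^t\Big(\int_0^{t-s}p_{\ell,i}(u)\,F(du)\Big)d\bar{A}^N_\ell(s),
\]
which, along the subsequence with $\bar{A}^N_\ell\RA\bar{A}_\ell$, converges by the extended Portmanteau lemma (Lemma~4.4 of \cite{FPP2020b}) to $\int_0^t\big(\int_0^{t-s}p_{\ell,i}(u)F(du)\big)\,d\bar{A}_\ell(s)$. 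The residual fluctuation $V^N_{\ell,i}(t)=N^{-1}\sum_j\kappa^N_{j,\ell}(t)$, with $\kappa^N_{j,\ell}(t)=\bone_{\tau^N_{j,\ell}+\eta_{j,\ell}\le t}\bone_{X^j_\ell(\eta_{j,\ell})=i}-\int_0^{t-\tau^N_{j,\ell}}p_{\ell,i}(u)F(du)$, has conditionally uncorrelated mean--zero summands, so $\E[(V^N_{\ell,i}(t))^2\mid\sF^N_t]\le\bar{A}^N_\ell(t)/N$ and hence $\E[(V^N_{\ell,i}(t))^2]\le\lambda_\ell t/N\to0$; this is upgraded to local uniform convergence in probability exactly as in the proof of Lemma~5.2 of \cite{PP-2020}.

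Combining these ingredients, I would pass to the limit term by term in the representation of Lemma~\ref{lem-I1-2-rep} and in the analogous expressions for $S^N_i$ and $R^N_i$, using the continuity of the map $(\bar{S}_i,\bar{I}_i,\bar{R}_i)\mapsto\bar\Upsilon_i$ to identify $\int_0^t\bar\Upsilon^N_i\,ds\to\int_0^t\bar\Upsilon_i\,ds$; this produces the candidate system. Finally, I would show the coupled Volterra system has a unique solution on each $[0,T]$ by a Picard/Gronwall fixed--point argument, using the a priori bounds $0\le\bar{S}_i,\bar{I}_i,\bar{R}_i\le1$ together with the boundedness of $\bar\Upsilon_i$ and of the migration coefficients. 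Uniqueness forces every subsequential limit to coincide, upgrading the subsequential convergence to full convergence; since the limit is deterministic, the convergence holds in probability.

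I expect the main obstacle to be twofold. First, controlling the newly infected recovery--location term, where the infectious duration $\eta$ and the terminal location $X(\eta)$ are coupled and must be encoded faithfully through $\widetilde{Q}_\ell$ before the compensator identity and the $L^2$ orthogonality can be exploited. Second, passing to the limit through the nonlinear denominator $(\bar{S}_i+\bar{I}_i+\bar{R}_i)^\gamma$ when $\gamma\in(0,1]$, which requires ruling out degeneracy of the total patch mass (or invoking the convention $0/0=0$) to guarantee continuity of $\bar\Upsilon_i$ and well--posedness of the limiting system.
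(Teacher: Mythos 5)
Your proposal is correct and follows essentially the same route as the paper: the survey refers the proof of Theorem~\ref{thm-FLLN} to \cite{PP-2020b}, and the argument there is precisely the single-patch scheme of Theorem~\ref{thm-FLLN-SIR} (martingale decomposition of $\bar{A}^N_i$, LLN for the initially infected, conditional-expectation/compensator plus $L^2$-orthogonality for the newly infected, then uniqueness of the limiting Volterra system) augmented with the location bookkeeping via $p_{\ell,i}$ and the PRMs $\widetilde{Q}_\ell$, exactly as you describe. Your identification of the two delicate points — the coupling of $(\eta, X(\eta))$ in the recovery-location term and the non-degeneracy of the patch mass in the denominator when $\gamma>0$ — matches where the real work lies.
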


For any process $Z^N$,  let $\hat{Z}^N:=\sqrt{N} ( \bar{Z}^N - \bar{Z})$ be the diffusion-scaled process where $ \bar{Z}^N$ is the fluid-scaled process and $\bar{Z}$ is its limit.   

\begin{assumption} \label{AS-FCLT}
There exist constants $0<\S_i(0)\le1$, $0\le\I_i(0)<1$ , $0\le \rR_i(0)<1$ with $\sum_{i=1}^L\I_i(0)>0$ such that  $\sum_{i=1}^L (\S_i(0)+ \I_i(0)+\rR_i(0)) =1$,   and random variables $\hat{S}_i(0)$, $\hat{I}_i(0)$ and $\hat{R}_i(0)$, $i=1,\dots,L$, such that 
 $(\hat{S}^N_i(0), \hat{I}^N_i(0), \hat{R}^N_i(0),\, i=1,\dots,L) \;\RA\; (\hat{S}_i(0), \hat{I}_i(0),\hat{R}_i(0),\, i=1,\dots,L) $ in $\R^{3L}$ as $N\to\infty$. In addition,  for $i=1,\ldots,L$,
 \[\sup_N\E\left[(\hat{S}^N_i(0))^2+(\hat{I}^N_i(0))^2+(\hat{R}^N_i(0))^2\right]<\infty\,.\]
\end{assumption}

\begin{theorem} \label{thm-FCLT} 
Under Assumption ~\ref{AS-FCLT},  in the two cases (i) $\gamma\in[0,1)$ or (ii)  $\gamma\in[0,1]$ and $\sum_{j\not=1}\kappa_{ij}=0$, 
 \begin{equation*} 
 (\hat{S}^N_i,\hat{I}^N_i,\hat{R}^N_i,\, i=1,\dots,L)\;\to\;(\hat{S}_i(t),\hat{I}_i(t),\hat{R}_i(t),\, i=1,\dots,L) \qinq \bD^{3L} \qasq N \to \infty, 
 \end{equation*}
where the limits are the unique solution to the following set of stochastic Volterra  integral equations driven by Gaussian processes: 
 \begin{align*} 
\hat{S}_i(t) &\;=\; \hat{S}_i(0) -   \int_0^t \hat{\Upsilon}_i(s) ds + \sum_{\ell=1, \ell\neq i}^L \int_0^t (\nu_{S,\ell,i}\hat{S}_\ell(s)- \nu_{S,i, \ell}\hat{S}_i(s)) ds \non\\
& \qquad - \hat{M}_{A,i}(t)  +\sum_{\ell=1, \ell\neq i}^L \big( \hat{M}_{S,\ell,i}(t) - \hat{M}_{S,i,\ell}(t) \big)\,,
\end{align*}
\begin{align} \label{eqn-hatI-1}
\hat{I}_i(t) &\;=\; \hat{I}_i(0)  \bigg( 1- \sum_{\ell=1}^L\int_0^t  p_{\ell,i}(s) F_0(ds) \bigg) + \int_0^t \hat{\Upsilon}_i(s) ds 
 - \sum_{\ell=1}^L \int_0^t \int_0^{t-s} p_{\ell,i}(u) F(du)  \hat\Upsilon_\ell(s) ds \non\\
 & \quad  + \sum_{\ell=1, \ell\neq i}^L \int_0^t (\nu_{I,\ell,i}\hat{I}_\ell(s)- \nu_{I,i, \ell}\hat{I}_i(s)) ds  -\sum_{\ell=1}^L \big( \hat{I}^{0}_{\ell,i}(t) + \hat{I}_{\ell,i}(t) \big)    \non\\
& \quad + \hat{M}_{A,i}(t)+  \sum_{\ell=1, \ell \neq i}^L \big( \hat{M}_{I,\ell,i}(t) - \hat{M}_{I,i,\ell}(t) \big) \,, 
\end{align}
\begin{align*} 
\hat{R}_i(t) &\;=\;  \hat{R}_i(0)+\hat{I}_i(0)  \sum_{\ell=1}^L\int_0^t  p_{\ell,i}(s) F_0(ds)  + 
\sum_{\ell=1}^L  \int_0^t \int_0^{t-s} p_{\ell,i}(u) F(du)  \hat\Upsilon_\ell(s) ds  \non\\
& \quad  +  \sum_{\ell=1, \ell\neq i}^L \int_0^t (\nu_{R,\ell,i}\hat{R}_\ell(s)- \nu_{R,i, \ell}\hat{R}_i(s)) ds  \non\\
& \quad + \sum_{\ell=1}^L \big( \hat{I}^{0}_{\ell,i}(t) + \hat{I}_{\ell,i}(t) \big)  +   \sum_{\ell=1, \ell \neq i}^L \big( \hat{M}_{R,\ell,i}(t) - \hat{M}_{R,i,\ell}(t) \big)\,. 
\end{align*}
Here, with the notation $\bar{I}_{(i)}(t)=\sum_{j=1}^L \kappa_{ij}\bar{I}_j(t)$ and $\bar{B}_i(t):=\S_i(t)+\I_i(t)+\rR_i(t)$, 
\begin{align*} 
\hat{\Upsilon}_i(t) &=\frac{\lambda_i}{\bar{B}_i(t)^{(1+\gamma)}} \Big( [(1-\gamma)\bar{S}_i(t)\!+\!\bar{I}_i(t)\!+\! \bar{R}_i(t)] \bar{I}_{(i)}(t) \hat{S}_i(t)\!  \non\\
& \quad+\!
\left[\bar{S}_i(t)(\S_i(t)\!+\!\I_i(t)\!+\!\rR_i(t))\!-\!\gamma\bar{S}_i(t)\bar{I}_{(i)}(t)\right]\hat{I}_i(t) 
\!-\! \gamma\bar{S}_i(t) \bar{I}_{(i)}(t) \hat{R}_i(t) \Big) +\frac{\bar{S}_i(t)\sum_{j\not=i}\kappa_{ij}\hat{I}_j(t)}{\bar{B}_i(t)^\gamma}\,, 
\end{align*}
\begin{align*}
&\hat{M}_{A,i}(t) \;=\; B_{A,i} \left(  \int_0^t   \bar\Upsilon_i(s) ds\right)\,,
\quad
\hat{M}_{S,i,j}(t) \;=\; B_{S,i,j}\left(\nu_{S,i,j} \int_0^t \bar{S}_i(s)ds\right)\,, \\
&
\hat{M}_{I,i,j}(t) \; =\; B_{I,i,j}\left(\nu_{I,i,j} \int_0^t \bar{I}_i(s)ds\right)\,, \quad 
\hat{M}_{R,i,j}(t)\; =\; B_{R,i,j}\left(\nu_{R,i,j} \int_0^t \bar{R}_i(s)ds\right)\,, \quad i \neq j\,,
\end{align*}
with $B_{A,i}$, $B_{S,i,j}$,  $B_{I,i,j}$, $B_{R,i,j}$ being mutually independent standard Brownian motions, and with the deterministic functions $\bar{S}_i, \bar{I}_i, \bar{R}_i$ being the limits in Theorem~\ref{thm-FLLN}. 
The processes $ \hat{I}^{0}_{i,j}$ and  $\hat{I}_{i,j}$ are continuous Gaussian processes with mean zero and covariance functions: 
\begin{align*}
Cov( \hat{I}^0_{i,j}(t),  \hat{I}^0_{i',j'}(t') ) &\;=\; \begin{cases}
\I_i(0)  \Big(\int_0^{t\wedge t'}  p_{i,j}(s) F_0(ds) -  \int_0^t  p_{i,j}(s) F_0(ds)  \int_0^{t'}  p_{i,j}(s) F_0(ds)\Big)\,,   &\text{if}\quad   i=i',j=j',\\
 0\,,  & otherwise,
\end{cases}
\end{align*}
\begin{align*}
Cov( \hat{I}_{i,j}(t),  \hat{I}_{i',j'}(t') ) &\;=\; \begin{cases}
 \int_0^{t\wedge t'} \int_0^{t\wedge t'-s} p_{i,j}(u) F(du)  \bar\Upsilon_i(s) ds\,,   &\text{if}\quad   i=i'\,, \, j=j'\,,\\
 0\,,  & otherwise.
\end{cases}
\end{align*}
In addition, $ \hat{I}^{0}_{i,j}$ and  $\hat{I}_{i,j}$ are independent, and also independent of the Brownian terms. 
\end{theorem}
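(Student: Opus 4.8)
The plan is to transfer the single--patch argument behind Theorem~\ref{thm-FCLT-SIR} to the network setting, the genuinely new ingredients being the migration dynamics and the location marks $X_\ell^j(\eta_{j,\ell})$. Throughout I would work from the representation of $(S_i^N,R_i^N)$ through the Poisson random measures $Q_i$ and the migration processes $P_{\cdot,i,\ell}$, and from the representation of $I_i^N$ supplied by Lemma~\ref{lem-I1-2-rep}, which is far better suited to the scaling limit because it writes $I_i^N$ as the initial count plus $A_i^N$, minus the recovery counts and the net migration flux. Subtracting the deterministic limit of Theorem~\ref{thm-FLLN} and multiplying by $\sqrt N$, the task is to identify the Gaussian forcing terms $\hat M_{A,i}$, $\hat M_{\cdot,i,\ell}$, $\hat I^0_{\ell,i}$ and $\hat I_{\ell,i}$ and to show that the scaled vector solves the stated linear stochastic Volterra system.

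First I would dispose of the purely Poissonian fluctuations. Centering $Q_i$ and each migration process, scaling by $N^{-1/2}$ and applying Lemma~\ref{le:cltPoisson} together with random time changes by the (continuous, increasing) fluid limits of the corresponding occupation integrals produces the time--changed Brownian motions $\hat M_{A,i}(t)=B_{A,i}(\int_0^t\bar\Upsilon_i(s)\,ds)$ and $\hat M_{S,i,\ell},\hat M_{I,i,\ell},\hat M_{R,i,\ell}$; their mutual independence is inherited from that of the driving processes, and Theorem~\ref{thm-FLLN} justifies replacing the random time changes by their deterministic limits.

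Next come the two empirical--process terms, which carry the non--Markovian content. For the initially infected individuals the recovery/location counts $\sum_k \bone_{\eta_{k,\ell}^0\le t}\bone_{X_\ell^{0,k}(\eta_{k,\ell}^0)=i}$ are, conditionally on the location trajectories, sums of i.i.d. indicators with about $N\I_\ell(0)$ summands, and a Donsker--type functional CLT (exactly as for $\hat I_0,\hat R_0$ in Theorem~\ref{thm-FCLT-SIR}) yields the continuous Gaussian fields $\hat I^0_{\ell,i}$ with the stated covariance. The harder term counts newly infected individuals who recover in patch $i$; here I would use the marked Poisson random measure $\widetilde Q_\ell$ with mean measure $ds\,du\,F(dv)\,\mu_\ell(v,d\theta)$, write the count as the integral of $\bone_{u\le\Upsilon_\ell^N(s)}\bone_{v\le t-s}\bone_{\theta=i}$ against $\widetilde Q_\ell$, and subtract its compensator. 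The compensator, once scaled, contributes the Volterra drift $\int_0^t\int_0^{t-s}p_{\ell,i}(u)F(du)\,\hat\Upsilon_\ell(s)\,ds$, while the compensated measure, scaled by $N^{-1/2}$, converges by a martingale--measure CLT to the white--noise--driven field $\hat I_{\ell,i}$; replacing $\Upsilon_\ell^N$ by $\bar\Upsilon_\ell$ inside the variance (justified by the FLLN) reproduces the stated covariance. Care is needed here because $\hat M_{A,i}$ and $\hat I_{\ell,i}$ are built from the same infection times, so their joint law must be identified carefully through the common compensator rather than term by term.

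It then remains to linearize the infection rate: writing $\hat\Upsilon_i^N=\sqrt N(\bar\Upsilon_i^N-\bar\Upsilon_i)$ and Taylor--expanding the ratio $\lambda_i\bar S_i\sum_j\kappa_{ij}\bar I_j/(\bar S_i+\bar I_i+\bar R_i)^\gamma$ about the fluid limit produces exactly the expression for $\hat\Upsilon_i$, the remainder being $o_P(1)$ locally uniformly by Theorem~\ref{thm-FLLN}. Substituting all the pieces, the scaled vector satisfies the claimed linear Volterra system up to negligible remainders; tightness in $\bD^{3L}$ follows from Proposition~\ref{pro:tightD} for the martingale parts and from second--moment bounds (in the spirit of $\E[(V^N)^2]\le\lambda t/N$ in the proof of Theorem~\ref{thm-FLLN-SIR}) for the empirical parts, and uniqueness of the limiting linear system, via a Gronwall estimate on its locally bounded kernel, upgrades subsequential convergence to convergence of the whole sequence and gives Gaussianity when the initial data are Gaussian. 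The main obstacle is precisely the newly--infected recovery/location term together with the linearization when $\gamma=1$: the former is delicate because the count has a random number of summands $A_\ell^N$ generated by a history--dependent intensity $\Upsilon_\ell^N$, while each summand carries a location mark $X_\ell^j(\eta_{j,\ell})$ that is itself a random functional of the infectious duration, so proving the joint white--noise limit and correctly apportioning the shared infection--time randomness between the drift, $\hat M_{A,i}$ and $\hat I_{\ell,i}$ is the technical heart; and the restriction to $\gamma\in[0,1)$ in the presence of cross--patch infection enters at the linearization step, where the $\sqrt N$ migration fluctuations of the patch totals $\S_i^N+\I_i^N+\rR_i^N$ feed through the denominator $(\cdot)^\gamma$, a coupling that is no longer controlled uniformly at $\gamma=1$ unless the patches decouple through purely local infections.
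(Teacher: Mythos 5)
Your overall architecture is the right one and matches the strategy this paper uses for its single--patch analogues (the survey itself defers the proof of Theorem~\ref{thm-FCLT} to \cite{PP-2020b}): start from the representation in Lemma~\ref{lem-I1-2-rep}, extract time--changed Brownian motions from the compensated Poisson infection and migration terms via Lemma~\ref{le:cltPoisson} and the FLLN, treat the recovery--with--location counts through the marked measures $\widetilde{Q}_\ell$, linearize $\bar{\Upsilon}^N_i$ around the fluid limit, and close with tightness plus uniqueness of the limiting linear Volterra system. Your identification of where the restriction to $\gamma\in[0,1)$ (versus $\gamma=1$ with purely local infection) enters, namely the linearization of the denominator $(\bar{S}^N_i+\bar{I}^N_i+\bar{R}^N_i)^\gamma$, is also correct.

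Two steps need repair. First, your tightness argument for the empirical terms would fail as written: the bound $\E[(V^N(t))^2]\le \lambda t/N$ from the proof of Theorem~\ref{thm-FLLN-SIR} only shows that the \emph{unscaled} remainder vanishes, whereas after multiplication by $\sqrt{N}$ the pre--limits of $\hat{I}^0_{\ell,i}$ and $\hat{I}_{\ell,i}$ have variance of order one, so pointwise second moments give no compactness in $\bD$; and these terms are not semimartingales in a useful filtration, so Proposition~\ref{pro:tightD} does not apply to them either. One needs increment bounds of Billingsley type, $\E\big[|X^N(t)-X^N(t_1)|^2\,|X^N(t_2)-X^N(t)|^2\big]\le C\,(\Phi(t_2)-\Phi(t_1))^{1+\delta}$, obtained from the conditional independence of the marks $(\eta_{j,\ell},X^j_\ell(\eta_{j,\ell}))$ given the infection times; this is precisely the ``more delicate technique'' the appendix warns is needed for non--Markov processes. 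Second, you correctly flag that $\hat{M}_{A,\ell}$ and $\hat{I}_{\ell,i}$ are built from the same infection times, but you leave the apportionment unresolved, and this is a missing idea rather than a detail: the resolution is the conditional--expectation decomposition already used in the FLLN proof (the split $I^N_1=\breve{I}^N_1+V^N$), i.e.\ project the recovery--with--location count onto the filtration generated by the infection times, so that the projection produces the Volterra drift together with the contribution carried by $\hat{M}_{A,\ell}$, while the conditionally centered remainder converges to a Gaussian process independent of everything driven by the infection times. Carrying out that splitting is what yields the independence structure asserted in the theorem, and it is the step your sketch identifies as the technical heart but does not actually perform.
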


%

\begin{remark} \label{rem-SIS}
The analysis can be easily extended to the multi-patch SIS model, where the population  in each patch has susceptible and infectious groups, and when infectious individuals recover, they become susceptible immediately. 
The epidemic evolution dynamics is described as
\begin{align*}
S^N_i(t)&\;=\;S^N_i(0)-A^N_i(t)  + \sum_{\ell=1}^L \sum_{k=1}^{I^N_\ell(0)} {\bf1}_{\eta_{k,\ell}^0\le t}{\bf1}_{X^{0,k}_\ell(\eta_{k,\ell}^0)=i}
+\sum_{\ell=1}^L \sum_{j=1}^{A^N_\ell(t)}  {\bf 1}_{\tau^N_{j,\ell} + \eta_{j,\ell} \le t} {\bf1}_{X^j_\ell( \eta_{j,\ell})=i} \non\\
& \quad - \sum_{\ell=1,\ell \neq i}^L P_{S,i,\ell}\left(\nu_{S,i,\ell}\int_0^t S^N_i (s)ds\right)
+\sum_{\ell=1,\ell\neq i}^L P_{S,\ell,i}\left( \nu_{S,\ell,i} \int_0^tS^N_i(s)ds\right) \,,\\ 
I^N_i(t)&\;=\; \sum_{\ell=1}^L \sum_{k=1}^{I^N_\ell(0)} {\bf1}_{t<\eta_{k,\ell}^0}{\bf1}_{X^{0,k}_\ell(t)=i}
+ \sum_{\ell=1}^L \sum_{j=1}^{A^N_\ell(t)} {\bf 1}_{\tau^N_{j,\ell} + \eta_{j,\ell} >t} {\bf1}_{X^j_\ell(t-\tau^N_{j,\ell})=i} \,\,, 
\end{align*}
where $A^n_i$ is given as in \eqref{An-rep-1} with $
\Upsilon^N_i(t)=\frac{ \lambda_i S^N_i(t)\sum_{j=1}^L\kappa_{ij}I^N_j(t)}{(S^N_i(t)+I^N_i(t))^\gamma}$, for $ i =1,\dots, L.$
Thus, in the FLLN, we obtain the same limit $\bar{I}_i$ in \eqref{eqn-barIi} as in the multi-patch SIR model, and the limit $\bar{S}_i(t)$:
 \begin{align}
 \S_i(t) &\;=\; \S_i(0) - \int_0^t \bar\Upsilon_i(s) ds  \int_0^t   \sum_{\ell} p_{\ell,i}(s) F_0(ds)  +  \int_0^t  \sum_{\ell}  \left(  \int_0^{t-s} p_{\ell,i}(u) F(du)  \right)   \bar\Upsilon_\ell(s) ds \non\\  
 & \qquad +   \sum_{\ell=1,\ell \neq i}^L\int_0^t \left( \nu_{S,\ell,i}  \S_j(s)-  \nu_{S,i,\ell}\S_i(s)  \right) ds\,,  \non
 \end{align}
 where $
\bar\Upsilon_i(t):=\frac{ \lambda_i \S_i(t)\sum_{j=1}\kappa_{ij}\I_j(t)}{(\S_i(t)+\I_i(t))^\gamma}. $ Similarly in the FCLT, we obtain  the same limit $\hat{I}_i$ as in \eqref{eqn-hatI-1} for the multi-patch SIR model, and the limit $\hat{S}_i(t)$: 
 \begin{align} 
\hat{S}_i(t) &\;=\; \hat{S}_i(0) -   \int_0^t \hat{\Upsilon}_i(s) ds   + \sum_{\ell=1}^L \int_0^t \int_0^{t-s} p_{\ell,i}(u) F(du)  \hat\Upsilon_\ell(s) ds  + \sum_{\ell=1}^L \big( \hat{I}^{0}_{\ell,i}(t) + \hat{I}_{\ell,i}(t) \big)  \non\\
& \qquad + \sum_{\ell=1, \ell\neq i}^L \int_0^t (\nu_{S,\ell,i}\hat{S}_\ell(s)- \nu_{S,i, \ell}\hat{S}_i(s)) ds - \hat{M}_{A,i}(t)  +\sum_{\ell=1, \ell\neq i}^L \big( \hat{M}_{S,\ell,i}(t) - \hat{M}_{S,i,\ell}(t) \big)\,, \non 
\end{align}
where 
\begin{align*}\hat{\Upsilon}_i(t) &=\frac{\lambda_i}{(\bar{S}_i(t)+\bar{I}_i(t))^{(1+\gamma)}} 
\left\{[(1-\gamma)\bar{S}_i(t)+\bar{I}_i(t)]\bar{I}_{(i)}(t) \hat{S}_i(t) + [\bar{S}_i(t)(\bar{S}_i(t)+\bar{I}_i(t))-\bar{S}_i(t) \hat{I}_{(i)}(t)]\hat{I}_i(t)\right\} \\
&\qquad +\frac{\lambda_i \bar{S}_i(t)\sum_{j\not=i}\hat{I}_j(t) }{ (\S_i(t)+\I_i(t))^\gamma}.
\end{align*} 
\end{remark}

\section{Models with varying infectivity and limiting integral equations models}\label{sec:varinfect}
\subsection{Stochastic model with varying infectivity, LLN and CLT}\label{sec:varinfect-eqs}
In this section, we shall consider the same model as in the original work of Kermack and McKendrick \cite{KMK}, except that 
we shall formulate a continuous time stochastic individual based model, which as the size $N$ of the population tends to $\infty$, converges to their model (but our model is slightly more general, since we do not assume that the law of the infectious period  is absolutely continuous).

As usual, the population consists of three groups of individuals, susceptible, infected and recovered. 
Let $N$ be the population size, and $S^N(t), I^N(t), R^N(t)$ denote the sizes of the three groups, respectively. 
We have the balance equation $N= S^N(t) + I^N(t) + R^N(t)$ for $t\ge 0$. Assume that $S^N(0)>0$, $I^N(0) >0$ and $R^N(0)$ are such that $S^N(0) +I^N(0)+R^N(0)=N$. Infections occur through interactions of infected individuals with the susceptibles, as in the standard models. 
	
Each initially infected individual is associated with an infectivity process $\lambda^0_j(t)$, $j=1,\dots, I^N(0)$, which are assumed to be i.i.d.  
Each newly infected individual is associated with an   infectivity process $\lambda_i(t)$, $i\in \NN$, which are also assumed to be i.i.d. We assume moreover that $(S^N(0),I^N(0),R^N(0))$, $\{\lambda^0_j\}_{j\ge1}$ and $\{\lambda_i\}_{i\ge1}$ are mutually independent. Assume that $\lambda^0_j(0) =0$ and $\lambda_i(0)=0$ with probability one. 
These processes are only taking effect during the infectious periods. 
Define
\begin{align*} 
\eta^0_j &:= \sup\{t>0: \lambda^0_j(t) > 0\}, \quad \forall \, j=1=1,\dots, I^N(0), \\
\eta_i &:= \sup\{t>0: \lambda_i(t) > 0\}, \quad \forall\, i\ge 1. 
\end{align*}
By the i.i.d. assumption of $\{\lambda^0_j(t)\}$, the variables $\eta^0_j, \, j=1,\dots, I^N(0)$, are also i.i.d., representing the remaining infectious durations of the initially infected individuals. 
Similarly,  $\eta_i,\, i \in \NN$  are i.i.d., also independent of $\{\lambda^0_j(t)\}$, and represent the  infectious periods of the newly infected individuals. 
Let $F_0$ and $F$ be the c.d.f.'s of the variables  $\eta^0_j$ and $\eta_i$, respectively, $F^c_0=1-F_0$ and $F^c=1-F$. 

The total force of infection which is exerted on the susceptibles at time $t$ can be written as 
\begin{align} \label{eqn-mfI}
\mathfrak{F}^N(t) = \sum_{j=1}^{I^N(0)}\lambda^0_j(t) 
+\sum_{i=1}^{A^N(t)}\lambda_i (t-\tau^N_i) \,, \quad t \ge 0.
\end{align}
	Thus, the instantaneous infectivity rate function at time $t$ is
\begin{align} \label{eqn-Upsilon-VI}
\Upsilon^N(t) =\mathfrak{F}^N(t) \times  \frac{S^N(t)}{N} , \quad t\ge 0. 
\end{align}
	Observe that in comparison with the $\Upsilon^N(t)$ in \eqref{Upsilon} of the standard model, we have replaced $\lambda I^N(t)$ by the total force of infection $\mathfrak{F}^N(t) $ in the generalized model. It is clear that  the standard SIR model a the particular case of the present model, where 
$\lambda(t)=\lambda{\bf1}_{t<\eta}$, $\eta$ being the random duration of the infectious period. 
	The cumulative infection process $A^N(t)$ is expressed exactly as in \eqref{eq:AN} , using the instantaneous infectivity rate function $\Upsilon^N(t)$ in \eqref{eqn-Upsilon-VI}.  
	
	The epidemic dynamics of the model can be described in the same way as the standard SIR models, in equations \eqref{eqn-SIR-balance}, \eqref{eqn-SIR-In} and \eqref{eqn-SIR-Rn}.
	

\begin{remark}{\bf The SEIR model}.
Suppose that $\lambda_i(t)=0$ for $t\in[0,\xi_i)$, where $\xi_i<\eta_i$, and denote $I$ as the compartment of infected (not necessarily infectious) individuals. An individual who gets infected at time $\tau^N_i$ is first exposed during the time interval $[\tau^N_i,\tau^N_i+\xi_i)$, and then infectious during the time interval
$(\tau^N_i+\xi_i,\tau^N_i+\eta_i)$. The individual is infected during the time interval $[\tau^N_i,\tau^N_i+\eta_i)$.
At time $\tau^N_i+\eta_i$, he recovers. All what follows covers perfectly this situation. In other words, our model acomodates perfectly an exposed period before the infectious period, which is important for many infectious diseases, including the Covid--19. However, we distinguish only three compartments, $\bS$ for susceptible, $\bI$ for infected (either exposed or infectious), $\bR$ for recovered. Note that we could also describe the evolution of the numbers of individuals in the four compartments  $\bS$, $\bE$, $\bI$ and $\bR$ as it is done in \cite{FPP2020b}.
\end{remark}

We make the following assumptions on $\lambda^0$ and $\lambda$.

\begin{assumption} \label{AS-lambda}
The random functions $\lambda(t)$ (resp. $\lambda^0(t)$), of which $\lambda_1(t), \lambda_2(t),\ldots$ (resp. $\lambda^0_1(t), \lambda^0_2(t), \\ \ldots$) are i.i.d. copies, satisfy the following assumptions.  
There exists a constant $\lambda^*<\infty$ such that $\sup_{t \in [0,T]} \max\{\lambda^0(t),  \lambda(t)\} \le \lambda^*$ almost surely, and in addition there exist a given number $k\ge1$, a random sequence $0=\xi^0\le\xi^1\le\cdots\le\xi^k=\eta$ and random functions $\lambda^j\in \bC$, $1\le j\le k$
such that 
\begin{equation} \label{eqn-lambda}
 \lambda(t)=\sum_{j=1}^k\lambda^j(t){\bf1}_{[\xi^{j-1},\xi^j)}(t)\,.
\end{equation}
We assume that for any $T>0$, there exists $\varphi_T\in \bC$ such that $\varphi_T(0)=0$ and for any $0\le s< t\le T$,
$\sup_{1\le j\le k}|\lambda^j(t)-\lambda^j(s)|\le\varphi_T(t-s)$.
\end{assumption}
Let $\bar{\lambda}^0(t) =\E[\lambda^0(t)]$ and $\bar{\lambda}(t) =\E[\lambda(t)]$ for $t\ge 0$. Also, let $v_{0}(t) = \Var(\lambda^0(t))$ and   $v(t) = \Var(\lambda(t))$ for $t\ge 0$. 

\begin{remark}
Recall that the basic reproduction number $R_0$ is the mean number of susceptible individuals whom an infectious individual infects in a large population otherwise fully susceptible. In this model, clearly
\[ R_0=\int_0^\infty \bar\lambda(t)dt\,.\]
 Suppose that $\lambda_i(t)=\tilde{\lambda}(t)\bone_{t<\eta_i}$, where $\tilde{\lambda}(t)$ is a deterministic function. Then
 \[ R_0=\int_0^\infty \tilde{\lambda}(t)F^c(t)dt\,.\]
In the standard SIR model with $\tilde{\lambda}(t) \equiv \lambda$ and $\E[\eta] = \int_0^\infty F^c(t)dt$, the formula above reduces to the well known $R_0 = \lambda \E[\eta]$. See, e.g., \cite{brittonpardoux}. 
\end{remark}

\begin{theorem} \label{thm-FLLN-VI}
 Under Assumptions ~\ref{AS-SIR-1} and~\ref{AS-lambda}, 
\begin{equation*} 
\big(\bar{S}^N, \bar{\mathfrak{F}}^N, \bar{I}^N, \bar{R}^N\big) 
\to \big(\bar{S}, \bar{\mathfrak{F}}, \bar{I}, \bar{R}\big) \qinq \bD^4 \qasq N \to \infty,
\end{equation*}
in probability, locally uniformly in $t$.  
The limits $\bar{S}$ and $\bar{\mathfrak{F}}(t)$ are the unique solution of the  system of Volterra integral equations
consisting of \eqref{SIR-barS}  and
\begin{align}
\bar{\mathfrak{F}}(t)&=\bar{I}(0)\bar{\lambda}^0(t)+\int_0^t\bar{\lambda}(t-s) \bar\Upsilon(s)ds\,,\label{eqn-barfrakI}
\end{align}
with
\begin{equation} \label{eqn-barUpsilon-VI}
\bar\Upsilon(t)=\bar{S}(t)\bar{\mathfrak{F}}(t)\,.
\end{equation}
Now the limits $\bar{I}$ and $\bar{R}$ are given by the formulas \eqref{SIR-barI}--\eqref{SIR-barR}.

\end{theorem}
\begin{remark}{\bf Comparison with the Kermack--McKendrick model}
If we assume that $F_0\equiv F$, then the last system of equations is exactly the system of equations (12), (13), (15) and (14) on page 704 of \cite{KMK}. Indeed, it follows from the computation at the start of Section~\ref{sec:SIRnonM} that the function $B_t$ of \cite{KMK} is our $F^c(t)$, while their $C_t$ is our $f(t)$. Moreover
their $A_t$ is our $\bar{\lambda}(t)$ (indeed, one can think of our $\lambda(t)$ as being the product of a deterministic function of $t$ (their $\phi_t$) multiplied by ${\bf1}_{\eta> t}$, so that $\bar{\lambda}(t)=\phi_t F^c(t)$.
\end{remark}

We now sketch the proof of Theorem~\ref{thm-FLLN-VI}.
\begin{proof}
Thanks to Assumption~\ref{AS-lambda}, it is clear that $\bar{\Upsilon}^N(t)\le\lambda^\ast$. Hence the first step of the proof of Theorem~\ref{thm-FLLN-SIR} remains valid here, \textit{i.e.,} we have the convergence, along a subsequence, of $(\bar{S}^N,\bar{A}^N)$.  We next consider the sequence 
\[\bar{\mathfrak{F}}^N(t)=\sum_{j=1}^{I^N(0)}\lambda^0_j(t) 
+\sum_{i=1}^{A^N(t)}\lambda_i (t-\tau^N_i)=\bar{\mathfrak{F}}_0^N(t)+\bar{\mathfrak{F}}_1^N(t)\,.\]
Concerning the first term $\bar{\mathfrak{F}}_0^N$, as in the proof of Theorem~\ref{thm-FLLN-SIR}, we first consider
\[ \breve{\mathfrak{F}}_0^N(t):=\sum_{j=1}^{N\bar{I}(0)}\lambda^0_j(t),\]
which converges thanks to a LLN for random elements in $\bD$, see Theorem 1 in \cite{rao1963law}. The difference
$\bar{\mathfrak{F}}_0^N(t)-\breve{\mathfrak{F}}_0^N(t)$ is treated as in the proof of Theorem~\ref{thm-FLLN-SIR}.
Concerning the term $\bar{\mathfrak{F}}_1^N$, we first consider
\[ \breve{\mathfrak{F}}_1^N(t):=N^{-1}\sum_{i=1}^{A^N(t)}\bar{\lambda}(t-\tau^N_i)=\int_0^t \bar{\lambda}(t-s)d\bar{A}^N(s)\,.\]
The argument for the weak convergence of that sequence towards 
$\bar{\mathfrak{F}}(t)=\int_0^t \bar{\lambda}(t-s)d\bar{A}(s)$, along any subsequence along which 
$\bar{A}^N\Rightarrow \bar{A}$ is similar to a similar result in the proof of Theorem~\ref{thm-FLLN-SIR}, 
with slightly more tricky arguments. For the details, as well as for the proof of the fact that
$\bar{\mathfrak{F}}_1^N(t)-\breve{\mathfrak{F}}_1^N(t)\to0$, we refer to Section 4 of \cite{FPP2020b}. 
It remains to prove that $(\bar{I}^N(t),\bar{R}^N(t))\Rightarrow(\bar{I}(t),\bar{R}(t))$, which requires similar arguments as in the first steps of the proof. Finally, one can show that the limiting equation has a unique deterministic solution, hence the whole sequence converges, and the convergence is in probability.
\end{proof}

For the FCLT, we need the following additional conditions on the random infectivity functions. 
\begin{assumption} \label{AS-lambda-CLT}
In addition to the conditions in Assumption~\ref{AS-lambda}, the random functions $\lambda(t)$ (resp. $\lambda^0(t)$) satisfy the following conditions.

\begin{itemize}

\item[(i)] There exist nondecreasing functions $\phi$ and $\psi$ in $\bC$ and $\alpha>1/2$ and $\beta>1$ such that for all $0 \le r\le s \le t $, denoting $\breve{\lambda}^0(t) = \lambda^0(t) - \bar{\lambda}^0(t)$, 
\begin{align*}
(a) \quad &  \E\big[ \big(\breve{\lambda}^0(t)  -\breve{\lambda}^0(s)\big)^2 \big] \le (\phi(t) - \phi(s) )^\alpha\,,\\
(b) \quad & \E\big[ \big(\breve{\lambda}^0(t)  -\breve{\lambda}^0(s)\big)^2 \big(\breve{\lambda}^0(s)  -\breve{\lambda}^0(r)\big)^2 \big] \le (\psi(t) - \psi(r) )^\beta. 
\end{align*}

\item[(ii)] Either $\lambda\in\bC$ and satisfies \eqref{varphiHolder}--\eqref{eqn-lambda-inc} below,  or else it satisfies \eqref{eqn-lambda} and the additional conditions below.
There exists a nondecreasing function 
$\varphi\in \bC$ satisfying 
\begin{equation}\label{varphiHolder}
\varphi(r)\le Cr^\alpha, \ \text{ with }\alpha>1/2\ \text{ and } C>0 \text{ arbitrary},
\end{equation} such that 
\begin{equation} \label{eqn-lambda-inc}
|\lambda^j(t)-\lambda^j(s)|\le\varphi(|t-s|), \quad \text{a.s.,}
\end{equation}
 for all $t,s\ge0$, $1\le j\le k$. 
Also, if $F_j$ denotes the c.d.f. of the r.v. $\xi_j$, then the exist $C'$ and  
 $\rho>0$ such that for any $0\le j\le k$, $0\le s<t$,
 \begin{equation*}
 F_j(t)-F_j(s)\le C'(t-s)^\rho\,,
 \end{equation*}
 and in addition, for any $1\le j\le k$, $r>0$,
 \begin{equation*}
 \P(\xi^{j}-\xi^{j-1}\le r|\xi^{j-1})\le C' r^\rho\,.
 \end{equation*}
\end{itemize}
 \end{assumption}

\begin{remark}
A simple example of a random function $\lambda^0$ (resp. $\lambda$) which satisfies the above condition (i)
(resp. (ii)) is as follows.
We take $\lambda^0$ (or $\lambda$) continuous and piecewise linear. It can first be $0$ for a random duration, then it starts from $0$ with a random positive slope, and finally decreases 
to zero with a random negative slope, after which it stays equal to $0$, both slopes being bounded in absolute value by a fixed constant. Not that with such a choice, $\lambda^0$ (resp. $\lambda$) is specified by a small number of parameters.
\end{remark}  
 
 \begin{theorem} \label{thm-FCLT-VI}
Under Assumptions~\ref{AS-SIR-2}, ~\ref{AS-lambda} and~\ref{AS-lambda-CLT}, 
\begin{equation*} 
\big(\hat{S}^N, \hat{\mathfrak{F}}^N, \hat{I}^N, \hat{R}^N\big) 
\RA \big(\hat{S}, \hat{\mathfrak{F}}, \hat{I}, \hat{R}\big) \qinq \bD^4 \qasq N \to \infty\,. 
\end{equation*}
The limit process $(\hat{S},\hat{\mathfrak{F}})$ is the unique solution of the following system of stochastic integral equations: 
\begin{align}
\hat{S}(t) &=  \hat{S}(0) - \hat{M}_A(t) + \int_0^t\hat{\Upsilon}(s)ds,  \label{eqn-hatS}\\
\hat{\mathfrak{F}}(t) &= \hat{I}(0) \bar{\lambda}^0(t)+  \hat{\mathfrak{F}}_{0}(t)   + \hat{\mathfrak{F}}_1(t) + \hat{\mathfrak{F}}_2(t) + \int_0^t \bar{\lambda}(t-s) \hat{\Upsilon}(s) ds,   \label{eqn-frakI}
\end{align}
where
\begin{align} \label{def-hatUpsilon}
\hat{\Upsilon}(t)  =  \hat{S}(t)  \bar{\mathfrak{F}}(t) + \bar{S}(t)  \hat{\mathfrak{F}}(t),
\end{align}
and $\bar{S}(t)$ and $ \bar{\mathfrak{F}}(t)$ are given in Theorem \ref{thm-FLLN-VI},
$\hat{M}_A$, $\hat{\mathfrak{F}}_{0}$, $\hat{\mathfrak{F}}_1$ and $\hat{\mathfrak{F}}_2$  are centered Gaussian processes which are  globally independent of $(\hat{E}(0),\hat{I}(0))$. Moreover, the processes in 
$(\hat{\mathfrak{F}}_{0}, \hat{\mathfrak{F}}_1, (\hat{\mathfrak{F}}_2, \hat{M}_A))$ are independent,  and the covariances of each of those four processes (the last one being $2$--dimensional) are given as follows:
 \begin{align*}
\Cov( \hat{\mathfrak{F}}_{0}(t),  \hat{\mathfrak{F}}_{0}(t'))
&= \bar{I}(0) \Cov(\lambda^0(t), \lambda^0(t')),\\
\Cov( \hat{\mathfrak{F}}_1(t),  \hat{\mathfrak{F}}_1(t'))
&= \int_0^{t\wedge t'} \Cov(\lambda(t-s), \lambda(t'-s)) \bar\Upsilon(s)  ds,\\
\Cov( \hat{\mathfrak{F}}_2(t),  \hat{\mathfrak{F}}_2(t')) &=  \int_0^{t\wedge t'}  \bar{\lambda}(t-s) \bar{\lambda}(t'-s)  \bar\Upsilon(s)  ds,\\
\Cov(\hat{M}_A(t),\hat{M}_A(t'))&=\int_0^{t\wedge t'}\bar\Upsilon(s)ds,\\
\Cov(\hat{M}_A(t),\hat{\mathfrak{F}}_2(t')) &=\int_0^{t\wedge t'} \bar{\lambda}(t'-s)  \bar\Upsilon(s)  ds\,.
\end{align*}
Concerning the pair $(\hat{M}_A,\hat{\mathfrak{F}}_2)$, $\hat{M}_A$ is a non--standard Brownian motion,
and $\hat{\mathfrak{F}}_2(t)=\int_0^t\bar\lambda(t-s)\hat{M}_A(ds)$.
$\hat{S}$ has continuous paths, and 
if $\bar{\lambda}^0$ and $\bar{\lambda}^{0,I}$ are in $\bC$, then $\hat{\mathfrak{F}}$ is also continuous. 

The limits $(\hat{I}, \hat{R})$ are the same as given in Theorem \ref{thm-FCLT-SIR}, with $\hat\Upsilon(t)$ being replaced by the expression in \eqref{def-hatUpsilon}. 
In addition, for $t, t'\ge 0$, 
\begin{align*}
\Cov(\hat{\mathfrak{F}}_{0}(t), \hat{I}_0(t')) 
&=  \bar{I}(0) \big(\E \big[ \lambda^{0}(t) \bone_{\eta^{0}> t'} \big] -\bar\lambda^{0}(t) F^c_{0}(t')   \big) \,, \\
\Cov(\hat{\mathfrak{F}}_{0}(t), \hat{R}_0(t')) 
&=  \bar{I}(0) \big(\E \big[ \lambda^{0}(t) \bone_{\eta^{0}\le t'} \big] -\bar\lambda^{0}(t) F_{0}(t')   \big) \,, \\
\Cov(\hat{\mathfrak{F}}_{1}(t),  \hat{I}_1(t')) &= \int_0^{t \wedge t'}  \Big( \E \big[\lambda(t-s) \bone_{\eta>t'-s}\big] - \bar\lambda(t-s) F^c(t'-s) \Big) \bar\Upsilon(s) ds\,, \\
\Cov(\hat{\mathfrak{F}}_{1}(t), \hat{R}_1(t')) &= \int_0^{t \wedge t'}  \Big( \E \big[\lambda(t-s) \bone_{\eta\le t' -s}\big] - \bar\lambda(t-s) F(t'-s) \Big) \bar\Upsilon(s) ds\,,\\
\Cov(\hat{\mathfrak{F}}_{2}(t),  \hat{I}_1(t')) &=  \int_0^{t \wedge t'}  \bar\lambda(t-s) F^c(t'-s) \bar\Upsilon(s) ds\,, \\
\Cov(\hat{\mathfrak{F}}_{2}(t), \hat{R}_1(t')) &=  \int_0^{t \wedge t'}  \bar\lambda(t-s) F(t'-s)  \bar\Upsilon(s) ds\,. 
\end{align*}

\end{theorem}

\noindent{\bf An alternative initial condition.}
In the above formulation, we have assumed that for the initially infected individuals, their infectivity functions $\lambda^0_j(\cdot)$ are i.i.d., and may follow a different law from those of the newly infected individuals $\lambda_i(\cdot)$, hence, the distribution $F_0$ of the remaining infected periods $\eta^0_j$ generated from $\lambda^0_j(\cdot)$, is different from $F$ of the infected periods $\eta_i$ generated from $\lambda_i(\cdot)$.  
However, we can assume that the random infectivity functions of all individuals, $\{\lambda^0_j(\cdot)\}_j$ and $\{\lambda_i(\cdot)\}_i$ are all i.i.d., while for the initially infected individuals, the time epochs of them becoming infected before time 0 are known, $\tau^N_{j,0}$, $j=1,\dots, I^N(0)$. 
Then $\tilde\tau^N_{j,0} = - \tau^N_{j,0}$, is the elapsed time at time 0 since infection. Set  $\tilde\tau^N_{0,0}=0$, and let $I^N(0,x) = \max\{j \ge 0: \tilde\tau^N_{j,0} \le x\}$. Assume that there exists $\bar{x} \in \RR_+$, such that $I^N(0) = I^N(0,\bar{x})$. Let $\bar\lambda(t) = \E[\lambda^0_j(t)] = \E[\lambda_i(t)]$ for $t\ge 0$. 

The remaining infected period is given by $\eta^0_j=\inf\{t>0: \lambda^0_j(\tilde\tau^N_{j,0} +r) =0, \forall r\ge t\}$. It depends on the elapsed infection time $\tilde\tau^N_{j,0}$, and independent from the remaining infected durations of the other individuals due to the i.i.d. assumption of $\{\lambda^0_j(\cdot)\}_j$.  Given that $\tilde\tau^N_{j,0}=s>0$, the distribution of $\eta^0_j$ is given as in \eqref{eqn-eta0-age}.

Instead of \eqref{eqn-mfI}, the total force of infectivity at time $t$ can be written as 
\begin{align*} 
\mathfrak{F}^N(t) = \sum_{j=1}^{I^N(0)}\lambda^0_j(\tilde{\tau}_{j,0}^N+t)\bone_{\tilde{\tau}_{j,0}^N \le \bar{x}} 
+\sum_{i=1}^{A^N(t)}\lambda_i (t-\tau^N_i) \,, \quad t \ge 0. 
\end{align*}
All the other processes have the same representations. Recall that the process $I^N(t)$ is given as in \eqref{eqn-SIR-In-I0x} with the variables $\eta^0_j$ implicitly depending on $\tilde{\tau}_{j,0}^N$. 

Under Assumption \ref{AS-SIR-initial2-LLN}, we can show that the FLLN holds with  $\bar{S}(t)$ in \eqref{SIR-barS}, and the limit $\bar{\mathfrak{F}}(t)$ is given as
\begin{align}\label{eqn-barfrakI-initial2}
\bar{\mathfrak{F}}(t)&=\int_0^{\bar{x}} \bar{\lambda}(y+t) \bar{I}(0,dy) +\int_0^t\bar{\lambda}(t-s)\bar\Upsilon(s)ds\,,
\end{align}
and the limits $\bar{I}$ and $\bar{R}$ are given by the same expressions in \eqref{SIR-barI-I0x} and \eqref{SIR-barR-I0x} with $\bar\Upsilon(t)$ in \eqref{eqn-barUpsilon-VI}. 

Under Assumption \ref{AS-SIR-initial2-CLT}, we can show that the FCLT holds with the limit $\hat{S}(t)$ given by \eqref{eqn-hatS} and the limit $\hat{\mathfrak{F}}(t)$ given by
 \begin{align*}
\hat{\mathfrak{F}}(t) &=\int_0^{\bar{x}}\bar{\lambda}(y+t) d \hat{I}(0,y)  +  \hat{\mathfrak{F}}_{0}(t)   + \hat{\mathfrak{F}}_1(t) + \hat{\mathfrak{F}}_2(t) + \int_0^t \bar{\lambda}(t-s) \hat{\Upsilon}(s) ds,  
\end{align*}
where $\hat{\Upsilon}(t)$ is given in \eqref{def-hatUpsilon}, 
and $\bar{S}(t)$ and $ \bar{\mathfrak{F}}(t)$ are given as above,
$\hat{\mathfrak{F}}_{0}$ is a continuous Gaussian process with mean zero and covariance function: for $t, t'\ge0$, 
 \begin{align*}
\Cov( \hat{\mathfrak{F}}_{0}(t),  \hat{\mathfrak{F}}_{0}(t'))
&= \int_0^{\bar{x}} \Cov(\lambda(y+t), \lambda(y+t'))\bar{I}(0,dy),
\end{align*}
and the other limits $\hat{M}_A$, $\hat{\mathfrak{F}}_1$ and $\hat{\mathfrak{F}}_2$  are centered Gaussian processes as given in Theorem \ref{thm-FCLT-VI}. 

The limits $(\hat{I}, \hat{R})$ are  given by the same expressions in \eqref{SIR-Ihat-I0x} and \eqref{SIR-Rhat-I0x} with $\hat\Upsilon(t)$ in \eqref{def-hatUpsilon}.
In addition, 
$\hat{\mathfrak{F}}_{0}$ and $\hat{I}_{0}(t)$, $\hat{R}_{0}(t)$ have covariance functions: for $t, t'\ge 0$,
\begin{align*}
\Cov(\hat{\mathfrak{F}}_{0}(t), \hat{I}_0(t')) 
&= \int_0^{\bar{x}} \E[ \lambda(y+t) \bone_{\eta^0|\tau_0=y >t'}]   \bar{I}(0,dy) - \int_0^{\bar{x}} \bar\lambda(y+t) \bar{I}(0,dy)   \int_0^{\bar{x}} \frac{F^c(t'+y)}{F^c(y)}  \bar{I}(0, dy)\,, \\
\Cov(\hat{\mathfrak{F}}_{0}(t), \hat{R}_0(t')) 
&=  \int_0^{\bar{x}} \E[ \lambda(y+t) \bone_{\eta^0|\tau_0=y \le t'}]   \bar{I}(0,dy)  \\
& \qquad - \int_0^{\bar{x}} \bar\lambda(y+t) \bar{I}(0,dy)   \int_0^{\bar{x}}\left(1- \frac{F^c(t'+y)}{F^c(y)} \right)  \bar{I}(0, dy)\,. 
\end{align*}

\subsection{The early phase of the epidemic}
In this subsection we follow again \cite{FPP2020b}, to which we refer the reader for the proofs.  
 Theorem~\ref{thm-FLLN-VI} shows that the deterministic system of equations \eqref{SIR-barS}-\eqref{eqn-barR} accurately describes the evolution of the stochastic process defined in the previous subsection when the initial number of infectious individuals is of the order of $ N $.
But epidemics typically start with only a handful of infectious individuals, and it takes some time before the epidemic enters the regime of Theorem~\ref{thm-FLLN-VI}.
Exactly how long this takes depends on the population size $ N $ and on the growth rate of the epidemic.
To determine this growth rate, we study the behavior of the stochastic process when the initial number of infectious individuals is kept fixed as $ N \to \infty $.

Recall that $R_0=\int_0^\infty \bar{\lambda}(t)dt$, and let $ \rho \in \R $ be the unique solution of
\begin{align} \label{def_rho}
\int_{0}^{\infty} \overline{\lambda}(t) e^{-\rho t} dt = 1.
\end{align}
If $R_0\le1$, the total number of infected individuals remains small as $N\to\infty$, while if $R_0>1$ (which we assume in what follows), with positive probability a major outbreak takes place, \textit{i.e.,} a positive fraction of the $N$ individuals is infected at some point during the course of the epidemic. 
It is well--known, see e.g. Section 1.2 in \cite{brittonpardoux} and also \cite{crump1968general, crump1969general} and the book  \cite{jagers1975branching}, that during its early stage, an epidemic can be well approximated by a continuous--time  branching process, often called Crump-Mode-Jagers branching process. Indeed, each infectious infects individuals in the population, and as long as almost  all the individuals in the population are susceptible, the probability that two distinct infectious individuals try to infect the same susceptible is close to $0$. As a result the ``progenies'' of the various infectious individuals are essentially independent, thus the branching property. Of course, that approximation breaks down as soon as a significant number of individuals have been hit by the disease. Using an approximation of the early phase by a (in our case non--Markov) branching process,
it has been shown in \cite{FPP2020b} that on the event that a major outbreak takes place, for any 
$\varepsilon<1-R_0^{-1}$, if $T^N_\varepsilon$ denotes the first time at which the proportion of infected individuals is at least $\varepsilon$, as $N\to\infty$,  $T^N_\varepsilon=\frac{1}{\rho}\log(N)+\mathcal{O}(1)$, which means an exponential growth with rate $\rho$. 

Next one can show that, still at the start of the epidemic, our LLN deterministic model also grows at the same rate 
$\rho$. More precisely, if we assume that we can replace $\bar{S}(t)$ by $1$, the LLN model becomes (after remultiplication by $N$) the following linear system:
\begin{align*}
{\mathfrak{F}}(t) &= {I}(0)  \bar{\lambda}^0(t) + \int_0^t \bar{\lambda}(t-s) {\mathfrak{F}}(s)ds \,,\\
{I}(t) &= {I}(0) F_{0}^c(t)   +\int_0^t F^c(t-s) {\mathfrak{F}}(s)ds\,,\\
{R}(t)&= R(0)+  {I}(0) F_{0}(t)   +\int_0^t F(t-s) {\mathfrak{F}}(s)ds\, . 
\end{align*}

In the next statement, which is part of Theorem 2.13 in \cite{FPP2020b}, $\rho$ is specified by \eqref{def_rho}.
\begin{theorem} \label{thm:early_phase_deterministic}
We assume that Assumption~\ref{AS-lambda} is valid, and that $R_0>1$, hence $\rho>0$.
	Define
	\begin{align*} 
	\bm{i} := \int_{0}^{\infty} F^c(s) \rho e^{-\rho s} ds, && \bm{r} := 1 - \bm{i},
	\end{align*}
	and
	\begin{align*}
	\overline{\lambda}_\rho(t) := \frac{\int_{0}^{\infty} \overline{\lambda}(t+s) e^{-\rho s} ds}{\int_{0}^{\infty} F^c(s) e^{-\rho s} ds}, && F_\rho^c(t) := \frac{\int_{0}^{\infty} F^c(t+s) e^{-\rho s} ds}{\int_{0}^{\infty} F^c(s) e^{-\rho s} ds}.
	\end{align*}
	If $ \overline{\lambda}^0 = \overline{\lambda}_\rho $ and $ F_0 = F_\rho $, the above linear system  admits the following solution
	\begin{align*} 
	\mathfrak{F}(t)=\rho\, e^{\rho t}, \quad I(t)=\bm{i}\, e^{\rho t}, \quad R(t) = \bm{r}\, e^{\rho t}\, \quad t \geq 0.
	\end{align*}
\end{theorem}

Let us suppose that $ \overline{\lambda} $ is only known up to a constant factor $ \mu > 0 $, \textit{\textit{i.e.}}, 
\begin{align*}
\overline{\lambda}(t) = \mu\, \overline{g}(t), \quad t \geq 0,
\end{align*}
where $ \mu $ is unknown but $ \overline{g} $ is known (for example from medical data on viral shedding).
We now assume w.l.o.g. that $ \overline{g} $ has been normalized in such a way that $\int_0^\infty\overline{g}(t)dt=1$.
We can then estimate $ \mu $ (and $ R_0 $) from the growth rate $ \rho $, which can be measured easily at the beginning of the epidemic ($ \rho = \log(2) / d $, where $ d $ is the doubling time of the daily number of newly infected individuals), using the relation \eqref{def_rho}.
The following is thus a corollary of Theorem~\ref{thm:early_phase_deterministic}.

\begin{coro} \label{coro-R0}
	Let $ \rho $ be the growth rate of the number of infected individuals.
	Then
	\begin{align}\label{eqn-R0}
	R_0=\mu = \left( \int_{0}^{\infty} \overline{g}(t) e^{-\rho t} ds \right)^{-1}\,.
	\end{align}
\end{coro}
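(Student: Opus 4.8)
The plan is to observe that the corollary is a direct algebraic consequence of the definition of $R_0$, the normalization of $\overline{g}$, and the defining equation \eqref{def_rho} for the growth rate $\rho$; no new analytic input is required beyond what is already in place. In broad strokes, I would first identify $R_0$ with $\mu$, then read off $\mu$ from \eqref{def_rho} by factoring out the constant.

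First I would record the identity $R_0 = \mu$. By definition $R_0 = \int_0^\infty \overline{\lambda}(t)\,dt$ (as noted in the remark following Assumption~\ref{AS-lambda}), and by hypothesis $\overline{\lambda}(t) = \mu\,\overline{g}(t)$ with $\int_0^\infty \overline{g}(t)\,dt = 1$. Hence
\[
R_0 = \int_0^\infty \mu\,\overline{g}(t)\,dt = \mu \int_0^\infty \overline{g}(t)\,dt = \mu.
\]
Next I would substitute the same factorization $\overline{\lambda} = \mu\,\overline{g}$ into the relation \eqref{def_rho}. Since $\mu$ is a constant it factors out of the integral, giving
\[
1 = \int_0^\infty \overline{\lambda}(t)\,e^{-\rho t}\,dt = \mu \int_0^\infty \overline{g}(t)\,e^{-\rho t}\,dt,
\]
so that $\mu = \bigl(\int_0^\infty \overline{g}(t)\,e^{-\rho t}\,dt\bigr)^{-1}$. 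Combining this with $R_0 = \mu$ yields exactly \eqref{eqn-R0}.

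The only point that deserves comment — and it is not really an obstacle — is that every quantity above is well posed. Here $\rho$ is taken to be the unique real solution of \eqref{def_rho}, so $\int_0^\infty \overline{\lambda}(t)\,e^{-\rho t}\,dt$ is finite and equal to $1$; dividing by $\mu>0$ shows that $\int_0^\infty \overline{g}(t)\,e^{-\rho t}\,dt = 1/\mu$ is finite and strictly positive, so its reciprocal is legitimate. In the regime $\rho<0$ this finiteness is precisely the moment condition $\E\big[e^{-\rho\eta}\big]<\infty$ invoked in Theorem~\ref{thm:early_phase_deterministic}, while for $\rho\ge 0$ it is automatic from $e^{-\rho t}\le 1$ together with $\int_0^\infty \overline{g}(t)\,dt=1$. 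Thus the corollary follows immediately once $\rho$ has been measured from the observed doubling time via $\rho=\log(2)/d$.
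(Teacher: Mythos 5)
Your proposal is correct and follows exactly the route the paper intends: the identity $R_0=\mu$ comes from the normalization $\int_0^\infty \overline{g}(t)\,dt=1$ together with $R_0=\int_0^\infty\overline{\lambda}(t)\,dt$, and the formula for $\mu$ comes from factoring the constant out of \eqref{def_rho}, with Theorem~\ref{thm:early_phase_deterministic} supplying the identification of the observed growth rate with the solution $\rho$ of \eqref{def_rho}. Your added remark on well-posedness (finiteness of the integral, tied to $\E\big[e^{-\rho\eta}\big]<\infty$ when $\rho<0$) is a sensible precaution that the paper leaves implicit.
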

Note that $R_0$ can be thought of as the growth rate of the epidemic from one generation to the next, while $\rho$ is the growth rate of the epidemic in real time. The formula \eqref{eqn-R0} is formula (2.7) in \cite{WallingaLipsitch}. 

\subsection{Application to the Covid--19 epidemic}

We now  explain how the type of model described in this section can be used to model the Covid--19 epidemic. As we have seen, the increase in realism with respect to the classical ``Markovian'' models (where the infectivity is constant and fixed across the population, and the Exposed and Infectious periods follow an exponential distribution) is paid by replacing a system of ODEs by  a system of Volterra integral equations. However, we have a small benefit in that the flexibility induced by the fact that the law of $\lambda$ is arbitrary allows us to reduce the number of compartments in the model, so that we can replace a system of ODEs 
by a system of Volterra type equations of smaller dimension. 

\begin{figure}[ht]
	\centering
	\includegraphics[width=.6\linewidth]{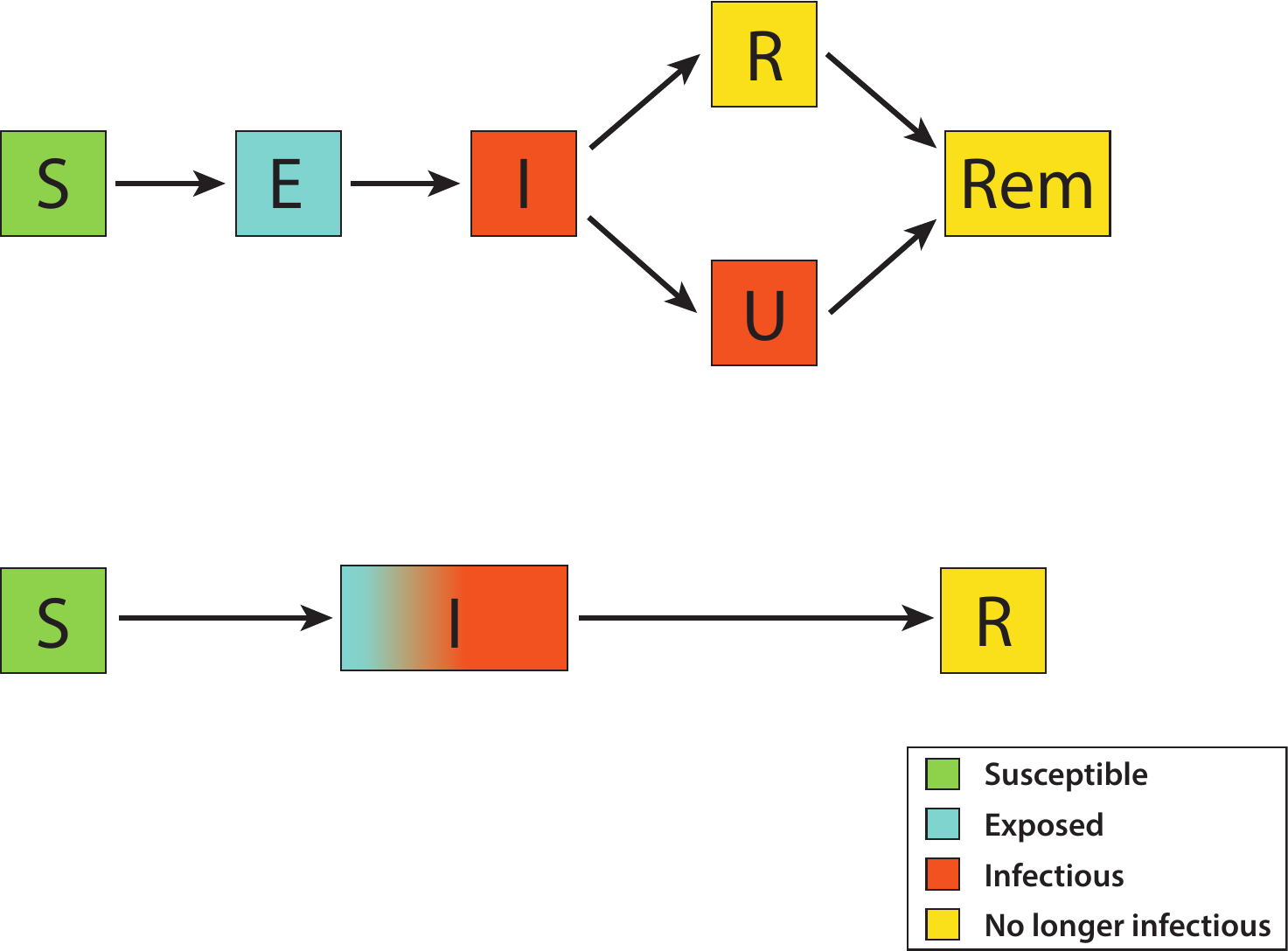}
	\caption{Flow chart of the SEIRU model of \cite{LMSW} and of our SIR model. We are able to replace the six compartments of the SEIRU model with only three compartments by using the equations described in Theorem~\ref{thm-FLLN-VI}.}\label{fig:SEIR}
\end{figure} 

 All the models which have been proposed for the Covid epidemic can be thought of as refinements of a Markovian SIR or SEIR model. Many of them include a bifurcation, which separates the infected individuals who are detected or not, who  have or not severe symptoms, who need to go to the hospital or not, to an intensive care unit or not, etc.
As one example of such Covid model, let us describe the $\bS\bE\bI\bR\bU$ model of \cite{LMSW}, see Figure \ref{fig:SEIR}. 
An individual who is infected is first ``Exposed'' $\bE$, then ``Infectious" $\bI$. Soon after, the infectious individual either develops significant symptoms, and then will be soon ``Reported'' $\bR$, and isolated so that he/she does not infect any more; while the alternative is that this infectious individual is asymptomatic: he/she develops no or very mild symptoms, so remains ``Unreported" $\bU$, and continues to infect susceptible individuals for a longer period. 
Both unreported and reported cases eventually enter the ``Removed" (${\bf Rem}$) compartment.
In this model, there are 6 compartments: $\bS$ like susceptible, $\bE$ like exposed, $\bI$ like infectious, 
$\bR$ like reported, $\bU$ like unreported, and ${\bf Rem}$ like removed.

Our approach allows us to have a more realistic version of this model with only 3 compartments (see Figure~\ref{fig:SEIR}): $\bS$ like susceptible, $\bI$ like infected (first exposed, then infectious), $\bR$ like removed (which includes the Reported individuals, since they do not infect any more, and will recover soon or later).  As already explained, we do not need to distinguish between the exposed and infectious, since the function $\lambda$ is allowed to remain equal to zero during a certain time interval starting from the time of infection. More importantly, since the law of $\lambda$ is allowed to be bimodal, we can accommodate in the same compartment $\bI$ individuals who remain infectious for a short duration of time, and others who will remain infectious much longer (but probably with a lower infectivity). Moreover, since we know, see \cite{he2020temporal}, that the infectivity decreases after a maximum which in the case of symptomatic individuals, seems  to take place shortly before symptom onset, our varying infectivity model allows us to use a model corresponding to what the medical science tells us about this illness. Note that our version of the SEIRU model from \cite{LMSW} is the same as the one which we have already used in \cite{FPP2020a} (except that there we had to distinguish the E and the I compartments). However, the main novelty here is that the infectivity decreases after a maximum near the beginning of the infectious period.

\begin{figure}[ht]
	\centering
	\includegraphics[width=0.5\textwidth]{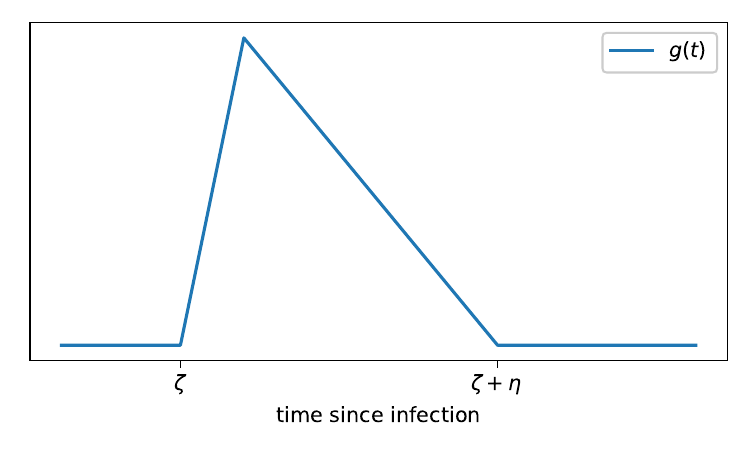}
	\caption{Profile of the function $g(t)$ used in our computation of $ R_0 $ as a function of $ \zeta $ and $ \eta $. The function increases linearly (up to a value 1 or $ \alpha $ depending on whether the individual is reported or unreported) on the interval $ [\zeta, \zeta + \eta/5] $ and then decreases linearly on $ [\zeta + \eta/5, \zeta + \eta] $.} \label{fig:g_t_profile}
\end{figure}

More precisely, we consider that $ t \mapsto g(t) $ increases linearly on the time interval $ [\zeta, \zeta + \eta / 5] $, from 0 to 1 for reported individuals, and from 0 to $ \alpha $ for unreported individuals, and that it then decreases linearly to 0 on the interval $ [\zeta + \eta / 5, \zeta + \eta] $, as shown on Figure~\ref{fig:g_t_profile}.
We then take $ (X_1, X_2) $ a pair of independent Beta random variables with parameters (2, 2) and we assume that
\begin{align*}
	\zeta = 2 + 2 X_1, && \eta = \begin{cases}
	3 + X_2 & \text{ for reported individuals,} \\
	8 + 4 X_2 & \text{ for unreported individuals.}
	\end{cases}
\end{align*}
This joint law of $ (\zeta, \eta) $ is the one that was used in \cite{FPP2020a}
to study the Covid--19 epidemic in France (where the infectivity was assumed to be constant and uniform among individuals in that work), and these values are compatible with the results described in \cite{he2020temporal}.

\begin{remark}
It is a general tendency, in order to make Markov models more realistic, to increase the number of compartments, compared to the general SEIR model. Our random varying infectivity approach allows us in the contrary to reduce the number of compartments. Not only do we  not need to distinguish between $\bE$ and $\bI$, but we also do not really need to introduce the $\bR$ compartment. When an infected individual creases to be infectious, he or she recovers, but formally we can keep him/her  in the $\bI$ compartment, at least as concerns the description of the dynamic of the epidemic. However, separating the $\bI$ and $\bR$ compartments is useful if we want to follow the number of infected (and infectious or soon infectious) individuals in the population. The same reduction of the number of compartments appears in the model of the Covid epidemic proposed by \cite{foutel2020individual}, which in that respect is similar to our model.
\end{remark}

\section{Models with infection-age dependent  infectivity and limiting PDEs}\label{sec:PDE}
Kermack and McKendrick pioneered the introduction of PDE models to describe infection-age dependent infectivity and recovery-age dependent susceptibility in their 1932 paper \cite{KMK32}. Here we shall describe the PDE model for an infection-age dependent infectivity, in the framework of a $\bS\bI\bR$/$\bS\bE\bI\bR$, \textit{i.e.,} where the recovered individuals do not lose their immunity. 
We shall obtain the deterministic (here PDE/integral equation) model as a LLN limit of individual based stochastic models. The underlying assumptions are the same as in the previous section, except for the initially infected individuals for whom the age of infection is given at time zero. But we shall give a different description of the model, as we shall see now.

We have the same compartments as in Section~\ref{sec:varinfect-eqs}, $S^N(t)$, $I^N(t)$ and $R^N(t)$ are as above, and again $S^N(t)+I^N(t)+R^N(t)\equiv N$. Let now $\mfI^N(t,x)$ be the number of infected individuals at time $t$ that have been infected for a duration less than or equal to $x$. 
Note that for each $t$, $\mfI^N(t,x)$ is nondecreasing in $x$, which is the distribution of $I^N(t)$ over the infection-ages.  
Let $A^N(t)$ be the cumulative number of newly infected individuals in $(0,t]$, with the jump times $\{\tau^N_i: i \in \NN\}$.  Each individual who has been infected after time $0$ has an infectivity process $\lambda_i(\cdot)$, and we assume that these random functions  are i.i.d.. Let $\eta_i= \sup\{t>0: \lambda_i(t) > 0\}$ be the infected period corresponding to the individual that gets infected at time $\tau^N_i$. The $\eta_i$'s are i.i.d., with a cumulative distribution function (c.d.f.) $F$.  Let $F^c=1-F$.

Let $\{\tau_{j,0}^N, j =1,\dots, I^N(0)\}$ be the times at which the initially infected individuals at time 0 were infected. 
Then $\tilde{\tau}_{j,0}^N = -\tau_{j,0}^N$, $ j =1,\dots, I^N(0)$, represents the age of infection of individual $j$ at time $0$. 
W.l.o.g., we assume that $0 > \tau_{1,0}^N> \tau_{2,0}^N> \cdots >\tau_{I^N(0),0}^N$ (or equivalently 
$0 < \tilde{\tau}_{1,0}^N< \tilde{\tau}_{2,0}^N< \cdots < \tilde{\tau}_{I^N(0),0}^N$). Set $\tilde{\tau}_{0,0}^N=0$. 
We define $\mathfrak{I}^N(0,x) = \max\{j \ge 0: \tilde{\tau}_{j,0}^N \le x\}$, the number of initially infected individuals that have been infected for a duration less than or equal to $x$ at time $0$. Assume that there exists $0 \le \bar{x}< \infty$  such that $I^N(0) = \mfI^{N}(0, \bar{x})$ a.s. 

To each initially infected individual $j=1,\dots, I^N(0)$, is associated an infectivity process $\lambda_j^0(\cdot)$, and we assume that they are also i.i.d., with the same law as $\lambda_i(\cdot)$.  
For each $j$, let  $\eta^0_j =\sup\{t>0: \lambda_j^0(\tilde{\tau}_{j,0}^N+t)>0\}$ be the remaining infectious period, which depends on the elapsed infection time $\tilde{\tau}_{j,0}^N$, but is independent of the elapsed infection times of other initially infected individuals.  
In particular, the conditional distribution of $\eta^0_j$ given that $\tilde{\tau}_{j,0}^N=s>0$ is given as in \eqref{eqn-eta0-age}.
Note that the $\eta^0_j$'s are independent but not identically distributed. 

For an initially infected individual $j=1,\dots,I^N(0)$, 
 the infection age is given by $\tilde{\tau}^N_{j,0}+ t$.
 For a newly infected individual $i$, the infection age is given by 
 $t- \tau^N_i$. 
 Note that $\lambda_i(\cdot)$ and $\lambda^0_j(\cdot)$ are equal to zero on $\RR_{-}$. 
 
 The aggregate force of infection at time $t$ is given by 
 \begin{align} \label{eqn-cI-n}
\mathfrak{F}^N(t) = \sum_{j=1}^{I^N(0)} \lambda_j^0 (\tilde{\tau}^N_{j,0}+t)   
+ \sum_{i=1}^{A^N(t)} \lambda_i(t-\tau^N_i) , \quad t \ge 0.
 \end{align}
We have again \eqref{eqn-Upsilon-VI} and \eqref{eq:AN}. Moreover the total number of individuals infected at time $t$ that have been infected for a duration which is less than or equal to $x$:
$$
\mfI^N(t,x) = \mfI^N_0(t,x) + \mfI^N_1(t,x), \quad t \ge 0, \, x \ge 0,
$$
where $\mfI^N_0(t,x)$ is the number of initially infected individuals who have been infected for a duration less than or equal to $x$ at time $t$, which is given as  
\begin{align} \label{eqn-In0-rep}
\mfI^{N}_0(t,x) = \sum_{j=1}^{I^N(0)} \bone_{\eta_j^0 > t} \bone_{\tilde{\tau}_{j,0}^N  \le (x-t)^+ } = \sum_{j=1}^{\mfI^N(0, (x-t)^+)} \bone_{\eta_j^0 > t}\,, \quad t, x\ge 0,
\end{align}
and $\mfI^{N}_1(t,x)$ is the number of newly infected individuals who have been infected for a duration  less than or equal to $x$ at time $t$, which equals
\begin{align} \label{eqn-In1-rep}
\mfI^{N}_1(t,x) &=  \sum_{i=1}^{A^N(t)} \bone_{(t-x)^+<\tau^N_i \le t} \bone_{\tau^N_i + \eta_i >t} = \sum_{i=1}^{A^N(t)}  \bone_{\tau^N_i + \eta_i >t}- \sum_{i=1}^{A^N((t-x)^+)}  \bone_{\tau^N_i + \eta_i >t}\non \\
&=  \sum_{i=A^N((t-x)^+)+1}^{A^N(t)}  \bone_{\tau^N_i + \eta_i >t} \,.
\end{align}

Note that for each $t$,  $\mfI^N_0(t, \cdot)$ has support over $[0, t+\bar{x}]$ and $\mfI^N_1(t, \cdot)$ has support over $[0,t]$.  Thus
$$
I^N(t) = \mfI^N_0(t,t+\bar{x}) + \mfI^N_1(t,t) =\mfI^N(t,\infty), \quad t \ge 0. 
$$

The sample paths of $\mfI^N(t,x)$ belong to the space $\bD_\bD$, denoting $ \bD(\RR_+;\bD(\RR_+;\RR))$, the $\bD$-valued $\bD$ space.

Define the fluid-scaled processes  $\bar{X}^N= N^{-1} X^N$ for any processes $X^N$. 
We make the following assumptions on the initial quantities. 

\begin{assumption} \label{AS-FLLN-Initial}
There exists a deterministic continuous nondecreasing function $\bar{\mfI}(0,x)$ for $x\ge 0$ with $\bar{\mfI}(0,0)=0$ such that 
$\bar{\mfI}^{N}(0,\cdot) \to \bar{\mfI}(0,\cdot)$ in $\bD$  in probability as $N\to\infty$. 
 Let $\bar{I}(0) = \bar{\mfI}(0, \bar{x})$.  Then $(\bar{I}^N(0), \bar{S}^N(0),\bar{R}^N(0)) \to (\bar{I}(0), \bar{S}(0),\bar{R}(0)) \in (0,1)^3$  in probability as $N\to \infty$ where  $ \bar{S}(0) + \bar{I}(0)+\bar{R}(0)=1$. 
\end{assumption}

\begin{remark} Suppose now that the r.v.'s $\{\tau_{j,0}^N\}_{1\le j\le N}$ are not ordered, but rather i.i.d., with a common distribution function $G$ which we assume to be  continuous. It then follows from the LLN  that Assumption~\ref{AS-FLLN-Initial} holds in this case.
\end{remark} 

 We have the following FLLN. 

\begin{theorem} \label{thm-FLLN-PDE}
Under Assumptions~\ref{AS-lambda} and~\ref{AS-FLLN-Initial}, as $N\to \infty$,
\begin{align*}
\big(\bar{S}^N, \overline{\mathfrak{F}}^N, \bar{\mfI}^{N},   \bar{R}^N\big) \to\big(\bar{S},\overline{\mathfrak{F}}, \bar{\mfI}, \bar{R}\big) \ \text{in probability, locally uniformly in $t$ and $x$}, 
\end{align*}
where the limits are the unique continuous solution to the following set of integral equations,  for $t, x\ge 0$, 
\begin{align}
\bar{S}(t) &= \bar{S}(0) -  \int_0^t\bar{\Upsilon}(s) ds, \\
 \overline{\mathfrak{F}}(t) 
  &= \int_0^{\bar{x}} \bar\lambda(y+t)  \bar{\mfI}(0, dy)  + \int_0^t  \bar\lambda(t-s)  \bar\Upsilon(s) ds\,,
  \label{eqn-overline-cal-I-2}\\
\bar{\mfI}(t,x) &=   \int_0^{(x-t)^+} \frac{F^c(t+y)}{F^c(y)}  \bar{\mfI}(0, dy)   +  \int_{(t-x)^+}^t F^c(t-s) \bar{\Upsilon}(s) ds, \label{eqn-barI}\\
\bar{R}(t) &=\bar{R}(0)+\int_0^{\bar{x}} \left(1- \frac{F^c(t+y)}{F^c(y)} \right)  \bar{\mfI}(0, dy) + \int_0^t F(t-s) \bar{\Upsilon}(s) ds,  \label{eqn-barR}
\end{align}
with 
\begin{equation} \label{eqn-bar-Upsilon}
\bar{\Upsilon}(t) =\bar{S}(t)  \overline{\mathfrak{F}}(t) =   \bar\mfI_x(t,0)\,. 
\end{equation}
 The function $\bar{\mfI}(t,x)$ is nondecreasing in $x$ for each $t$,  the integral w.r.t. $\bar{\mfI}(0,dy)$ is a Lebesgue-Stieltjes integral with respect to the measure which coincides with the distributional derivative $\partial_x\bar{\mfI}(0,\cdot)=\bar{\mfI}_x(0,\cdot)$. 
 As a consequence, $\bar{I}^N\to \bar{I}$ in $\bD$ in probability as $N\to\infty$ where 
 \begin{align}\label{eqn-barIt}
 \bar{I}(t)= \bar{\mfI}(t, t+\bar{x}) =  \int_0^{\bar{x}} \frac{F^c(t+y)}{F^c(y)}  \bar{\mfI}(0, d y)   +  \int_{0}^t F^c(t-s) \bar{\Upsilon}(s) ds, \quad t \ge 0. 
 \end{align}
 \end{theorem}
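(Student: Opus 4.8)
The plan is to follow the same martingale-plus-empirical-process strategy used in the proofs of Theorems~\ref{thm-FLLN-SIR} and~\ref{thm-FLLN-VI}, upgrading it so as to track the infection-age-resolved process $\bar{\mfI}^N(t,x)$ jointly in $(t,x)$. First I would establish the basic a priori bound: by Assumption~\ref{AS-lambda} one has $\overline{\mathfrak{F}}^N(t)\le\lambda^*$, hence $0\le\bar\Upsilon^N(t)=\overline{\mathfrak{F}}^N(t)\bar S^N(t)\le\lambda^*$. Writing the compensated decomposition
\[
\bar A^N(t)=\int_0^t\bar\Upsilon^N(s)\,ds+\bar M_A^N(t),\qquad \bar M_A^N(t)=\frac1N\int_0^t\int_0^\infty\bone_{u\le\Upsilon^N(s)}\bar Q(ds,du),
\]
Doob's inequality gives $\E[(\bar M_A^N(t))^2]\le \lambda^* t/N$, so $\bar M_A^N\to0$ in probability locally uniformly, while the drift part is Lipschitz with constant $\lambda^*$. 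This makes $\bar A^N$ (and hence $\bar S^N=\bar S^N(0)-\bar A^N$) tight in $\bC$, and along a subsequence $\bar A^N\RA\bar A$ with $0\le\bar A(t)-\bar A(s)\le\lambda^*(t-s)$.

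Second, I would treat the force of infection $\overline{\mathfrak{F}}^N$ exactly as in the proof of Theorem~\ref{thm-FLLN-VI}, splitting it into the initially- and newly-infected contributions in \eqref{eqn-cI-n}. For the initial part I would combine the convergence $\bar{\mfI}^N(0,\cdot)\to\bar{\mfI}(0,\cdot)$ of Assumption~\ref{AS-FLLN-Initial} with the law of large numbers for i.i.d.\ random elements of $\bD$ (Theorem~1 in \cite{rao1963law}) to obtain the limit $\int_0^{\bar x}\bar\lambda(y+t)\,\bar{\mfI}(0,dy)$. For the newly-infected part I would introduce the conditional-mean surrogate $\int_0^t\bar\lambda(t-s)\,d\bar A^N(s)$, show that the difference vanishes via the usual variance bound on the martingale built from $\lambda_i(t-\tau^N_i)-\bar\lambda(t-\tau^N_i)$, and pass to the limit in the surrogate along the subsequence on which $\bar A^N\RA\bar A$, yielding the integral term of \eqref{eqn-overline-cal-I-2}.

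Third---and this is where the present theorem goes beyond the earlier ones---I would prove the joint convergence of the two-parameter process $\bar{\mfI}^N=\bar{\mfI}_0^N+\bar{\mfI}_1^N$ in $\bD_\bD$. Using the representations \eqref{eqn-In0-rep}--\eqref{eqn-In1-rep}, I would replace each piece by its conditional expectation given the realized infection times and the initial infection ages: for $\bar{\mfI}_1^N$ the conditional mean is $\int_{(t-x)^+}^t F^c(t-s)\,d\bar A^N(s)$, and for $\bar{\mfI}_0^N$ it is $\int_0^{(x-t)^+}\frac{F^c(t+y)}{F^c(y)}\,\bar{\mfI}^N(0,dy)$, using the conditional law $\P(\eta^0_j>t\mid\tilde\tau^N_{j,0}=s)=F^c(t+s)/F^c(s)$. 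Conditional-variance estimates give $\E[(\bar{\mfI}^N-\E[\bar{\mfI}^N\mid\cdot])^2]=O(1/N)$ uniformly in $(t,x)$ because the summands are conditionally centered and orthogonal, and a Portmanteau-type argument (as in Lemma~4.4 of \cite{FPP2020b}) then identifies the limits of the conditional means. The genuinely new obstacle, and the step I expect to be hardest, is tightness of $\bar{\mfI}^N$ in the $\bD$-valued $\bD$-space $\bD_\bD$: one must control the oscillations jointly in $t$ and in $x$, where the $x$-dependence enters only through the summation ranges $A^N((t-x)^+)$ and $\mfI^N(0,(x-t)^+)$. The monotonicity of $x\mapsto\bar{\mfI}^N(t,x)$, the Lipschitz control of $\bar A^N$, and the continuity of $\bar{\mfI}(0,\cdot)$ together supply the needed modulus of continuity and rule out joint jumps.

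Finally, I would close the loop. Along the convergent subsequence, $d\bar A(s)=\bar\Upsilon(s)\,ds$ with $\bar\Upsilon(s)=\bar S(s)\overline{\mathfrak{F}}(s)$, which turns the identified limits into the integral equations \eqref{eqn-overline-cal-I-2}--\eqref{eqn-barR}. The pair $(\bar S,\overline{\mathfrak{F}})$ solves a closed system of Volterra integral equations with a bounded, locally Lipschitz kernel, so a Picard/Gronwall argument yields existence and uniqueness; then $\bar{\mfI}$ and $\bar R$ are given by the explicit formulas, and \eqref{eqn-barIt} follows by taking $x=t+\bar x$. Since every subsequential limit coincides with this unique deterministic solution, the whole sequence converges, and convergence to a deterministic limit upgrades the weak convergence to convergence in probability, locally uniformly in $t$ and $x$.
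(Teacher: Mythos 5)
Your proposal is correct and follows essentially the same route as the paper, which itself only sketches the argument by noting that the proof is "similar to the proofs of the FLLNs in the previous sections, with the additional complication that we have one function of two parameters" and deferring details to Section 4 of \cite{PP-2021}. Your outline --- the compensated-martingale control of $\bar{A}^N$, the splitting of $\overline{\mathfrak{F}}^N$ and of $\bar{\mfI}^N$ into initial and newly-infected parts, the conditional-mean surrogates with $O(1/N)$ variance bounds, the tightness of the two-parameter process in $\bD_\bD$, and the closing uniqueness argument for the Volterra system --- is exactly the intended strategy, and you correctly identify the joint $(t,x)$ tightness as the genuinely new difficulty.
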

The proof of Theorem~\ref{thm-FLLN-PDE} is similar to the proofs of the FLLNs
 in the previous sections, with the additional complications that we have one function of two parameters.
 We refer the reader to Section 5 of \cite{PP-2021} for that proof.
 
 We now turn to deriving a PDE for the derivative with respect to $x$ of 
 $\bar{\mathfrak{I}}(t,x)$, when it exists. 
 The PDE models are linear equations with a nonlinear boundary condition, as studied in,  for example, \cite{inaba2001kermack,magal2013two,foutel2020individual}.

 In the next result, we shall assume that $F$ is absolutely continuous, $F(dx) = f(x) dx$ and denote by $\mu(x)=f(x)/F^c(x)$ the hazard function of the r.v. $\eta$. We refer to Section 3 of \cite{PP-2021} for its proof.

 \begin{prop} \label{prop-PDE}
Suppose that $F$ is absolutely continuous, with the density $f$, and that $\bar{\mfI}(0,x)$ is differentiable with respect to $x$, with the density function $\bar{\mfi}(0,x)$. Then for $t>0$, the increasing function $\bar{\mfI}(t,\cdot)$ is absolutely continuous, and $\bar{\mfi}(t,x):=\partial_x \bar{\mfI}(t,x)$  satisfies $(t,x)$ a.e. in $(0,+\infty)^2$, 
\begin{align} \label{eqn-barI-density-PDE}
 \frac{\partial \bar{\mfi}(t,x)}{\partial t} +  \frac{\partial \bar{\mfi}(t,x)}{\partial x}
 & = -\mu(x) \bar{\mfi}(t,x) \,,   
\end{align}
with the initial condition $\bar{\mfi}(0,x)= \bar{\mfI}_x(0,x)$ for $x \in [0,\bar{x}]$, and
the boundary condition
\begin{equation}\label{BC}
\bar{\mfi}(t,0)=\bar{S}(t)\int_0^{t+\bar{x}}\frac{\bar{\lambda}(x)}{\frac{F^c(x)}{F^c(x-t)}} \bar{\mfi}(t,x)dx\,,
\end{equation}
with the convention that $F^c=1$ on $\R_-$,  and that the integrand in \eqref{BC} is zero when $F^c(x)=0$. 

In addition, 
\begin{equation} \label{eqn-barS-der}
\bar{S}'(t) = -  \bar{\mfi}(t,0),\quad\text{and }\ \bar{S}(0)=1-\bar{I}(0)\,.
\end{equation}

Moreover, the PDE \eqref{eqn-barI-density-PDE} has a unique solution 
which is given as follows. For $x\ge t$, 
\begin{equation}\label{ident1}
 \bar{\mfi}(t,x)=\frac{F^c(x)}{F^c(x-t)}\bar{\mfi}(0,x-t) \, ,
 \end{equation}
 while for $t>x$,
 \begin{equation}\label{ident3}
\bar{\mfi}(t,x)=F^c(x)\bar{\mfi}(t-x,0)\,,
\end{equation}
and the boundary function is the unique solution of the integral equation
\begin{equation} \label{ident2}
\bar{\mfi}(t,0)= \left(  \bar{S}(0)-\int_0^t\bar{\mfi}(s,0)ds \right) \left( \int_0^{\bar{x}} \bar\lambda(y+t)  \bar{\mfi}(0,y)dy   + \int_0^t  \bar\lambda(t-s) \bar{\mfi}(s,0)ds \right)\,.
\end{equation}
\end{prop}

\begin{remark}
The reason why we can impose that the integrand in the right hand side of \eqref{BC} is zero whenever $F^c(x)=0$ is because $F^c(x)=0$ implies that $\bar{\mfi}(t,x)=0$ by \eqref{ident1} and \eqref{ident3}. 

In the special case  $\lambda_i(t)=\tilde\lambda(t)\bone_{t<\eta_i}$, where $\tilde\lambda(t)$ is a deterministic function, we obtain 
 $$\bar\lambda(t)=\tilde\lambda(t)F^c(t), \quad \E\big[\lambda^0(t)|\tilde\tau^N_{0}=y\big]=\tilde\lambda(t+y)\frac{F^c(t+y)}{F^c(y)}.$$ 
The boundary condition in \eqref{BC} then becomes 
 $$\bar\mfi(t,0) = \bar{S}(t) \int_0^{t+\bar{x}} \tilde\lambda(x)  \bar{\mfi}(t,x) dx$$
 This is usually how the boundary condition is imposed in the literature of PDE epidemic models (see, e.g., \cite[equation (2.5)]{inaba2001kermack}, \cite[equation (1.1)]{magal2013two} and \cite[equation (2)]{foutel2020individual}). 

\end{remark}

When  the distribution $F$ is not absolutely continuous, we denote below by $\nu$ the law of $\eta$, i.e., the measure whose distribution function is $F$, and let 
\[ G(t)=F(t^-),\ \ \ G^c(t)=1-G(t)=F^c(t^-)\,,\]
which are the left continuous versions of $F$ and $F^c$.
We will need to use $G^c(x)$ in the denominator, but not $F^c(x)$. The reason is that
if the support of $\nu$ is $[0,x_{max}]$, and $\nu(\{x_{max}\})>0$, then $F^c(x_{max})=0$, while $G^c(x_{max})>0$. 
We need a positive denominator at the point $x_{max}$, since $\nu(\{x_{max}\})>0$.

The above PDE result is generalized to the following. 

\begin{prop} \label{prop-PDE-measure}
Suppose that $\bar{\mfI}(0,x)$ is differentiable with respect to $x$, with the density function $\bar{\mfi}(0,x)$. Then for $t>0$, the increasing function $\bar{\mfI}(t,\cdot)$ is absolutely continuous, 
 and the following identity holds: 
\begin{align} \label{eqn-barI-density-PDE-mu-m}
 \frac{\partial \bar{\mfi}(t,x)}{\partial t} +  \frac{\partial \bar{\mfi}(t,x)}{\partial x}
 & = -\frac{\bar{\mfi}(t,x)}{G^c(x)}\nu(dx) \,,   
\end{align}
(i.e., the distribution which appears on the left hand side of \eqref{eqn-barI-density-PDE-mu-m} equals the measure
which has the density $-\frac{\bar{\mfi}(t,x)}{G^c(x)}$ with respect to the measure $\nu$) 
with the initial condition $\bar{\mfi}(0,x)= \bar{\mfI}_x(0,x)$ for $x \in [0,\bar{x}]$, and
the boundary condition
\begin{equation}\label{BC-m}
\bar{\mfi}(t,0)=\bar{S}(t)\int_0^{t+\bar{x}}\frac{\bar{\lambda}(x)}{\frac{G^c(x)}{G^c(x-t)}} \bar{\mfi}(t,x)dx\,,
\end{equation}
with the convention that $G^c=1$ on $\R_-$, and that the integrand in \eqref{BC-m} is zero whenever $G^c(x)=0$.

In addition, 
\begin{equation} \label{eqn-barS-der-m}
\bar{S}'(t) = -  \bar{\mfi}(t,0),\quad\text{and }\ \bar{S}(0)=1-\bar{I}(0)\,.
\end{equation}

Moreover, the PDE \eqref{eqn-barI-density-PDE-mu-m} has a unique solution 
which is given as follows. For $x\ge t$, 
\begin{equation}\label{ident1-m}
 \bar{\mfi}(t,x)=\frac{G^c(x)}{G^c(x-t)}\bar{\mfi}(0,x-t) \, ,
 \end{equation}
 while for $t>x$,
 \begin{equation}\label{ident3-m}
\bar{\mfi}(t,x)=G^c(x)\bar{\mfi}(t-x,0)\,,
\end{equation}
and the boundary function is the unique solution of the integral equation
\begin{equation} \label{ident2-m}
\bar{\mfi}(t,0)= \left(  \bar{S}(0)-\int_0^t\bar{\mfi}(s,0)ds \right) \left( \int_0^{\bar{x}} \bar\lambda(y+t)  \bar{\mfi}(0,y)dy   + \int_0^t  \bar\lambda(t-s) \bar{\mfi}(s,0)ds \right)\,.
\end{equation}
\end{prop}

\begin{remark}\label{re:leftcont}
The product $\frac{\bar{\mfi}(t,x)}{G^c(x)}\nu(dx)$ can also be rewritten as
\[ \bar{\mfi}(t,x)\times \frac{\nu(dx)}{G^c(x)}\, ,\]
where the second factor can be thought of as the ``hazard measure'', i.e., the generalization of the hazard function, of the r.v. $\eta$. 

\end{remark}

\subsection{The SIS model with infection-age dependent infectivity} \label{sec:SIS-PDE}

In the SIS model, the infectious individuals become susceptible once they recover. Since $S^N(t) + I^N(t) = N$ for each $t\ge 0$, the epidemic dynamics is determined by the process $I^N(t)$ alone, and we have the same representations of the processes $\mfI^N_0(t,x)$ and $\mfI^N_1(t,x)$ in \eqref{eqn-In0-rep} and \eqref{eqn-In1-rep}, respectively, while in the formula for $\Upsilon^N$ in \eqref{eqn-Upsilon-VI}, $S^N(t) = N - I^N(t)$.  The aggregate infectivity process $\mathfrak{F}^N(t)$ is still given by \eqref{eqn-cI-n}. 
The two processes $(\mathfrak{F}^N,\mfI^N)$ determine the dynamics of the SIS epidemic model. 
Under Assumption~\ref{AS-FLLN-Initial}, 
\begin{align*}
(\overline{\mathfrak{F}}^N, \bar{\mfI}^{N}) \to (\overline{\mathfrak{F}},\bar{\mfI}) \ \text{in probability, locally uniformly in $t$ and $x$}, \qasq N \to\infty,
\end{align*}
where 
\begin{align*} 
 \overline{\mathfrak{F}}(t) 
  &= \int_0^{\bar{x}} \bar\lambda(y+t)  \bar{\mfI}(0,dy)  + \int_0^t  \bar\lambda(t-s)  \big(1- \bar{\mfI}(s,\infty) \big) \bar{\mathfrak{F}}(s)ds\,, \\
\bar{\mfI}(t,x) 
&=  \int_0^{(x-t)^+} \frac{F^c(t+y)}{F^c(y)}  \bar{\mfI}(0, dy)   +  \int_{(t-x)^+}^t F^c(t-s) \big(1- \bar{\mfI}(s,\infty) \big) \bar{\mathfrak{F}}(s) ds\,,
\end{align*}
for $t,x \ge 0$. 
 If $\mfI(0,x)$ is differentiable and $F$ is absolutely continuous, then
the density function 
 $\bar{\mfi}(t,x)= \frac{\partial \bar{\mfI}(t,x)}{\partial x}$, if it exists, satisfies again \eqref{eqn-barI-density-PDE}.
The same calculations as in the case of the SIR model lead to \eqref{ident1}, \eqref{ident3} and
\begin{align*}
\bar{\mfi}(t,0)=\bar{S}(t)\left( \int_0^{\bar{x}} \bar\lambda(y+t)  \bar{\mfi}(0,y)dy   + \int_0^t  \bar\lambda(t-s) \bar{\mfi}(s,0)ds \right)\,. 
\end{align*}
However, the formula for $\bar{S}(t)$ is different in the case of the SIS model.  We have 
\begin{align*}
\bar{S}(t)&=1-\bar{I}(t)
=1-\int_0^{\bar{x}}\frac{F^c(t+y)}{F^c(y)}\bar{\mfi}(0,y)dy-\int_0^tF^c(t-s)\bar{\mfi}(s,0)ds\,.
\end{align*}
Thus,  the Volterra equation on the boundary reads
\begin{equation*}
\begin{split}
\bar{\mfi}(t,0)&=\left(\int_0^{\bar{x}}\bar{\lambda}(t+y)\bar{\mfi}(0,y)dy
+\int_0^t\bar{\lambda}(t-s)\bar{\mfi}(s,0)ds\right)\\ &\quad\times
\left(1-\int_0^{\bar{x}}\frac{F^c(t+y)}{F^c(y)}\bar{\mfi}(0,y)dy-\int_0^tF^c(t-s)\bar{\mfi}(s,0)ds\right)\,,
\end{split}
\end{equation*}
whose form is similar to the one for the SIR model.

%
%
%
%
%
%

Recall that the standard SIS model has a nontrivial equilibrium point $\bar{I}^*= 1-\mu/\lambda$ if $\mu<\lambda$, where  $\lambda$ is the infection rate (the bar over $\lambda$ is dropped for convenience),  and $1/\mu$ is the mean of the infectious periods. See Section 4.3 in \cite{PP-2020} for the account of the SIS model with general infectious periods. Here we consider the model in the generality of infection-age dependent infectivity.

\begin{prop}\label{prop:iast}
Suppose that $ \lim \bar{\mfI}(t,x) \to \bar{\mfI}^\ast(x)$ exists as $t\to\infty$ and $ \bar{I}^\ast =\bar{\mfI}^\ast(\infty)$.
If $R_0=\int_0^\infty \bar\lambda(y)dy\le1$,  $\bar{I}^\ast=0$ (the disease free equilibrium). In the complementary case, $R_0=\int_0^\infty \bar\lambda(y)dy>1$,  if $\bar{\mfI}(0,\bar{x})>0$,
\begin{equation} \label{eqn-barI*-formula}
 \bar{I}^\ast=1-\left(\int_0^\infty \bar\lambda(y)dy\right)^{-1} =1-\frac{1}{R_0}\,.
 \end{equation}
The density function $\bar{\mfi}(t,x)$ has an equilibrium $ \bar{\mfi}^*(x)$ in the age of infection $x$, given by
\begin{align} \label{eqn-barI*-x-formula}
 \bar{\mfi}^*(x) =  \frac{d \bar{\mfI}^*(x)}{d x} =  \bar{I}^*  \mu F^c(x),
\end{align}
where $\mu^{-1}=\int_0^\infty F^c(t)dt$ is the expectation of the duration of the infectious period. 
If $F$ has a density $f$, then the equilibrium density $ \bar{\mfi}^*(x)$ satisfies 
\begin{align*}
 \frac{d \, \bar{\mfi}^*(x)}{d x} = -  \bar{I}^*  \mu f(x), \quad  \bar{\mfi}^*(0)=  \bar{I}^*  \mu. 
\end{align*}
\end{prop}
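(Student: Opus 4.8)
The plan is to look for a stationary (time--independent) solution $\bar{\mfi}(t,x)\equiv\bar{\mfi}^\ast(x)$ of the SIS dynamics and to show that self--consistency of the boundary condition forces exactly the claimed dichotomy. First I would substitute the ansatz $\partial_t\bar{\mfi}^\ast\equiv0$ into the transport PDE \eqref{eqn-barI-density-PDE}, which collapses it to the linear first--order ODE
\begin{equation*}
\frac{d\bar{\mfi}^\ast(x)}{dx}=-\gamma(x)\,\bar{\mfi}^\ast(x),\qquad \gamma(x)=\frac{f(x)}{F^c(x)}.
\end{equation*}
Since $\gamma(x)=-\tfrac{d}{dx}\log F^c(x)$ and $F^c(0)=1$, integrating gives $\bar{\mfi}^\ast(x)=\bar{\mfi}^\ast(0)\exp\big(-\int_0^x\gamma(s)\,ds\big)=\bar{\mfi}^\ast(0)F^c(x)$. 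This already fixes the shape of the age profile up to the single unknown constant $\bar{\mfi}^\ast(0)$.

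Next I would pin down $\bar{\mfi}^\ast(0)$ in two ways. Integrating the profile over all infection ages and using $\int_0^\infty F^c(x)\,dx=\mu^{-1}$ yields $\bar{I}^\ast=\int_0^\infty\bar{\mfi}^\ast(x)\,dx=\bar{\mfi}^\ast(0)/\mu$, hence $\bar{\mfi}^\ast(0)=\mu\,\bar{I}^\ast$, which is exactly \eqref{eqn-barI*-x-formula}. To obtain a second relation I would impose the stationary boundary condition by taking the $t\to\infty$ limit in the renewal equation for $\bar{\mfi}(t,0)$: the initial--cohort term $\int_0^{\bar{x}}\bar\lambda(y+t)\bar{\mfi}(0,y)\,dy$ vanishes because $\bar\lambda\in L^1(\R_+)$ (as $R_0<\infty$) and $y$ ranges over the bounded set $[0,\bar x]$, while the convolution term with the constant rate $\bar{\mfi}^\ast(0)$ converges to $\bar{\mfi}^\ast(0)\int_0^\infty\bar\lambda(u)\,du=\bar{\mfi}^\ast(0)R_0$. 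Combined with the SIS constraint $\bar{S}^\ast=1-\bar{I}^\ast$ and $\bar{\mfi}^\ast(0)=\bar{S}^\ast\,\overline{\mathfrak{F}}^\ast$, this gives the fixed--point relation $\bar{\mfi}^\ast(0)=(1-\bar{I}^\ast)\,\bar{\mfi}^\ast(0)\,R_0$.

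From this relation the dichotomy is immediate: either $\bar{\mfi}^\ast(0)=0$, i.e.\ the disease--free equilibrium $\bar{I}^\ast=0$, or $(1-\bar{I}^\ast)R_0=1$, i.e.\ $\bar{I}^\ast=1-R_0^{-1}$. The latter lies in $(0,1)$ precisely when $R_0>1$, and is then the unique endemic equilibrium; when $R_0\le1$ the only admissible root is $\bar{I}^\ast=0$, giving \eqref{eqn-barI*-formula}. Finally, when $F$ admits a density $f$, differentiating $\bar{\mfi}^\ast(x)=\bar{I}^\ast\mu F^c(x)$ and evaluating at $0$ produces $\tfrac{d}{dx}\bar{\mfi}^\ast(x)=-\bar{I}^\ast\mu f(x)$ and $\bar{\mfi}^\ast(0)=\bar{I}^\ast\mu$, completing the proof. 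The main obstacle I anticipate is making the notion of ``equilibrium'' rigorous: one must justify passing to the $t\to\infty$ limit in the non--autonomous Volterra boundary equation and argue that the transient generated by the initially infected cohort genuinely decays, so that the stationary identity $\bar{\mfi}^\ast(0)=\bar{S}^\ast\bar{\mfi}^\ast(0)R_0$ truly characterizes the steady state rather than being a formal substitution.
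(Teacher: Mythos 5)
Your argument is correct and arrives at the paper's conclusion by an equivalent route, but the packaging differs: the paper works directly with the integral (Volterra) representation of $\bar{\mfI}(t,x)$ in the SIS case, sets $t\to\infty$ to get $\bar{\mfI}^\ast(x)=(1-\bar{I}^\ast)\,\bar{\mathfrak F}^\ast\int_0^xF^c(u)\,du$, lets $x\to\infty$ to identify the constant, and then closes the loop through the equilibrium identity $\bar{\mathfrak F}^\ast=\int_0^\infty\frac{\bar\lambda(y)}{F^c(y)}\,\bar{\mfI}^\ast(dy)$ to obtain the scalar fixed point $\bar{I}^\ast=(1-\bar{I}^\ast)\bar{I}^\ast R_0$; you instead solve the stationary transport ODE $\frac{d}{dx}\bar{\mfi}^\ast=-\gamma(x)\bar{\mfi}^\ast$ to get the profile $\bar{\mfi}^\ast(0)F^c(x)$ and close the loop through the boundary Volterra equation, obtaining $\bar{\mfi}^\ast(0)=(1-\bar{I}^\ast)\bar{\mfi}^\ast(0)R_0$. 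The two fixed-point relations are equivalent via $\bar{\mfi}^\ast(0)=\mu\bar{I}^\ast$, and both proofs treat ``equilibrium'' at the same formal level (assume the stationary limit exists and satisfies the limiting equations), so your closing caveat applies equally to the paper. Two small advantages of the paper's route: it does not need $F$ to have a density (and hence $\gamma=f/F^c$ to be defined) until the very last claim, since it manipulates the Stieltjes measure $\bar{\mfI}^\ast(dy)$ rather than the transport PDE; and it sidesteps your slightly loose justification that the initial-cohort term vanishes ``because $\bar\lambda\in L^1$'' --- integrability alone does not give pointwise decay, though here one does have $\bar\lambda(t)=\E[\lambda(t)\bone_{t<\eta}]\le\lambda^\ast F^c(t)\to0$, so the step is easily repaired.
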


\begin{proof}
 The fact that $ \bar{I}^\ast =0$ if $R_0\le1$ and $>0$ if $R_0>1$ follows from 
branching process arguments, and the fact that the start of the epidemic can be approximated by a branching process, see e.g.  section 1.3 in \cite{brittonpardoux}. 
Assume that the equilibrium $\bar{\mfI}^*(x) := \bar{\mfI}(\infty,x)$ exists. 
Then it must satisfy
\begin{align*}
\bar\mfI^\ast(x)&=(1-\bar\mfI^\ast(\infty))\int_0^xF^c(u)du\int_0^\infty\frac{\bar\lambda(y)}{F^c(y)}\bar\mfI^\ast(dy)
\\
&=(1-\bar{I}^\ast)\mu^{-1}F_e(x)\int_0^\infty\frac{\bar\lambda(y)}{F^c(y)} \bar\mfI^\ast(dy),
\end{align*}
where $F_e(x)=\mu \int_0^x F^c(s) ds$, the equilibrium (stationary excess) distribution.  Letting $x\to\infty$ in this formula, we deduce
\begin{equation*}
\bar{I}^\ast=(1-\bar{I}^\ast)\mu^{-1}\int_0^\infty\frac{\bar\lambda(y)}{F^c(y)}\bar\mfI^\ast(dy)\,.
\end{equation*}
Combining the last two equations, we obtain
\begin{equation}\label{eqn-barmfI*} 
\bar\mfI^\ast(x)=\bar{I}^\ast F_e(x)\,.
\end{equation}
Plugging this formula in the previous identity, we deduce that
\[ \bar{I}^\ast=(1-\bar{I}^\ast)\bar{I}^\ast\int_0^\infty \bar\lambda(y)dy\,.\]
Then the formula \eqref{eqn-barI*-formula} can be directly deduced from this equation. 
The formula \eqref{eqn-barI*-x-formula} follows by  taking the derivative with respect to $x$ in \eqref{eqn-barmfI*}.
\end{proof}

\subsection{On the comparison between the Markov/ODE and the non--Markov/integral equation--PDE}
The models -- both stochastic and deterministic -- which we have considered starting from Section~\ref{sec:nonmarkov} differ from the models which are mostly used in epidemic modeling, although, at least concerning the deterministic models, what we are doing is not new compared to the pioneering work of Kermack and McKendrick \cite{KMK} from 1927. 

We believe that the $\bS\bI\bR$ models considered  in Sections~\ref{sec:varinfect} and~\ref{sec:PDE} can be made, by a proper choice of the parameters, much more realistic that the Markov / ODE models of Section~\ref{sec:markov}. One may however ask the question whether those ``refined'' models make a real difference, as compared to Markov / ODE models. It is not clear that the large time behaviors of the two kind of models are significantly different. Indeed, we note that the formula for the endemic equilibrium in the $\bS\bI\bS$ model \eqref{eqn-barI*-formula} in Proposition~\ref{prop:iast} is exactly the same, when expressed in terms of the basic reproduction number $R_0$, as the formula obtained in Section~\ref{sec:sism}.

On the other hand, the transitory behavior can be drastically different in the two types of models. 
Indeed, we can implement in our ``refined'' model the memory of recent situations of the epidemic, so that when the rate of contact between individuals changes drastically (e.g. when a government enforces a lockdown), in the ``refined'' model the number of daily infections will take more time to go down than in the ODE model, see Figure~\ref{Fig:4}. 
Some authors who use ODE models correct that behavior by making the contact rate change gradually after the time of lockdown, which does not correspond to the behavior of the population. 
As observed several times and in various places during the Covid-19 pandemic, daily infections, as well as hospital admissions and hospital deaths, continue to grow for a relatively long time (up to a few weeks) after strict preventive measures are taken, a pattern that arises naturally in models with memory but is much harder to reproduce using ODE models. 
This behavior has consequences both for inference methods and for decision making, since ODE models can underestimate the inertia of the epidemic on the short term.

 
 \begin{figure}
     \includegraphics[width=.6\textwidth]{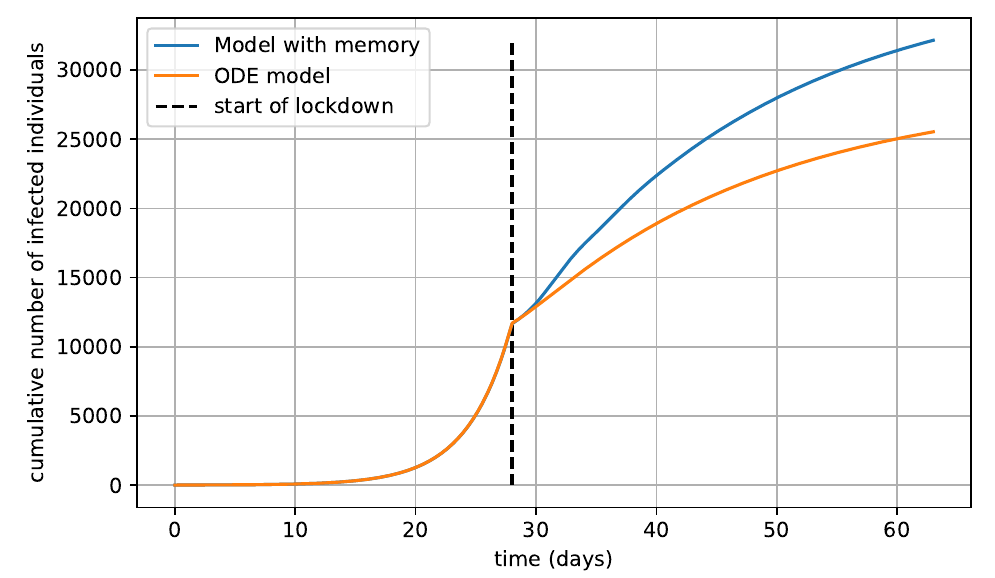}
     \caption{Cumulative number of infected individuals through time in two models: an ODE model obtained as the limit of a Markov stochastic SEIR model and a model with memory obtained as a limit of a non-Markov SEIR model. Both models have the same mean exposed and infectious period, and are chosen so that they have the same initial growth rate. After 28 days, the contact rate is reduced instantly, in such a way that both models should have the same rate of decay of newly infected individuals. We see that the epidemic in the model with memory ``slows down'' less rapidly than in the ODE model, due to its greater inertia, which causes a larger number of individuals to be infected.}
     \label{Fig:4}
\end{figure}

\section{Models with varying infectivity and immunity}\label{sec:varinfect-varimmun}

As in the previous sections, we start with a population of fixed size $N$, and we enumerate the individuals in the population with the parameter $k$, $1\le k\le N$.

The model in this section is a sort of $\bS\bI\bR\bS$ model, except that we do not really distinguish between the states $\bR$ and $\bS$. We shall only consider the compartments $\bS$ and $\bI$, and formally our model is a $\bS\bI\bS$ model, although individuals experience immunity after recovery, before being susceptible again.
Each individual who is infected first draws a random infectivity function, as in the previous section, and may infect other individuals as before.
At the end of the infectious period, the individual is first immune (\textit{i.e.,} its susceptibility is equal to zero), but this acquired immunity then wanes with time, and we assume that the susceptibility of the individual increases gradually according to some random function.
When such a partially susceptible individual is the target of an infectious contact, the probability that this individual becomes reinfected is given by its susceptibility (so this quantity evolves between 0 and 1). Whenever an individual in the compartment $\bS$ has susceptibility $0$, he/she is in fact immune.

Since each individual might get infected an arbitrary number of times, and the infectivity and susceptibility functions of the infection age are a priori different after each new infection, we attach to each individual a countable family of infectivity and susceptibility functions. More precisely, we consider two mutually independent families $(\lambda_{k,i},\gamma_{k,i})_{1\le k\le N,\ i\ge0}$ and $(\lambda_{k,0},\gamma_{k,0})_{1\le k\le N}$ of i.i.d. elements of $\bD^2$, which are such that  all $\lambda_{k,i}$ take values in $[0,\lambda^\ast]$ and all $\gamma_{k,i}$ take values in $[0,1]$. The last quantities represent the infectivity and susceptibility starting at time $t=0$, while for $i\ge1$, $(\lambda_{k,i},\gamma_{k,i})$ represents the infectivity and susceptibility of the $k$--th individual after his/her $i$--th infection (not counting a possible infection before time $0$). 

At time $0$,  individual $k$ can be susceptible (or ``naive''). In that case,  $\lambda_{k,0}(t)\equiv0$ and $\gamma_{k,0}(t)\equiv1$. A second possibility is that individual $k$ is infected. In that case $\lambda_{k,0}\ge0$ and 
$\gamma_{k,0}(0)=0$. A third possibility is that  individual $k$ has recovered at time $0$ from a past infection. In that case, $\lambda_{k,0}(t)\equiv0$ and the function $\gamma_{k,0}$ is arbitrary.

In addition to what has been explained above, we assume that  
\begin{equation*}
\sup\{t\ge0,\ \lambda_{k,i}(t)>0\}\le \inf\{t\ge0,\ \gamma_{k,i}(t)>0\}. 
\end{equation*}
See Figure \ref{fig:lg} for an example of a pair $(\lambda,\gamma)$.
\begin{figure}
     \includegraphics[width=.6\textwidth]{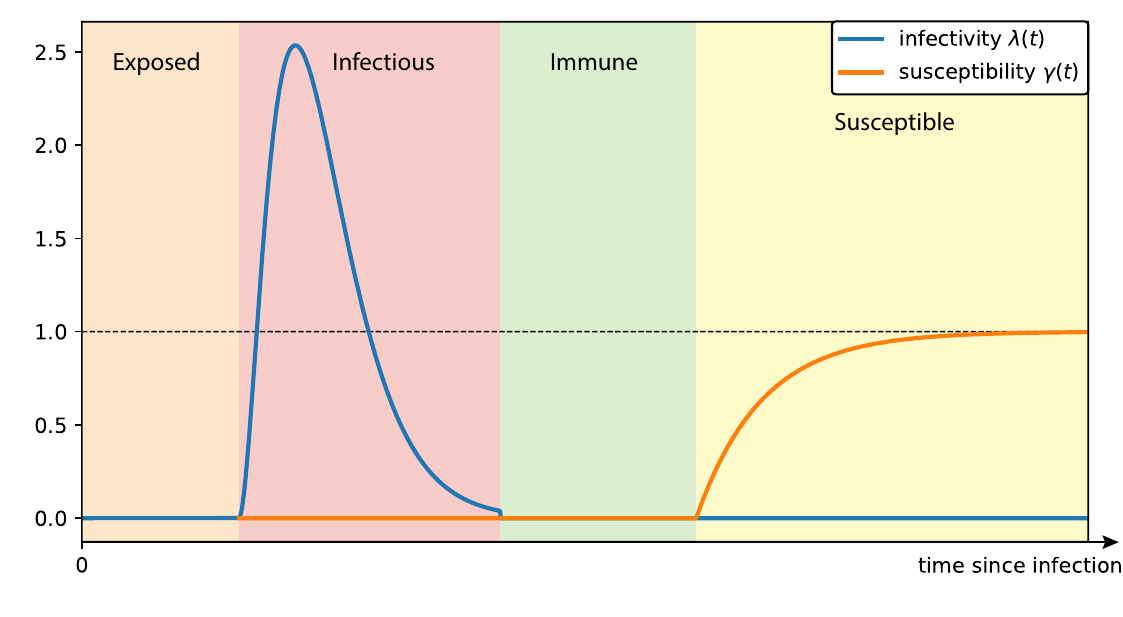}
     \caption{An example of a pair of functions $(\lambda,\gamma)$ which satisfies our assumptions.}
     \label{fig:lg}
\end{figure}

We introduce the following notations:
\begin{align*}
&\eta_{k,0}=\sup\{t\ge0,\ \lambda_{k,0}(t)>0\}, \\
&\bar{I}(0) = \P(\eta_{1,0} > 0)\\
&\bar{\lambda}^0(t)=\E\left[\lambda_{1,0}(t)|\eta_{1,0}>0\right],\\ 
&\bar{\lambda}(t)=\E\left[\lambda_{1,1}(t)\right]\,. 
\end{align*}

Let us now describe our individual based stochastic model. Contrary to what we did in the previous sections, we do not just count the number of infections in the population on the time interval $(0,t]$. We shall denote by $A^N_k(t)$ the number of times that the individual $k$ has been infected on the time interval $(0,t]$. Let $\sigma^N_k(t)$ denote the age of infection of the individual $k$ at time $t$, \textit{i.e.},
\[ \sigma^N_k(t):=t-\sup\{s\in[0,t],\ A^N_k(s)=A^N_k(s^-)+1\}\,,\]
with the convention that the sup of an empty set is $0$.
At time $t$, the infectivity of the individual $k$ is $\lambda_{k,A^N_k(t)}(\sigma^N_k(t))$, and its susceptibility is 
$\gamma_{k,A^N_k(t)}(\sigma^N_k(t))$. Note that in the case where $A^N_k(t)=0$, we recover the above description of the situation prior to the first (re)infection.

The total force of infection in the population at time $t$ is
\begin{equation*} 
\mathfrak{F}^N(t):=\sum_{k=1}^N\lambda_{k,A^N_k(t)}(\sigma^N_k(t))\, .
\end{equation*}
According to the above description, we expect that the rate at which the individual $k$ gets infected is
\begin{equation*}
\Upsilon_k^N(t)=N^{-1}\gamma_{k,A^N_k(t)}(\sigma^N_k(t))\mathfrak{F}^N(t)=\gamma_{k,A^N_k(t)}(\sigma^N_k(t))\bar{\mathfrak{F}}^N(t)\,, 
\end{equation*}
where $\bar{\mathfrak{F}}^N(t)=N^{-1}\mathfrak{F}^N(t)$.

Let now $\{Q_k,\ 1\le k\le N\}$ be a collection of mutually independent standard Poisson random measures 
on $\R_+^2$.
We assume that the number of infections endured by the individual $k$ on the interval $(0,t]$ is given by
\[ A^N_k(t)=\int_0^t\int_0^\infty{\bf1}_{u\le\Upsilon_k^N(s^-)}Q_k(du,ds)\,.\]
We finally define the average susceptibility in the population as
\begin{equation*} 
\bar{\mathfrak{S}}^N(t)=N^{-1}\sum_{k=1}^N\gamma_{k,A^N_k(t)}(\sigma^N_k(t))\,.
\end{equation*}

We will show that the pair $(\bar{\mathfrak{S}}^N(t),\bar{\mathfrak{F}}^N(t))$ converges to a deterministic pair 
$(\bar{\mathfrak{S}}(t),\bar{\mathfrak{F}}(t))$, locally uniformly in time. Before we state this convergence result, let us study the limiting equation. Let 
$\bD_+$ denote the subset of $\bD$ consisting of non negative functions, $(x,y)\in \bD_+^2$ be a solution to the following set of equations: 
\begin{equation}\label{eqlimit}
\left\{
\begin{aligned}
x(t)&=\E\left[\gamma^0(t)\exp\left(-\int_0^t\gamma^0(r)y(r)dr\right)\right]\\&\quad
+\int_0^t\E\left[\gamma(t-s)\exp\left(-\int_s^t\gamma(r-s)y(r)dr\right)\right]x(s)y(s)ds,\\
y(t)&=\bar{I}(0)\bar{\lambda}^0(t)+\int_0^t\bar{\lambda}(t-s)x(s)y(s)ds\, .
\end{aligned}
\right.
\end{equation}

\begin{prop}
Equation \eqref{eqlimit} has a unique solution in $\bD_+^2$.
\end{prop}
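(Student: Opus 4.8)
The plan is to recast \eqref{eqlimit} as a fixed point problem and to solve it by a Picard/Banach argument on each finite interval, using the boundedness of the data together with a global \emph{a priori} bound to rule out blow-up. Write $(x,y)=\Phi(x,y)$, where $\Phi=(\Phi_1,\Phi_2)$ is given by the two right-hand sides of \eqref{eqlimit}, fix $T>0$, and regard $\Phi$ as a map on $\bD([0,T])^2$. First I would check that $\Phi$ maps $\bD([0,T])^2$ into itself: for $(x,y)$ càdlàg the inner integrals $\int_0^t\gamma^0(r)y(r)\,dr$ and $\int_s^t\gamma(r-s)y(r)\,dr$ are continuous in their arguments, so the integrands $\gamma^0(t)\exp(-\int_0^t\gamma^0 y)$ and $\gamma(t-s)\exp(-\int_s^t\gamma(\cdot-s)y)$ are càdlàg and bounded by $1$; since $\gamma^0,\gamma\in[0,1]$ and $\bar\lambda^0,\bar\lambda\le\lambda^*$, dominated convergence shows that the expectations define càdlàg functions of their arguments, and the $ds$-integrals are continuous in $t$. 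Hence $\Phi_1(x,y),\Phi_2(x,y)\in\bD([0,T])$.

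Next I would establish local existence and uniqueness by a contraction argument. For two pairs $(x_i,y_i)$ bounded by $R$ on $[0,T]$, the elementary inequalities $|e^{-a}-e^{-b}|\le|a-b|$ (for $a,b\ge0$) and $0\le e^{-a}\le1$, combined with $\gamma^0,\gamma\in[0,1]$ and $\bar\lambda^0,\bar\lambda\le\lambda^*$, yield pointwise Lipschitz bounds of the form $|\Phi_1(x_1,y_1)(t)-\Phi_1(x_2,y_2)(t)|\le C(R,T)\int_0^t(|x_1-x_2|+|y_1-y_2|)(s)\,ds$ and similarly for $\Phi_2$: the $\int\gamma y$ terms inside the exponentials are controlled by the first inequality, and the bilinear factors $x\,y$ by $|x_1y_1-x_2y_2|\le R(|x_1-x_2|+|y_1-y_2|)$. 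Consequently $\Phi$ is a contraction on $\bD([0,T_0])^2$ in the sup norm for $T_0$ small enough, and a direct check shows that it maps a suitable closed ball into itself, so Banach's theorem gives a unique solution on $[0,T_0]$. Non-negativity $x,y\ge0$ is preserved along the Picard iterates because all kernels and all the data are non-negative.

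It remains to extend this to all of $\R_+$, which is where the main difficulty lies. Because the two equations are coupled through the bilinear term $x(s)y(s)$, a naive Grönwall estimate only bounds $\int_0^t xy$ by a Riccati-type inequality that may blow up in finite time, so the global bound must exploit the structure. The decisive \emph{a priori} estimate is $0\le x(t)\le1$, reflecting that $x=\bar{\mathfrak{S}}$ is a mean susceptibility valued in $[0,1]$, and stemming from the negative feedback of the survival factors $\exp(-\int\gamma y)$ (a larger force of infection $y$ depresses these exponentials and hence $x$). Concretely, viewing the first equation of \eqref{eqlimit} as a \emph{linear} Volterra equation in $x$ with non-negative kernel once $y$ is given, one checks that the constant function $1$ is a supersolution, using $\gamma^0,\gamma\le1$, and the comparison principle for positive-kernel Volterra equations then gives $x\le1$. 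With $x\le1$ in hand, the second equation yields $y(t)\le\lambda^*\bar{I}(0)+\lambda^*\int_0^t y(s)\,ds$, whence $y(t)\le\lambda^*\,e^{\lambda^* t}$ by Grönwall. These bounds are uniform on compact sets, so the local solution extends to every $[0,T]$ and hence to $\R_+$. Uniqueness on $\R_+$ then follows by subtracting two solutions, applying the Lipschitz bounds above on each $[0,T]$ together with the \emph{a priori} bounds, and invoking Grönwall's lemma for $|x_1-x_2|+|y_1-y_2|$. The genuine obstacle is the supersolution/comparison step securing $x\le1$; everything else is a routine consequence of the boundedness of $\gamma^0,\gamma,\bar\lambda^0,\bar\lambda$ and the Lipschitz continuity of the exponential.
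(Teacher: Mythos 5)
Your overall architecture (Picard iteration on a short interval, an a priori bound $x\le 1$, Gronwall for $y$, then extension and uniqueness by Gronwall) is the same as the paper's, but the justification of the one step you yourself single out as decisive --- the bound $x(t)\le 1$ --- does not work as stated. You freeze $y$, view the first equation of \eqref{eqlimit} as a linear Volterra equation $x=a+\tilde K\ast x$ with $a(t)=\E\big[\gamma^0(t)e^{-\int_0^t\gamma^0(r)y(r)dr}\big]$ and $\tilde K(t,s)=\E\big[\gamma(t-s)e^{-\int_s^t\gamma(r-s)y(r)dr}\big]\,y(s)$, and claim that the constant function $1$ is a supersolution ``using $\gamma^0,\gamma\le1$''. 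That requires $a(t)+\int_0^t\tilde K(t,s)\,ds\le 1$, which is false in general: take $\gamma^0(u)=\bone_{u\ge 2}$, $\gamma(u)=\bone_{u\ge 1}$ (both deterministic and admissible) and $y\equiv c>0$; then for $t\ge 2$ one computes $a(t)+\int_0^t\tilde K(t,s)\,ds=e^{-c(t-2)}+1-e^{-c(t-1)}>1$. The pointwise bounds $\gamma^0,\gamma\le1$ give equality or less only in special cases (e.g.\ $\gamma^0=\gamma$ deterministic); when $\gamma^0$ and $\gamma$ have different laws the test function $1$ fails the comparison test even though the conclusion $x\le1$ is still true. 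Your other global estimate is fine, but it is downstream of this broken step.

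The correct argument, which is the one in the paper, exploits that $x$ solves the coupled system rather than the frozen linear one. Dropping the prefactors $\gamma^0(t)\le1$ and $\gamma(t-s)\le1$ gives $x(t)\le U(t)$ with
\begin{equation*}
U(t)=\E\Big[e^{-\int_0^t\gamma^0(r)y(r)dr}\Big]+\int_0^t\E\Big[e^{-\int_s^t\gamma(r-s)y(r)dr}\Big]x(s)y(s)\,ds,
\end{equation*}
and differentiating,
\begin{equation*}
U'(t)=x(t)y(t)-y(t)\left(\E\Big[\gamma^0(t)e^{-\int_0^t\gamma^0(r)y(r)dr}\Big]+\int_0^t\E\Big[\gamma(t-s)e^{-\int_s^t\gamma(r-s)y(r)dr}\Big]x(s)y(s)\,ds\right)=0
\end{equation*}
by the first equation of \eqref{eqlimit} multiplied by $y(t)$. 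Hence $U\equiv U(0)=1$ and $x\le1$: the new-infection source term and the depletion of the survival factors cancel exactly, which is precisely the feedback your linear comparison discards. Once $x\le1$ is secured this way, the remainder of your proof (Gronwall giving $y(t)\le e^{\lambda^*t}$, global extension, and uniqueness via the Lipschitz estimates and Gronwall) goes through and matches the paper.
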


\begin{proof}
We first prove uniqueness. We need an a priori bound on the solutions.
Suppose $(x,y)$ is a non negative solution of \eqref{eqlimit}, \textit{i.e.,} a solution satisfying $x(t)\ge0, y(t)\ge0$ for all $t\ge0$. 
Since $\gamma^0(t)\le 1$ and $\gamma(t)\le1$, we deduce from the first equation that
\begin{align} \label{tot_size_det}
	x(t)\le\E\left[\exp\left(-\int_0^t\gamma^0(r)y(r)dr\right)\right]
	+\int_0^t\E\left[\exp\left(-\int_s^t\gamma(r-s)y(r)dr\right)\right]x(s)y(s)ds\,.
\end{align}
If we multiply the first equation in \eqref{eqlimit} by $y(t)$, we obtain an identity which shows that the derivative with respect to $t$ of the right hand side of the above inequality is zero, hence that upper bound equals its value at time $t=0$, which is $1$. We have proved that $x(t)\le1$. Next from the second equation and Gronwall's Lemma we deduce that $y(t)\le\exp(\lambda^\ast t)$. With the help of those bounds, it is not very hard to show that \eqref{eqlimit} has at most one negative solution. 

Existence can be shown using a Picard iteration procedure, thanks to the estimates which are used for uniqueness. 
Note that the solution starts with $x(0)>0$ and $y(0)>0$, and it is not hard to see that neither $x$ nor $y$ can hit $0$ in finite time.
\end{proof}

We can now state the main result of this section. 
\begin{theorem}\label{th-LLN-VI-VS}
Under the above assumptions, as $N\to\infty$, 
$(\bar{\mathfrak{S}}^N,\bar{\mathfrak{F}}^N)\to(\bar{\mathfrak{S}},\bar{\mathfrak{F}})$ in $\bD^2$ in probability,
where $(\bar{\mathfrak{S}},\bar{\mathfrak{F}})$ is the unique solution of \eqref{eqlimit}.
\end{theorem}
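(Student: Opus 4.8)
The plan is to derive from the individual dynamics a pair of approximate integral equations for $(\bar{\mathfrak{S}}^N,\bar{\mathfrak{F}}^N)$ whose error terms vanish in probability and which coincide, up to those errors, with the fixed-point system \eqref{eqlimit}; combined with the uniqueness supplied by the preceding Proposition and a Gr\"onwall estimate, this upgrades to convergence in probability. First I would record the a priori bounds: since $\lambda_{k,i}\in[0,\lambda^\ast]$ and $\gamma_{k,i}\in[0,1]$, one has $0\le\bar{\mathfrak{F}}^N(t)\le\lambda^\ast$, $0\le\bar{\mathfrak{S}}^N(t)\le1$, and the infection rates $\Upsilon^N_k(t)\le\lambda^\ast$. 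Summing the individual rates, the pooled counting process $A^N(t):=\sum_{k=1}^N A^N_k(t)$ has $\sF^N_t$--compensator $N\int_0^t\bar{\mathfrak{S}}^N(s)\bar{\mathfrak{F}}^N(s)ds$, so $\bar A^N:=N^{-1}A^N$ satisfies $\bar A^N(t)=\int_0^t\bar{\mathfrak{S}}^N(s)\bar{\mathfrak{F}}^N(s)ds+\bar M^N(t)$ with $\E[(\bar M^N(t))^2]\le\lambda^\ast t/N\to0$.

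The force of infection can then be put in the form already analysed in Theorem~\ref{thm-FLLN-VI}. The key structural input is the non-overlap assumption $\sup\{t:\lambda_{k,i}(t)>0\}\le\inf\{t:\gamma_{k,i}(t)>0\}$, which forces each inter-infection interval of a given individual to exceed its infectious period; hence at time $t$ at most one of $\{\lambda_{k,\ell}(t-T^N_{k,\ell})\}_{\ell\ge1}$ is nonzero (the current one, with $T^N_{k,\ell}$ the $\ell$--th infection time of $k$), and the force of infection telescopes into
\begin{equation*}
\bar{\mathfrak{F}}^N(t)=\frac1N\sum_{k=1}^N\lambda_{k,0}(t)+\frac1N\sum_{i=1}^{A^N(t)}\lambda_{(i)}(t-\tau^N_i),
\end{equation*}
where $\tau^N_1<\tau^N_2<\cdots$ are the pooled infection times and $\lambda_{(i)}$ the associated i.i.d. marks of law $\lambda$. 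Exactly as in Theorem~\ref{thm-FLLN-VI}, the law of large numbers for i.i.d. random elements of $\bD$ gives $N^{-1}\sum_k\lambda_{k,0}(t)\to\bar{I}(0)\bar{\lambda}^0(t)$, while conditioning on the infection times, replacing $\lambda_{(i)}$ by $\bar{\lambda}$, and bounding the fluctuation by a conditional second-moment estimate as for $V^N$ in the proof of Theorem~\ref{thm-FLLN-SIR} yields
\begin{equation*}
\bar{\mathfrak{F}}^N(t)=\bar{I}(0)\bar{\lambda}^0(t)+\int_0^t\bar{\lambda}(t-s)\,\bar{\mathfrak{S}}^N(s)\bar{\mathfrak{F}}^N(s)\,ds+\varepsilon^N_{\mathfrak{F}}(t),\qquad\sup_{[0,T]}|\varepsilon^N_{\mathfrak{F}}|\to0\text{ in probability}.
\end{equation*}

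The delicate step, and the main obstacle, is the companion equation for $\bar{\mathfrak{S}}^N$: susceptibility does not vanish after recovery, so it depends on the \emph{last} infection through the survival event ``no reinfection since''. For each $k$ I would decompose $\gamma_{k,A^N_k(t)}(\sigma^N_k(t))$ as $\gamma_{k,0}(t)\bone_{\{A^N_k(t)=0\}}+\sum_{\ell\ge1}\gamma_{k,\ell}(t-T^N_{k,\ell})\bone_{\{k\text{ not reinfected on }(T^N_{k,\ell},t]\}}$. Conditionally on the mark functions and on the realized force of infection, the infections of $k$ form an inhomogeneous Poisson process driven by $Q_k$ whose intensity after a (re)infection at $s$ with mark $\gamma$ is $\gamma(\cdot-s)\bar{\mathfrak{F}}^N(\cdot)$, so the conditional probability of no reinfection on $(s,t]$ is the exponential $\exp(-\int_s^t\gamma(r-s)\bar{\mathfrak{F}}^N(r)dr)$, and likewise $\exp(-\int_0^t\gamma^0(r)\bar{\mathfrak{F}}^N(r)dr)$ for the still-naive individuals. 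Averaging the conditional expectations over the i.i.d. marks and over the conditionally independent individuals, and using that the infection-time density at $s$ is $\bar{\mathfrak{S}}^N(s)\bar{\mathfrak{F}}^N(s)$, I expect
\begin{equation*}
\bar{\mathfrak{S}}^N(t)=\E\Big[\gamma^0(t)e^{-\int_0^t\gamma^0(r)\bar{\mathfrak{F}}^N(r)dr}\Big]+\int_0^t\E\Big[\gamma(t-s)e^{-\int_s^t\gamma(r-s)\bar{\mathfrak{F}}^N(r)dr}\Big]\bar{\mathfrak{S}}^N(s)\bar{\mathfrak{F}}^N(s)\,ds+\varepsilon^N_{\mathfrak{S}}(t),
\end{equation*}
with $\sup_{[0,T]}|\varepsilon^N_{\mathfrak{S}}|\to0$ in probability. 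The genuinely subtle point is the self-consistency: the $\bar{\mathfrak{F}}^N$ inside the exponentials is random and itself depends on the individual whose survival is being computed. I would neutralise this by freezing the intensity at a common path and comparing the true individual processes with i.i.d. copies driven by that path through the same $Q_k$; the $O(1/N)$ influence of one individual on the aggregate intensity, together with the Lipschitz bound $|e^{-a}-e^{-b}|\le|a-b|$ on the survival factors, makes the discrepancy negligible after a Gr\"onwall estimate in $\int_0^t|\bar{\mathfrak{F}}^N-\bar{\mathfrak{F}}|$.

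Finally I would close the argument. The two displayed identities exhibit $(\bar{\mathfrak{S}}^N,\bar{\mathfrak{F}}^N)$ as an approximate fixed point of the map defining \eqref{eqlimit}, which on $[0,T]$ is Lipschitz in the uniform norm by virtue of the uniform bounds $\bar{\mathfrak{F}}^N\le\lambda^\ast$, $\bar{\mathfrak{S}}^N\le1$ and of the a priori bounds $0\le x\le1$, $0\le y\le e^{\lambda^\ast T}$ already used for the limit. Subtracting \eqref{eqlimit} and setting $\Delta^N(t):=\sup_{s\le t}\big(|\bar{\mathfrak{S}}^N(s)-\bar{\mathfrak{S}}(s)|+|\bar{\mathfrak{F}}^N(s)-\bar{\mathfrak{F}}(s)|\big)$ gives $\Delta^N(t)\le C_T\int_0^t\Delta^N(s)ds+o_P(1)$, so Gr\"onwall's lemma yields $\sup_{[0,T]}\Delta^N\to0$ in probability. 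As $(\bar{\mathfrak{S}},\bar{\mathfrak{F}})$ is the unique solution of \eqref{eqlimit}, this is precisely the asserted convergence in $\bD^2$ in probability.
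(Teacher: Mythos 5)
Your overall strategy is sound and its crucial technical ingredient is in fact the same as the paper's: the step you single out as ``the genuinely subtle point'' --- comparing each individual's true dynamics with an i.i.d.\ copy driven through the same $Q_k$ by a frozen deterministic path, and closing a Gr\"onwall estimate in $\int_0^t|\bar{\mathfrak{F}}^N-\bar{\mathfrak{F}}|$ --- is precisely the content of the paper's Lemma~\ref{le:Nk-k}, which yields $\E[\sup_{t\le T}|A^N_k(t)-A_k(t)|]\le \lambda^\ast T e^{2\lambda^\ast T}/\sqrt{N}$. Where you diverge is in the packaging. The paper first characterizes the limit through the auxiliary fixed-point problem of Lemma~\ref{le:m} (the unique $m^\ast$ with $\Psi^{(m^\ast)}=m^\ast$ is the second component of the solution of \eqref{eqlimit}, and the self-consistency you worry about is resolved exactly by this fixed-point property), and then concludes by exchangeability of $(A^N_k,\sigma^N_k)_{k\le N}$ together with Sznitman's Proposition I.2.2 \cite{sznitman}, i.e.\ a propagation-of-chaos argument, rather than by exhibiting $(\bar{\mathfrak{S}}^N,\bar{\mathfrak{F}}^N)$ as an approximate solution of \eqref{eqlimit} and invoking Lipschitz stability. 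Your route buys a more self-contained closing argument (no appeal to an external chaos lemma); the paper's buys a cleaner treatment of the dependence structure, since once the coupling is in place \emph{both} empirical averages converge by the plain LLN for the i.i.d.\ auxiliary system.

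One step you should not wave through: the claim that the approximate equation for $\bar{\mathfrak{F}}^N$ follows ``exactly as in Theorem~\ref{thm-FLLN-VI}'' by conditioning on the pooled infection times and treating the marks $\lambda_{(i)}$ as i.i.d. In the single-infection models this works because the orthogonality relations $\E[\kappa^N_i(t)\kappa^N_j(t)\,|\,\sF^N_t]=0$ can be verified; here each individual's infectivity mark $\lambda_{k,\ell}$ is drawn \emph{jointly} with the susceptibility mark $\gamma_{k,\ell}$ that governs that same individual's next reinfection time, so the marks attached to the pooled infection times are no longer conditionally centered given the infection-time filtration, and the second-moment argument does not carry over verbatim. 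This is exactly why the paper abandons the direct route for this model. The gap is repairable within your own framework --- the coupling you introduce for the $\bar{\mathfrak{S}}^N$ equation delivers the $\bar{\mathfrak{F}}^N$ equation by the same token, since $\E[\lambda_{A_k(t)}(\sigma_k(t))]=\bar{\mathfrak{F}}(t)$ for the i.i.d.\ copies --- but as written the first displayed approximation is not justified independently of the coupling, so you should derive both equations from it rather than only the second.
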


In other words, the pair $(\bar{\mathfrak{S}},\bar{\mathfrak{F}})$ solves the system of integral equations
\begin{equation}\label{eqlimit2}
\left\{
\begin{aligned}
\bar{\mathfrak{S}}(t)&=\E\left[\gamma^0(t)\exp\left(-\int_0^t\gamma^0(r)\bar{\mathfrak{F}}(r)dr\right)\right]\\&\quad
+\int_0^t\E\left[\gamma(t-s)\exp\left(-\int_s^t\gamma(r-s)\bar{\mathfrak{F}}(r)dr\right)\right]\bar{\mathfrak{S}}(s)\bar{\mathfrak{F}}(s)ds,\\
\bar{\mathfrak{F}}(t)&=\bar{I}(0)\bar{\lambda}^0(t)+\int_0^t\bar{\lambda}(t-s)\bar{\mathfrak{S}}(s)\bar{\mathfrak{F}}(s)ds\, .
\end{aligned}
\right.
\end{equation}
\begin{remark}
The second equation in \eqref{eqlimit2} is equation \eqref{eqn-barfrakI}. In the particular case of the 
$\bS\bI\bR$ model, $\gamma^0(t)\equiv1$ if the individual is susceptible at time $0$, and $\equiv0$ otherwise.
Moreover, $\gamma(t)\equiv0$. In that case, the first equation in \eqref{eqlimit2} reduces to
\[ \bar{\mathfrak{S}}(t)=\bar{S}(0)\exp\left(-\int_0^t\bar{\mathfrak{F}}(s)ds\right).\]
So our new result is consistent with Theorem \ref{thm-FLLN-VI}.
\end{remark}

We will sketch the proof of this Theorem, refering the reader to \cite{FPPZ2021} for the details.
The main idea of this proof  is to  replace the collection 
$\{A^N_k,\ 1\le k\le N\}$ by an i.i.d. sequence of random processes, which will be close to that collection in an appropriate sense. The idea of the construction of that sequence is the following.
$A^N_k$ depends upon $N$ only through $\bar{\mathfrak{F}}^N$, which is a mean field interaction. 
We shall compare that sequence which an i.i.d. sequence, obtained by replacing $\bar{\mathfrak{F}}^N$ by its limit $\bar{\mathfrak{F}}$. 

Let $Q$ be a standard Poisson Random Measure on $\R_+^2$, and $(\lambda_i,\gamma_i)_{i\ge1}$ an i.i.d. sequence, each one having the law of $(\lambda_{1,1},\gamma_{1,1})$, and which is globally independent of $Q$.
Also take $ (\lambda_0, \gamma_0) $ independent of the previous sequence and distributed as $ (\lambda_{1,0}, \gamma_{1,0}) $.
To each deterministic $m\in \bD_+$, we associate the solution $A^{(m)}(t)$ of the following SDE:
\begin{equation}\label{eq:m}
\left\{
\begin{aligned}
A^{(m)}(t)&=\int_0^t\int_0^\infty{\bf1}_{u\le\Upsilon^{(m)}(s^-)}Q(ds,du),\\
\Upsilon^{(m)}(t)&=\gamma_{A^{(m)}(t)}(\sigma^{(m)}(t))\times m(t)\,,
\end{aligned}
\right.
\end{equation}
where 
\[ \sigma^{(m)}(t):=t-\sup\{s\in[0,t],\ A^{(m)}(s)=A^{(m)}(s^-)+1\}\vee0\,.\]
Let now 
\[\Psi^{(m)}(t):=\E\left[\lambda_{A^{(m)}(t)}(\sigma^{(m)}(t))\right], \quad \Theta^{(m)}(t):=\E\left[\gamma_{A^{(m)}(t)}(\sigma^{(m)}(t))\right]\,.\]
 The next Lemma is crucial for our proof.
 \begin{lemma}\label{le:m}
 The exists a unique $m^\ast\in \bD_+$ such that 
$\Psi^{(m^\ast)}=m^\ast$. Moreover, $(\Theta^{(m)},m)$ is the unique solution of \eqref{eqlimit} iff 
$\Psi^{(m)}=m$.
 \end{lemma}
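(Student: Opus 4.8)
The plan is to show that, for \emph{every} $m\in\bD$, the two expectations $\Theta^{(m)}$ and $\Psi^{(m)}$ satisfy a pair of renewal identities, namely
\begin{align*}
\Psi^{(m)}(t) &= \bar{I}(0)\bar{\lambda}^0(t) + \int_0^t \bar{\lambda}(t-s)\,\Theta^{(m)}(s)\,m(s)\,ds, \\
\Theta^{(m)}(t) &= \E\left[\gamma^0(t)\exp\left(-\int_0^t \gamma^0(r) m(r)\,dr\right)\right] + \int_0^t \E\left[\gamma(t-s)\exp\left(-\int_s^t \gamma(r-s) m(r)\,dr\right)\right]\Theta^{(m)}(s)\,m(s)\,ds,
\end{align*}
and then to read off the Lemma by comparing these with \eqref{eqlimit}. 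Granting these identities, the fixed-point condition $\Psi^{(m)}=m$ turns the first line into the second equation of \eqref{eqlimit} with $y=m$, $x=\Theta^{(m)}$, while the second line is exactly the first equation of \eqref{eqlimit} with the same $x,y$; hence $\Psi^{(m)}=m$ forces $(\Theta^{(m)},m)$ to solve \eqref{eqlimit}.

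The derivation of the two identities is the probabilistic heart of the proof and rests on the \emph{regenerative} structure of $A^{(m)}$: at each reinfection a fresh pair $(\lambda_i,\gamma_i)$, independent of the past, is attached, so the state $(A^{(m)}(t),\sigma^{(m)}(t))$ depends only on the age elapsed since the last reinfection before $t$ (or since time $0$ when $A^{(m)}(t)=0$). I would decompose $\E[g(A^{(m)}(t),\sigma^{(m)}(t))]$ according to that last reinfection time $s$. On $\{A^{(m)}(t)=0\}$ the individual is still in its initial block, and conditioning on $(\lambda^0,\gamma^0)$ the probability of no reinfection on $(0,t]$ equals the survival factor $\exp(-\int_0^t\gamma^0(r)m(r)\,dr)$, giving the first terms. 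For the last block started at time $s$, the expected rate of opening a new block at $s$ is $\E[\gamma_{A^{(m)}(s)}(\sigma^{(m)}(s))]\,m(s)=\Theta^{(m)}(s)\,m(s)$, and, conditionally on the fresh $(\lambda,\gamma)$, the survival to $t$ contributes $\exp(-\int_s^t\gamma(r-s)m(r)\,dr)$ while the current age is $t-s$; this yields the integral terms. The $\lambda$-terms then simplify using the support assumption $\sup\{t:\lambda(t)>0\}\le\inf\{t:\gamma(t)>0\}$: whenever $\lambda^0(t)>0$ (resp. $\lambda(t-s)>0$) the corresponding susceptibility vanishes on the relevant interval, so the exponential factor is $1$ and the expectations collapse to $\E[\lambda^0(t)]=\bar{I}(0)\bar{\lambda}^0(t)$ (resp. $\E[\lambda(t-s)]=\bar{\lambda}(t-s)$), producing the stated form of $\Psi^{(m)}$. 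Rigorously justifying this block decomposition --- for instance by expanding the expectation as a series over the number of reinfections with ordered jump times and resumming, or via a Mecke/Campbell identity for $Q$ --- is the step I expect to be the main obstacle, since the intensity of $A^{(m)}$ depends on its own past through both the age and the freshly drawn susceptibility.

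With the two identities in hand, the remaining assertions are routine. For the converse direction, suppose $(x,y)$ solves \eqref{eqlimit} and set $m:=y$. Viewed as an equation in $x$ for this fixed $y$, the first equation of \eqref{eqlimit} is a linear Volterra equation of the second kind with a bounded kernel (recall $\gamma\le1$ and $y$ is bounded on compacts by the a priori bound $y(t)\le e^{\lambda^\ast t}$ from the preceding Proposition), hence uniquely solvable; since both $x$ and $\Theta^{(y)}$ solve it, $x=\Theta^{(y)}$, and then the second equation together with the identity for $\Psi^{(m)}$ gives $y=\Psi^{(y)}$, so $m=y$ is a fixed point. This establishes the stated equivalence. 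Finally, existence and uniqueness of $m^\ast$ follow from the preceding Proposition: the unique solution $(\bar{\mathfrak{S}},\bar{\mathfrak{F}})$ of \eqref{eqlimit} yields the fixed point $m^\ast=\bar{\mathfrak{F}}\in\bD$ with $\Theta^{(m^\ast)}=\bar{\mathfrak{S}}$, while any two fixed points $m_1,m_2$ produce two solutions $(\Theta^{(m_1)},m_1)$ and $(\Theta^{(m_2)},m_2)$ of \eqref{eqlimit}, forcing $m_1=m_2$ by uniqueness.
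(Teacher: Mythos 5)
Your proposal is correct and follows essentially the same route as the paper: both derive the two renewal identities for $\Psi^{(m)}$ and $\Theta^{(m)}$ by decomposing over the jump times of $A^{(m)}$ (the paper sums over the events $\{A^{(m)}(t)=i\}$, which is the same as your conditioning on the last reinfection time), use the support condition on $(\lambda,\gamma)$ together with the compensator $\E[\Upsilon^{(m)}(s)]=m(s)\Theta^{(m)}(s)$ to collapse the $\lambda$-terms, and then read off the equivalence with \eqref{eqlimit} and invoke the existence/uniqueness Proposition for $m^\ast$. Your extra remark that $x=\Theta^{(y)}$ follows from uniqueness of the linear Volterra equation in $x$ makes explicit a step the paper leaves implicit.
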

\begin{proof}
Let us denote by $\{\tau^{(m)}_i, i\ge1\}$ the successive jump times of $A^{(m)}$. We have
\begin{align*}
\Psi^{(m)}(t)&=\E\left[\lambda_{A^{(m)}(t)}(\sigma^{(m)}(t))\right]\\
&=\E\left[\lambda_0(t)+\sum_{i=1}^{A^{(m)}(t)}\lambda_i(t-\tau^{(m)}_i)\right]\\
&=\bar{I}(0)+\bar{\lambda}^0(t)+\E\left[\int_0^t\bar{\lambda}(t-s)dA^{(m)}(s)\right]\\
&=\bar{I}(0)+\bar{\lambda}^0(t)+\int_0^t\bar{\lambda}(t-s)m(s)\Theta^{(m)}(s)ds
\end{align*}
Moreover 
\begin{align*}
\Theta^{(m)}(t)&=\E\left[\gamma_0(t){\bf1}_{A^{(m)}(t)=0}\right]+
\sum_{i\ge1}\E\left[\gamma_i(t-\tau^{(m)}_i){\bf1}_{\tau^{(m)}_i\le t}{\bf1}_{A^{(m)}(t)=i}\right]\\
&=\E\left[\gamma_0(t)\exp\left(-\int_0^t\gamma_0(r)m(r)dr\right)\right] \\
& \qquad +\sum_{i\ge1}\E\left[\gamma_i(t-\tau^{(m)}_i){\bf1}_{\tau^{(m)}_i\le t}
\exp\left(-\int_{\tau^{(m)}_i}^t\gamma_i(r-\tau^{(m)}_i)m(r)dr\right)\right]\\
&=\E\left[\gamma_0(t)\exp\left(-\int_0^t\gamma_0(r)m(r)dr\right)\right] \\
& \qquad +\int_0^t\int_D\gamma(t-s)\exp\left(-\int_s^t\gamma(r-s)m(r)dr\right)\mu(d\gamma)m(s)\Theta^{(m)}(s)ds\,.
\end{align*}
Let $(x,y)$ denote the unique solution of  \eqref{eqlimit}. Let us choose $m=y$. Then comparing the last identity to the first equation in \eqref{eqlimit}, we deduce that $\Theta^{(y)}=x$, and comparing the previous identity to the second equation of \eqref{eqlimit}, we deduce that $\Psi^{(y)}=y$. Conversely, if $\Psi^{(m)}=m$, we have that 
$(\Theta^{(m)},m)$ solves \eqref{eqlimit}, hence the result.
\end{proof}

We now make again use of the same sequence of independent PRMs $\{Q_k,\ k\ge1\}$, and for each $k\ge1$, we let $A_k$  be the $A^{(m^\ast)}$ associated to $Q_k$. More explicitly, we define for each $k\ge1$, 
\begin{equation}\label{eq:k}
\left\{
\begin{aligned}
A_k(t)&=\int_0^t\int_0^\infty{\bf1}_{u\le\Upsilon_k(s^-)}Q_k(ds,du),\\
\Upsilon_k(t)&=\gamma_{A_k(t)}(\sigma_k(t))\times \bar{\mathfrak{F}}(t)\,,
\end{aligned}
\right.
\end{equation}
where 
\[ \sigma_k(t):=t-\sup\{s\in[0,t],\ A_k(s)=A_k(s^-)+1\}\vee0\,.\]
Note that it follows from Lemma~\ref{le:m} that for each $k\ge1$, $\bar{\mathfrak{F}}(t)=\E\left[\lambda_{A_k(t)}(\sigma_k(t))\right]$, hence $\bar{\mathfrak{F}}(t)\le\lambda^\ast$.

The next step in the proof is the following Lemma.

\begin{lemma}\label{le:Nk-k}
For any $k\ge1$ and $T>0$, we have
\begin{equation*}
 \E\left[\sup_{0\le t\le T}|A^N_k(t)-A_k(t)|\right]\le \E\int_0^T|\Upsilon^N_k(t)-\Upsilon_k(t)|dt\le\frac{\lambda^\ast}{\sqrt{N}}T\exp(2\lambda^\ast T)\,.
 \end{equation*}
\end{lemma}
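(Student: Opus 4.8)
The plan is to establish the two inequalities separately: the first by a coupling identity exploiting that $A^N_k$ and $A_k$ are driven by the \emph{same} Poisson random measure $Q_k$, and the second by a Gronwall argument on $I(t):=\E\int_0^t|\Upsilon^N_k(s)-\Upsilon_k(s)|\,ds$.

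For the first inequality, I would write the difference of the two counting processes as a single integral against $Q_k$,
\[ A^N_k(t)-A_k(t)=\int_0^t\int_0^\infty\big({\bf1}_{u\le\Upsilon^N_k(s^-)}-{\bf1}_{u\le\Upsilon_k(s^-)}\big)Q_k(ds,du). \]
The path $t\mapsto A^N_k(t)-A_k(t)$ starts at $0$ and jumps by $\pm1$ exactly at the atoms $(s,u)$ of $Q_k$ with $\Upsilon^N_k(s^-)\wedge\Upsilon_k(s^-)<u\le\Upsilon^N_k(s^-)\vee\Upsilon_k(s^-)$, so its running supremum is dominated by the total number of such ``disagreement atoms'' on $[0,T]$. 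Since the mean measure of $Q_k$ is $ds\,du$ and the integrand is predictable (left-continuous and adapted), the compensation formula gives
\[ \E\Big[\sup_{0\le t\le T}|A^N_k(t)-A_k(t)|\Big]\le\E\int_0^T\!\!\int_0^\infty{\bf1}_{\Upsilon^N_k(s)\wedge\Upsilon_k(s)<u\le\Upsilon^N_k(s)\vee\Upsilon_k(s)}\,du\,ds=\E\int_0^T|\Upsilon^N_k(s)-\Upsilon_k(s)|\,ds, \]
which is the first inequality (and, applied up to time $t$, shows $\E[\sup_{r\le t}|A^N_k(r)-A_k(r)|]\le I(t)$).

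For the second inequality I would use $\Upsilon^N_k=\gamma_{k,A^N_k}(\sigma^N_k)\,\bar{\mathfrak{F}}^N$ and $\Upsilon_k=\gamma_{k,A_k}(\sigma_k)\,\bar{\mathfrak{F}}$, together with the bounds $\gamma_{k,\cdot}(\cdot)\in[0,1]$ and $\bar{\mathfrak{F}}\le\lambda^\ast$, to split
\[ |\Upsilon^N_k(s)-\Upsilon_k(s)|\le|\bar{\mathfrak{F}}^N(s)-\bar{\mathfrak{F}}(s)|+\lambda^\ast\big|\gamma_{k,A^N_k(s)}(\sigma^N_k(s))-\gamma_{k,A_k(s)}(\sigma_k(s))\big|. \]
The key observation is that, since $A^N_k$ and $A_k$ are built from the same $Q_k$ and the same functions $(\lambda_{k,i},\gamma_{k,i})_i$, they coincide on $[0,s]$ up to their first disagreement atom; on the event that $A^N_k(\cdot)=A_k(\cdot)$ throughout $[0,s]$ the ages of infection agree and the last difference vanishes. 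Hence that ``decoupling'' indicator is dominated by $\sup_{r\le s}|A^N_k(r)-A_k(r)|$, with expectation at most $I(s)$. For the mean-field term I introduce $\tilde{\mathfrak{F}}^N(s):=N^{-1}\sum_{\ell=1}^N\lambda_{\ell,A_\ell(s)}(\sigma_\ell(s))$ and decompose $\bar{\mathfrak{F}}^N-\bar{\mathfrak{F}}=(\bar{\mathfrak{F}}^N-\tilde{\mathfrak{F}}^N)+(\tilde{\mathfrak{F}}^N-\bar{\mathfrak{F}})$. The first (coupling) term is an average of $N$ differences, each dominated as above, so by exchangeability in $\ell$ its expectation is at most $\lambda^\ast I(s)$. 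The second (fluctuation) term is an average of i.i.d.\ variables $\lambda_{\ell,A_\ell(s)}(\sigma_\ell(s))\in[0,\lambda^\ast]$ whose common mean is $\bar{\mathfrak{F}}(s)=\E[\lambda_{k,A_k(s)}(\sigma_k(s))]$ by Lemma~\ref{le:m}, so that $\E|\tilde{\mathfrak{F}}^N(s)-\bar{\mathfrak{F}}(s)|\le(\Var/N)^{1/2}\le\lambda^\ast/(2\sqrt N)$. Collecting these bounds yields
\[ \E|\Upsilon^N_k(s)-\Upsilon_k(s)|\le 2\lambda^\ast I(s)+\frac{\lambda^\ast}{2\sqrt N}, \]
hence $I(t)\le 2\lambda^\ast\int_0^t I(s)\,ds+\lambda^\ast t/(2\sqrt N)$, and Gronwall's lemma gives $I(T)\le\frac{\lambda^\ast}{2\sqrt N}\,T\,e^{2\lambda^\ast T}\le\frac{\lambda^\ast}{\sqrt N}\,T\,e^{2\lambda^\ast T}$.

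The main obstacle is the mean-field term $\bar{\mathfrak{F}}^N-\bar{\mathfrak{F}}$: one must cleanly separate the genuinely statistical $1/\sqrt N$ fluctuation (the propagation-of-chaos ingredient, where the fixed-point identity $\Psi^{(m^\ast)}=m^\ast$ of Lemma~\ref{le:m} is used to identify the mean) from the path-coupling error, and check that the latter is controlled by the very quantity $I(s)$ one is bounding, so that Gronwall closes the loop. The exchangeability of the pairs $(Q_\ell,(\lambda_{\ell,i},\gamma_{\ell,i})_i)$ is what makes the per-coordinate estimate uniform in $\ell$ and keeps the constant independent of $N$.
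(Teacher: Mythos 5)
Your proof is correct and follows essentially the same route as the paper's: the same coupling of $A^N_k$ and $A_k$ through the common Poisson random measure $Q_k$, the same decomposition of $\E|\Upsilon^N_k(t)-\Upsilon_k(t)|$ into a path-disagreement term (controlled by $\P(\sup_{r\le t}|A^N_k(r)-A_k(r)|\ge 1)\le I(t)$ via exchangeability) plus a $\lambda^\ast/\sqrt{N}$ i.i.d.\ fluctuation term identified through Lemma~\ref{le:m}, closed by Gronwall. The only differences are cosmetic: you spell out the first inequality via the compensation formula where the paper calls it obvious, and you split the product by the triangle inequality rather than by conditioning on the agreement event.
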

\begin{proof}
The first inequality is rather obvious. We now establish the second inequality. We will use repeatedly the fact that the r.v. $\sup_{0\le r\le t}|A^N_k(r)-A_k(r)|$ is either $0$ or else $\ge1$.
First note that
\begin{align*}
\E\left[|\Upsilon^N_k(t)-\Upsilon_k(t)|\right]&\le
\E\left[|\Upsilon^N_k(t)-\Upsilon_k(t)|{\bf1}_{A^N_k(t)=A_k(t), \sigma^N_k(t)=\sigma_k(t)}\right]
+\lambda^\ast\E\left(\sup_{0\le r\le t}|A^N_k(r)-A_k(r)|\right). 
\end{align*}
The first term on the right is bounded by
\begin{align*}
\E&\left[\left|\frac{1}{N}\sum_{j=1}^N(\lambda_{j,A^N_j(t)}(\sigma^N_j(t))-\E[\lambda_{1,A_1(t)}(\sigma_1(t))])\right|\right]\\
&\le\E\left[\left|\frac{1}{N}\sum_{j=1}^N(\lambda_{j,A^N_j(t)}(\sigma^N_j(t))-\lambda_{j,A_j(t)}(\sigma_j(t))])\right|\right] \\
& \qquad +\E
\left[\left|\frac{1}{N}\sum_{j=1}^N(\lambda_{j,A_j(t)}(\sigma_j(t))-\E[\lambda_{1,A_1(t)}(\sigma_1(t))])\right|\right]
\end{align*}
By a standard computation, the second term on the right hand side is bounded by $\lambda^\ast/\sqrt{N}$, and the first term by $\lambda^\ast\E\left(\sup_{0\le r\le t}|A^N_k(r)-A_k(r)|\right)$. 
If we define $\delta_N(t):=\E\left(\sup_{0\le r\le t}|A^N_k(r)-A_k(r)|\right)$, combining the above computations yields
\[ \delta_N(T)\le\frac{\lambda^\ast}{\sqrt{N}}T+2\lambda^\ast\int_0^T \delta_N(t)dt\,.\]
The result now follows from Gronwall's Lemma.
\end{proof}

\begin{proof}[Completing the proof of Theorem~\ref{th-LLN-VI-VS}]
The remainder of the proof of Theorem~\ref{th-LLN-VI-VS} can be sketched as follows.
 The r.v. whose expectation is close to $0$ by  Lemma~\ref{le:Nk-k} is $0$ with probability close to $1$ for large $N$. It is then not hard to deduce that
both
\[ \sup_{0\le t\le T}\left|\lambda_{k,A^N_k(t)}(\sigma^N_k(t))- \lambda_{k,A_k(t)}(\sigma_k(t))\right| \
\text{ and }\sup_{0\le t\le T}\left|\gamma_{k,A^N_k(t)}(\sigma^N_k(t))- \gamma_{k,A_k(t)}(\sigma_k(t))\right|\]
tend to $0$ in probability. Since the sequence $(A_k,\sigma_k)_{k\ge1}$  is i.i.d., the result essentially follows from the law of large numbers in $D$, see \cite{rao1963law}.
\end{proof}

\begin{remark}
While the random infectivity appears in the limiting LLN deterministic equations only through its mean function $\bar{\lambda}(t)$, a complicated mixed moment--exponential moment of the trajectory of the random susceptibility $\gamma(t)$ appears in the deterministic version of our varying infectivity / varying susceptibility model. This is due to the possibility of reinfection of the individuals who are experiencing a graduate loss of their immunity / gain of their susceptibility. 
\end{remark}

Using the same techniques, we can also obtain the limiting equations for the proportion of susceptible and infectious individuals.
As before, we let
\begin{align*}
	\eta_{k,i} = \sup\lbrace t \geq 0, \lambda_{k,i}(t) > 0 \rbrace,
\end{align*}
and
\begin{align*}
	F_0^c(t) = \P(\eta_{1,0} > t), && F^c(t) = \P(\eta_{1,1} > t).
\end{align*}
Then define
\begin{align*}
	I^N(t) = \sum_{k=1}^{N} {\bf 1}_{\sigma^N_k(t) < \eta_{k,A^N_k(t)}},
\end{align*}
\textit{i.e.,} the number of infectious individuals at time $ t $ (recall that $ \eta_{k,0} = 0 $ if the $ k $-th individual is initially susceptible).
Also set
\begin{align*}
	S^N(t) = \sum_{k=1}^{N} {\bf 1}_{\sigma^N_k(t) \geq \eta_{k,A^N_k(t)}} = N-I^N(t).
\end{align*}
Then, setting $ \bar{I}^N(t) = \frac{1}{N} I^N(t) $ and $ \bar{S}^N(t) = \frac{1}{N} S^N(t) $, we have the following convergence.

\begin{coro}\label{cor:proportions_VIVS}
	Under the assumptions of Theorem~\ref{th-LLN-VI-VS}, as $ N \to \infty $, $ (\bar{S}^N, \bar{I}^N) \to (\bar{S}, \bar{I}) $ in $ \bD^2 $ in probability, where
	\begin{align} \label{eqn-VIVS-IS}
		&\bar{I}(t) = \bar{I}(0) F^c_0(t) + \int_{0}^{t} F^c(t-s) \bar{\mathfrak S}(s) \bar{\mathfrak F}(s) ds, \\
		&\bar{S}(t) = \E\left[ {\bf 1}_{t \geq \eta_{0}} \exp\left( - \int_{0}^{t} \gamma_0(r) \bar{\mathfrak F}(r) dr \right) \right] + \int_{0}^{t} \E\left[ {\bf 1}_{t \geq \eta} \exp\left( - \int_{s}^{t} \gamma(r-s) \bar{\mathfrak F}(r) dr \right) \right] \bar{\mathfrak S}(s) \bar{\mathfrak F}(s) ds, \nonumber
	\end{align}
	where $ (\eta_0, \gamma_0) $ is distributed as $ (\eta_{1,0}, \gamma_{1,0}) $ and $ (\eta, \gamma) $ as $ (\eta_{1,1}, \gamma_{1,1}) $.
\end{coro}

Note that we can recover the fact that $ \bar{I}(t) + \bar{S}(t) = 1 $ for all $ t \geq 0 $ from the fact that $ \gamma(s) = 0 $ for all $ s < \eta $ (resp. $ \gamma_0(s) = 0 $ for all $ s < \eta_0 $) and the fact that the right hand side of \eqref{tot_size_det} equals 1.

\begin{proof}[Proof of Corollary~\ref{cor:proportions_VIVS}]
	This convergence follows from Lemma~\ref{le:Nk-k} in much the same way as the convergence of $ \bar{\mathfrak S} $ and $ \bar{\mathfrak F} $.
	Lemma~\ref{le:Nk-k} implies that, for any fixed $ k $,
	\begin{align*}
		\sup_{t \in [0,T]} \left| {\bf 1}_{\sigma^N_k(t) \geq \eta_{k,A^N_k(t)}} - {\bf 1}_{\sigma_k(t) \geq \eta_{k,A_k(t)}} \right|
	\end{align*}
	tends to zero in probability as $ N \to \infty $, and the convergence of $ \bar{I}^N $ follows from arguments similar to those in the proof of Theorem \ref{th-LLN-VI-VS}.
	The convergence of $ \bar{S}^N $ then follows from the fact that $ \bar{S}^N(t) = 1-\bar{I}^N(t) $ and $ \bar{S}(t) = 1-\bar{I}(t) $.
\end{proof}

\begin{remark}
The novelty in this approach is the construction of a sequence of i.i.d. processes to invoke the law of large numbers for processes in $\bD$, by using the solution of a MacKean-Vlasov type Poisson-driven stochastic equation (as in the propagation of chaos theory).
This approach has been adapted to prove the LLN for the homogeneous model with varying infectivity in \cite{FPP2021-MPMG}, which requires much weaker conditions on the random infectivity functions than that in \cite{FPP2020b}. It is then used to prove a LLN for the multi-patch multi-group epidemic model with varying infectivity in \cite{FPP2021-MPMG}. 
\end{remark}

\begin{remark}
The integral equations in \eqref{eqlimit2} and  \eqref{eqn-VIVS-IS} can be related to the PDE models as first proposed in
Kermack and McKendrick \cite{KMK32}, when a particular set of initial conditions is used and
the varying infectivity and susceptibility random functions are deterministic functions of the time since infection and the random duration of the infectious period.  
We refer the readers to Section~5 of \cite{FPPZ2021} for this detailed discussion for the SIRS model. 
\end{remark}

The paper \cite{FPPZ2021} also contains results on the endemic equilibrium of the model \eqref{eqlimit2}. Those results hold under additional assumptions on the pairs $(\lambda_{k,i},\gamma_{k,i})$, which we do not detail. Define
$\gamma_\ast:=\lim_{t\to\infty}\gamma(t)$. If $R_0\le \E\left[\frac{1}{\gamma_\ast}\right]$, then the disease free equilibrium is the only equilibrium and it is stable.
If however $R_0> \E\left[\frac{1}{\gamma_\ast}\right]$, then there is a unique endemic equilibrium $(\bar{\mathfrak{S}}_\ast, \bar{\mathfrak{F}}_\ast)$, which can be specified as follows: $\bar{\mathfrak{S}}_\ast=1/R_0$, and $\bar{\mathfrak{F}}_\ast$ is the only positive solution $x$ of the equation
\[ \int_0^\infty\E\left[\exp\left(-\int_0^s\gamma\left(\frac{r}{x}\right)dr\right)\right]ds=R_0\,.\]
We have not been able to show  that this endemic equilibrium is stable, but we have shown that under some severe assumptions the disease free equilibrium is unstable whenever $R_0> \E\left[\frac{1}{\gamma_\ast}\right]$.

%
%
%

\section{Other types of models and open problems}\label{sec:open}

\subsection{Non--homogeneous models}
All the models presented so far are homogeneous, in the sense that whenever one infectious individual meets someone else, anyone in the population has the same chance to be met, and to be possibly infected if he/she was susceptible (with the exception of  the multipatch model of Section~\ref{multipatch} and of the last section, where the susceptibility of the various individuals plays a role in that choice, but it does not contradict the homogeneity). There are many reasons why this is not realistic, and we shall indicate several attempts to correct the homogeneous model, and make the epidemic models more realistic. However, the reader should realize that too complicated models may not be really useful, among other reasons because they involve too many parameters, which might not be easy to estimate.

A first complexification of the above models is to distribute the population into age groups. This is quite reasonable concerning the Covid -- 19 epidemic, since the proportion of severe cases and deaths among those who catch the disease depends very much upon the age. This is not too difficult to implement, provided one can exploit informations about the contact rates between those age groups,  which might be found in the sociology literature. 
Also, in several countries data concerning the numbers of hospitalized patients, those in intensive care units, and those who die, are available by age class. Note however that the health condition of the patients (and their weight) is almost as important as their age.

It has been recently pointed out, see \cite{BBT-Science}, that the heterogeneity in levels of social activity between individuals has an impact on the herd immunity. Indeed, those who have a higher rate of social contacts have more chances to get infected towards the beginning of the epidemic, and they will infect more people than those with a lower rate of social contacts. Once most of the socially very active individuals have been infected and become immune, one may think that the progress of the epidemic might slow down. For that reason, an epidemic which infects a certain percentage of the population will be more efficient towards building herd immunity than vaccinating the same proportion of individuals. 
This raises also questions concerning a vaccination campaign. Should one vaccinate first those more at risk, but who have few social contacts and do not contribute much to the propagation of the epidemic, or rather those who have much social contacts?

The reason why  reality is not homogeneous has to do with the fact that each individual has frequent contact with those  in his close environment (those who share the same household and workplace), and much less frequent contact with people who are met in public transportation, shops, various social activities. There has been a lot of effort to adapt the epidemic models to such situation with various levels of contact rates. 
\cite{ballsirl} contains a recent review of the works in that direction. See also \cite{FP2021} for a mean field model approach to household epidemic models.

%
%

\subsection{Spatial models}
A good reason for non homogeneity is the spatial dispersion of the population. There is of course a strong motivation for studying epidemics models for population distributed in discrete or continuous space. There is a  quite significant body of literature on deterministic models in those two situations, see in particular \cite{ABLN2007}, \cite{ABLN2008}, \cite{rass2003spatial} and  reviews in Chapter 15 of \cite{martcheva2015} and Chapter 14 of \cite{BCF-2019}. 
Recently, some authors have proved law of large numbers and central limit theorems for Markov epidemic models  in continuous space, see in particular \cite{nzipardouxyeo} and \cite{bowongemakouapardoux}. Concerning non--Markov models, we have recently studied such a model in discrete space, see Section \ref{multipatch}. 

There is also an extensive literature on epidemic models on random graphs, including various limit theorems and asymptotic results with large population, large graphs, dense graphs/graphons, hypergraphs and various network topologies,  see, e.g., \cite{barbour1990epidemics,andersson1998limit,decreusefond2012large,ball2014epidemics,janson2014law,miller2014epidemics,janson2017near,fransson2019sir,van2010critical,keliger2022local,aurell2022finite,bodo2016sis,higham2021epidemics}, and also the recent survey \cite{tran}.  As far as we know, most of these works are for Markovian models. It would be interesting to investigate non-Markovian epidemic transmissions on random graphs.

\subsection{Control problems}

Optimal control problems in the Markovian and limiting ODE epidemic models have been studied extensively in the literature. Isolation, vaccination and immunization strategies have been developed to minimize the epidemic size and costs associated with the implementation of them. 
For example, optimal isolation strategy to minimize the total number of infected individuals \cite{abakuks1973optimal} or to minimize the total infectious  burden over an outbreak together with a cost for implementing the control in \cite{wickwire1975optimal,morton1974optimal},  optimal vaccination strategy \cite{abakuks1974optimal,wickwire1975optimal,morton1974optimal}, optimal immunization strategy \cite{wickwire1979optimal},   optimal combined isolation-vaccination strategy with resource constraints \cite{hansen2011optimal}, and optimal control to minimize the total number of infectious and the time needed for the infection to go extinct \cite{bolzoni2014react, bolzoni2019optimal}. To cope with Covid-19 pandemic, various lockdown, social distancing, testing and vaccination strategies have been implemented by governments. Some studies have been conducted of their effect, see, e.g., \cite{djidjou2020optimal,elie2020contact,xu2021control}. Incentives for individuals to participate in the mitigation process are also studied from the game theory perspective, see, e.g., \cite{hubert2020incentives, aurell2020optimal,kordonis2021dynamic}. 
It would be interesting to study how robust these control strategies are to the Markovian assumption, in particular, when the infectious periods are assumed to have a general distribution rather than exponential.

\subsection{Open problems}

There is clearly a need for more work on non--Markovian spatial epidemic models, both in discrete and continuous space. Also, endemic situations should be studied in the varying infectivity / varying susceptibility situation which we have exposed in section \ref{sec:varinfect-varimmun}. Then the study of large and moderate deviations from the limiting deterministic LLN model opens new questions. We expect to address these questions in future work. 

\section{Appendix}\label{sec:appendix}
\subsection{Poisson processes and Poisson Random Measures}\label{subsec:Poisson}
A standard Poisson process $P(t)$ is a counting process (a process which counts a number of events which has happened during the interval $[0,t]$), which is such that $P(0)=0$, $P$ has independent increments\footnote{This means that for any $n\ge1$, any $0=t_0<t_1<\cdots<t_n$, 
$P(t_1),P(t_2)-P(t_1),\ldots,P(t_n)-P(t_{n-1})$ are independent. } and for any $0\le s<t$, 
the law of $P(t)-P(s)$ is Poi$(t-s)$. Equivalently, for any $t\ge0$, the time after $t$ until the next event is independent of what happened before $t$ and its law is $\mathcal{E}\text{xp}(1)$. If $P(t)$ is a standard Poisson process and $\lambda>0$, $P(\lambda t)$ is a rate $\lambda$ Poisson process (\textit{i.e.,} for $s<t$, $P_{inf}(t)-P_{inf}(s)\simeq\text{Poi}(\lambda(t-s))$, the waiting time until the next event after $t$ is $\mathcal{E}\text{xp}(\lambda)$. More generally, for a deterministic function $\lambda(t)$, $P\left(\int_0^t\lambda(s)ds\right)$ is a rate $\lambda(t)$ Poisson process. 

A Poisson Random Measure (abbreviated PRM) $Q$ on a measurable set $E$ with mean measure $\mu$ is a  sum of Dirac measures at random points, which is such that the number of those points in disjoint subsets are independent, and for any measurable set $A$, $Q(A)\simeq\text{Poi}(\mu(A))$. A PRM $Q$ on a subset of $\R^d$ will be called standard if its mean measure is the Lebesgue measure. Note that what we have called above a standard Poisson process is the distribution function of a standard PRM on $\R_+$.

It is not very hard to show that if $\lambda(t)$ is a measurable locally bounded $\R_+$--valued function, then the two processes
\[ P\left(\int_0^t\lambda(s)ds\right)\quad\text{and }\int_0^t\int_0^\infty{\bf1}_{u\le\lambda(s)}Q(ds,du),\]
where $P$ is a standard Poisson process and $Q$ a standard Poisson random measure on $\R_+^2$, have the same law (i.e., the same finite dimensional distributions).

\subsection{Brownian motion and space--time white noise}\label{sec:BM-WN} 
A standard Brownian motion $\{B(t),\ t\ge0\}$  is a Gaussian process with continuous paths and independent increments, and such that for any $t\ge0$, $B(t)\sim N(0,t)$, i.e., $B(t)$ is a Gaussian r.v. with mean $0$ and variance $t$. A non standard Brownian motion could have a non zero mean, and a different variance.

We use in the statement of Theorem \ref{thm-FCLT-SIR} the notion of a white noise on $\R^2$. A standard white noise $W$ on $\R^2$ is a generalized Gaussian process $\{W(f),\ f\in L^2(\R^2)\}$ whose law is specified by the fact that $f\mapsto W(f)$ is linear, and $W(f)\sim N(0,\| f\|^2_{L^2(\R^2)}$. Equivalently, for any Borel subset 
$A\subset\R^2$ with finite Lebesgue measure, $W(A):=W({\bf1}_A)\sim N(0,\text{Leb}(A))$. A non standard white noise on $\R^2$ is associated with a measure $\mu$ on $\R^2_+$, such that $W(A)\sim N(0,\mu(A))$. The law of $W(A)$ is specified, provided $\mu(A)<\infty$.

\subsection{The space $\bD$}\label{subsec:D}
In this paper, we denote by $\bD:=\bD([0,+\infty))$ the space of functions from $[0,+\infty)$ into 
$\R$ which are right continuous and possess a left limit at any time $t>0$. Such a function is said to be c\`al\`ag, an acronym for {\it continu \`a droite et limit\'e \`a gauche}. If $x\in \bD$, whenever $t_n\to t$, with 
$t_n\ge t$ for any $n\ge1$, $x(t_n)\to x(t)$, and we shall write $x(t^-)$ for the value of $\lim_n x(t_n)$, whenever $t_n<t$ for all $n\ge1$. It is not convenient to equip $\bD$ with the supnorm topology, since we want that after a small modification of the time of a jump, the resulting function be close to the original one.  

A sequence converges in $\bD$ iff it converges in $\bD([0,T])$ for all $T>0$. It then suffices to discuss the convergence in $\bD([0,T])$. A distance on $\bD([0,T])$ can be defined as follows. Let $\Lambda_T$ denote the set of continuous strictly increasing functions from $[0,T]$ into itself, which map $0$ into $0$ and $T$ into $T$. If $\|\cdot\|_T$ denotes the supnorm on $[0,T]$ and $I$ the identity mapping, a possible choice for the distance is
\[ d(x,y)=\inf_{\lambda\in\Lambda}\{\|\lambda-I\|_T\vee\|x-y\circ\lambda\|_T\}\,.\]
The associated topology is sometimes called the Skorokhod $J_1$ topology. That distance makes $\bD([0,T])$ separable. If we want $\bD([0,T])$ to be complete, we better use a slightly different distance, whose definition
is given by replacing $\|\lambda-I\|_T$ by $\sup_{0\le s<t\le T}\left|\log\frac{\lambda(t)-\lambda(s)}{t-s}\right|$.

It is crucial for us to have conditions under which a sequence of stochastic processes with  trajectories in $\bD$ is tight, which implies that such a sequence has a subsequence which converges weakly for the topology of $\bD$.
Let us formulate the celebrated Aldous tightness criterion. A sufficient condition for a sequence $X^n$ of random elements of $\bD$ to be tight is that the two following conditions are satisfied:
\begin{enumerate}
\item For any $T>0$, $\limsup_{n}\P\left(\|X^n\|_T\ge a\right)\to0$, as $a\to\infty$.
\item For any $\varepsilon$, $\eta$, $T>0$, there exists $\delta_0>0$ and $n_0$ such that if $\delta\le\delta_0$ and $n\ge n_0$, for any discrete $X^n$--stopping time $\tau\le T$, 
$ \P\left(|X^n(\tau+\delta)-X^n(\tau)|\ge\varepsilon\right)\le\eta$.
\end{enumerate} 
A proof of this criterion can be found e.g. on pages 178-179 of \cite{billingsley}.

In the case a-of a semi--martingale, we have a very simple criterion to verify Aldous's condition.   The following is Proposition 37 in \cite{pardoux2016probabilistic}:
\begin{prop}\label{pro:tightD}
Let $X^n$ be a sequence of semimartingales of the form
\begin{align*}
X^n(t)&= X^N_0+\int_0^t\varphi^n(s)ds+M^n(t),\ \text{ and}\\
\langle M^n\rangle_t&=\int_0^t \psi^n(s)ds,
\end{align*}
where $M^n(t)$ is a martingale, and $\langle M^n\rangle_t$ its associated predictable increasing process (i.e.,
$\langle M^n\rangle_t$ is predictable and $|M^n(t)|^2-\langle M^n\rangle_t$ is a martingale). 

If both $\{X^n_0\}$, and $\{\sup_{0\le t\le T}(|\varphi^n(t)|+\psi^n(t))\}$ are tight for all $T>0$, then $X^n$ is tight in $\bD$.
\end{prop}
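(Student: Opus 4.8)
The plan is to verify the two conditions of the Aldous tightness criterion stated just above, namely the compact-containment bound (1) and the oscillation bound (2), by combining the elementary decomposition of $X^n$ with a localization argument that turns the \emph{tightness} of $\Gamma^n_T := \sup_{0\le t\le T}(|\varphi^n(t)| + \psi^n(t))$ into usable $L^2$ estimates on the martingale part. Writing $X^n(t) = X^n_0 + \int_0^t\varphi^n(s)\,ds + M^n(t)$, we have $\|X^n\|_T \le |X^n_0| + T\,\Gamma^n_T + \sup_{0\le t\le T}|M^n(t)|$, and the first two terms on the right are tight by hypothesis, so everything reduces to controlling the running supremum of $M^n$.

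The key device is the stopping time $\sigma^n_K := \inf\{t\ge 0:\ |\varphi^n(t)| + \psi^n(t) \ge K\}$. On the event $\{\Gamma^n_T < K\}$ one has $\sigma^n_K > T$, so $M^n$ agrees with its stopped version $M^n(\cdot\wedge\sigma^n_K)$ on $[0,T]$, while $\langle M^n(\cdot\wedge\sigma^n_K)\rangle_T = \int_0^{T\wedge\sigma^n_K}\psi^n(s)\,ds \le TK$ by construction. For condition (1), Doob's $L^2$ maximal inequality gives $\E[\sup_{0\le t\le T}|M^n(t\wedge\sigma^n_K)|^2] \le 4TK$, whence by Chebyshev $\P(\sup_{0\le t\le T}|M^n(t)|\ge a) \le \P(\Gamma^n_T \ge K) + 4TK/a^2$. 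Given $\epsilon>0$, tightness of $\{\Gamma^n_T\}$ lets me fix $K$ with $\sup_n\P(\Gamma^n_T\ge K) < \epsilon$, and then $\limsup_n\P(\sup_t|M^n(t)|\ge a) \le \epsilon$ as $a\to\infty$; combined with the two tight terms this yields (1).

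For condition (2), let $\tau\le T$ be a stopping time and $\delta\le 1$. From $|X^n(\tau+\delta) - X^n(\tau)| \le \int_\tau^{\tau+\delta}|\varphi^n(s)|\,ds + |M^n(\tau+\delta) - M^n(\tau)|$, on $\{\Gamma^n_{T+1} < K\}$ the drift increment is bounded by $\delta K$. For the martingale increment I stop at $\sigma^n_K$ again: since $N^n:=M^n(\cdot\wedge\sigma^n_K)$ satisfies $\langle N^n\rangle_{T+1}\le (T+1)K$, it is $L^2$-bounded on $[0,T+1]$, and optional sampling of the martingales $N^n$ and $(N^n)^2-\langle N^n\rangle$ at the bounded times $\tau\le\tau+\delta\le T+1$ gives $\E[(N^n_{\tau+\delta} - N^n_{\tau})^2] = \E[\int_\tau^{\tau+\delta}\psi^n(s)\mathbf 1_{s\le\sigma^n_K}\,ds] \le \delta K$. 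Chebyshev bounds the probability of a large martingale increment by $4\delta K/\varepsilon^2$, and collecting terms yields $\P(|X^n(\tau+\delta) - X^n(\tau)|\ge\varepsilon) \le \P(\Gamma^n_{T+1}\ge K) + \mathbf 1_{\{\delta K\ge\varepsilon/2\}} + 4\delta K/\varepsilon^2$. Choosing $K$ with $\sup_n\P(\Gamma^n_{T+1}\ge K) < \eta/2$ and then $\delta_0$ small enough makes the right-hand side $\le\eta$ for $\delta\le\delta_0$, giving (2).

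The main obstacle is precisely that tightness of $\{\Gamma^n_T\}$ is strictly weaker than a uniform moment bound, so Doob's inequality cannot be applied directly to $M^n$; the localization by $\sigma^n_K$ is exactly what bridges this gap, trading a small-probability error term $\P(\Gamma^n_T\ge K)$ (controlled by tightness, uniformly in $n$) for a deterministic quadratic-variation bound on the stopped martingale. The only delicate technical point is the validity of the $L^2$ optional-sampling identity on the random interval $[\tau,\tau+\delta]$, which is justified by the uniform $L^2$-boundedness of $N^n = M^n(\cdot\wedge\sigma^n_K)$ on $[0,T+1]$.
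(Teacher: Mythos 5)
Your proof is correct and follows essentially the same route as the argument the paper relies on (it gives no proof itself but defers to Proposition 37 of \cite{pardoux2016probabilistic}): verify Aldous's criterion, bound the drift increment by $\delta\,\sup_{t\le T}|\varphi^n(t)|$, and control the martingale part by localizing at $\sigma^n_K$ so that tightness of $\sup_{t\le T}(|\varphi^n(t)|+\psi^n(t))$ turns into a deterministic bound on the stopped quadratic variation, which is in effect a proof of Lenglart's domination inequality in this setting. The only implicit point worth recording is that $\sigma^n_K$ is a stopping time, which requires $\varphi^n,\psi^n$ to be progressively measurable --- automatic in the intended applications.
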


Tightness of semimartingales is usually not too hard to establish. With Markov processes are always associated martingales. However, with our non--Markov processes, we do not necessary have martingales to help us. This is 
why more delicate techniques are involved in the proofs for the non--Markov processes.
 
	\bibliographystyle{plain}
	\bibliography{Epidemic-Survey}

\end{document}